\documentclass[a4paper,11pt]{article}
\title{Non-degeneracy and Fibering with Applications to Diophantine Approximation}
\author{ZiAn Cody ZHAO}
\date{\today} 
\usepackage{graphicx}
\usepackage{float}
\usepackage{dsfont}
\usepackage{framed}
\usepackage{amsmath}
\usepackage{amssymb}
\usepackage{amsfonts}
\usepackage{mathrsfs}
\usepackage{array}
\usepackage{hyperref}
\usepackage{hologo} 
\usepackage{tikz}

\usepackage{graphicx}
\allowdisplaybreaks
\usepackage[numbers]{natbib}
\usepackage{cite}

\usepackage{url}

\usepackage{amsthm}



\theoremstyle{plain} 
\newtheorem{thm}{Theorem}[section] 
\newtheorem{lem}[thm]{Lemma} 
\newtheorem{prop}[thm]{Proposition}

\theoremstyle{definition}

\theoremstyle{remark} 
\newtheorem*{rem}{Remark}

\begin{document}
\maketitle 
\begin{abstract}
Motivated by applications in Diophantine approximation, we develop a new fibering mechanism for non-degenerate manifolds in $\mathbb{R}^n$, showing that they admit local decompositions into families of non-degenerate curves with uniform quantitative non-degeneracy properties. The manifolds are required to be $C^N$ with the essentially optimal bound $N\geq \max\lbrace n,l+1\rbrace$, where the manifold is $l$ non-degenerate. Furthermore, we show that this uniform quantitative local control can be globalised into a qualitative statement, so that almost every point of a non-degenerate manifold lies on a non-degenerate curve contained in the manifold. Our approach is perturbative and shows that the fibering structure arising in the analytic setting is stable under sufficiently small $C^N$ perturbations. The method provides a framework for extending fibering based results in Diophantine approximation from curves to manifolds. Such applications are considered in the paper.
\end{abstract}
\tableofcontents
\section{Introduction}
One of the central themes in Diophantine approximation is to determine the Hausdorff dimension and measure of sets of points on manifolds satisfying prescribed Diophantine properties \citep{Beresnevich_2015,beresnevich2020winningpropertybadlyapproximable,beresnevich2026twisteddiophantineapproximationmanifolds,chow2026rationalpointshyperbolicregions,beresnevich2026weighteddiophantineapproximationmanifolds}. A common strategy is to reduce the problem from manifolds to curves via a fibering construction. Once the desired statement has been established for curves, one can lift it back to the original manifold using geometric arguments such as Fubini's theorem or Marstrand’s slicing lemma \citep[Theorem 10.11]{Mattila_1995}.

The scope of this approach is largely determined by the available fibering theory. Existing fibering constructions require the manifold to be either analytic, see \citep[pp.~9--10]{Sprindzhuk1980ACHIEVEMENTSAP} and \citep[Appendix C]{Beresnevich_2015}, or differentiable to a relatively high order, see \citep[Lemma A.1]{chow2026rationalpointshyperbolicregions}. In contrast, our theorem establishes a new fibering mechanism under an essentially optimal smoothness requirement. At the local level, we obtain uniform quantitative control of non-degeneracy, providing a lower bound for the Wronskian that holds simultaneously for all fibered curves and all sufficiently small $C^N$ perturbations, making the construction particularly suited to dimension theoretic applications. This uniform quantitative control can then be globalised into a qualitative statement showing that almost every point of a non-degenerate manifold lies on a non-degenerate curve, providing the corresponding fibering mechanism for measure theoretic applications. The effective nature of this quantitative control may also provide a form of geometric rigidity under perturbation needed to develop higher-dimensional analogues of weak non-degeneracy \citep{BERESNEVICH20103064}.

In \citep{Pyartli1969}, Pyartli formulated non-degeneracy through fibering into non-degenerate curves; our result shows that every $l$ non-degenerate manifold in $\mathbb{R}^n$, with $C^{\max\{n,l+1\}}$ regularity, admits such a fibering.

The paper is organised as follows. In Section~1, we introduce the necessary definitions and state our main results. In Section~2, we establish our new fibering results for non-degenerate manifolds. In Section~3, we apply these results to badly approximable points and multiplicative Diophantine approximation on manifolds, and establish new results in twisted Diophantine approximation on manifolds. 
\subsection{Notation}
For a multi-index \(\alpha=(\alpha_1,\ldots,\alpha_m)\in\mathbb Z_{\geq0}^m\) and \(x=(x_1,\ldots,x_m)\in\mathbb R^m\), let
\[
|\alpha|
=
\alpha_1+\cdots+\alpha_m,
\quad
x^\alpha
:=
x_1^{\alpha_1}\cdots x_m^{\alpha_m},
\]
and define
\[
\partial_\alpha
:=
\frac{\partial^{|\alpha|}}
{\partial x_1^{\alpha_1}\cdots\partial x_m^{\alpha_m}}.
\]
The symbol \( |\cdot| \) denotes the absolute value of a real number, the Euclidean norm of a vector, and the length of a multi-index; the intended meaning will always be clear from the context. Let \(B\subset\mathbb R^m\) be a ball and denote its closure by \(\overline B\). For a map \(f:\overline B\to\mathbb R^n\), we write
\[
\|f\|_{C^N(\overline B)}
=
\max_{|\alpha|\le N}
\sup_{x\in\overline B}
\vert\partial_\alpha f(x)\vert.
\]
The symbol \(\mathbb{R}^\times\) denotes the set of non-zero real numbers, so that \((\mathbb{R}^\times)^m\) consists of vectors in \(\mathbb{R}^m\) with all coordinates non-zero. 

For $C^n$ functions $f_1,\dots,f_n:I\to\mathbb R$, define the Wronskian by
\[
W(f_1',\dots,f_n')(t)
=
\det\!
\begin{pmatrix}
f_1'(t) & \cdots & f_n'(t)\\
\vdots & \ddots & \vdots\\
f_1^{(n)}(t) & \cdots & f_n^{(n)}(t)
\end{pmatrix}.
\]
\subsection{Non-degeneracy and analytic fibering}
The notion of non-degeneracy, introduced in \citep{kleinbock1998flowshomogeneousspacesdiophantine} for $C^l$ smooth manifolds $\mathcal{M}\subset\mathbb{R}^n$, for some $l\in\mathbb{N}$, is essentially that the manifold is sufficiently curved so as not to lie locally in any affine hyperplane. Formally, an $m$-dimensional manifold $\mathcal{M}$ embedded in $\mathbb{R}^n$ is said to be non-degenerate if almost every point $x_0$ of $\mathcal{M}$ admits a neighbourhood that is parametrised by an $l$ non-degenerate map $f:U\subset\mathbb{R}^m\to\mathbb{R}^n$ for some $l\in\mathbb{N}$, where $U$ is an open subset of $\mathbb{R}^m$ and $x_0\in f(U)$. The map $f:U\to\mathbb{R}^n$, $x\mapsto f(x)=(f_1(x),\dots,f_n(x))$, is said to be $l$ non-degenerate at $x\in U$ if $f$ is $l$ times continuously differentiable on some sufficiently small ball centred at $x$ and
\begin{align}\label{2}
\operatorname{span}\{\partial_\alpha f(x):\,1\le|\alpha|\le l\}=\mathbb{R}^n.
\end{align}
The map $f$ is said to be non-degenerate if it is non-degenerate at almost every point in $U$ with respect to $m$-dimensional Lebesgue measure. 

In the case of curves, where $m=1$, a map $f:I\subset\mathbb{R}\to\mathbb{R}^n$ is said to be Wronskian non-degenerate at $x_0\in I$ if $f$ is $n$ non-degenerate at a point $x_0$. Equivalently,
\[
W(f_1',\dots,f_n')(x_0)\neq 0.
\]

For curves, Wronskian non-degeneracy is stronger than $l$ non-degeneracy, as it requires the first $n$ derivatives to span $\mathbb{R}^n$, without the need for higher-order derivatives. This is one of the features that makes the main fibering result of this paper particularly noteworthy, as it provides a mechanism for fibering an $l$ non-degenerate manifold into Wronskian non-degenerate curves. In this sense, our fibering theorem establishes a bridge between these two notions of non-degeneracy.

On the other hand, Wronskian non-degeneracy is not only a cleaner notion for curves, but also the one appearing in several fundamental results in Diophantine approximation, such as the theorem of Bernik, Kleinbock and Margulis \citep[Theorem 1.4]{bernik2001khintchine}. Moreover, by \citep[Proposition 2.13]{beresnevich2023khintchine}, if $f$ is $l$ non-degenerate at $x_0$, then $f$ is Wronskian non-degenerate outside a countable set in some neighbourhood of $x_0$. Consequently, although $l$ non-degeneracy is the natural notion for manifolds, Wronskian non-degeneracy reflects the generic behaviour of curves. This is one reason why our main fibering theorem is formulated in terms of Wronskian non-degenerate fibers.

Both Wronskian non-degeneracy (for curves) and $l$ non-degeneracy (for general manifolds) imply that the functions $1,f_1,\dots,f_n$ are linearly independent. The converse, however, need not hold in general. In the analytic setting, these notions are closely related: for analytic maps on a connected domain, the linear independence of $1,f_1,\dots,f_n$ is equivalent to finite order non-degeneracy at every point, and, for analytic curves, to the Wronskian not vanishing identically; see, for example, \citep[Theorem 1]{Bostan_2010} and \citep[pp.~91--92]{4ade5a02-1dbf-3a2a-872f-9a5afb3d63a1}. Since these results are well known, we omit the proofs.

We now recall the classical analytic fibering lemma, which provides a mechanism for reducing the non-degeneracy of analytic maps to that of curves; see \citep[pp.~9--10]{Sprindzhuk1980ACHIEVEMENTSAP} and \citep[Appendix C]{Beresnevich_2015}.
\begin{lem}[Fibering for analytic manifolds]\label{lem2}
Let $f_0,\dots,f_n$ be analytic functions in $m$ real variables defined on an open neighbourhood of $0$. Assume that $f_0,\dots,f_n$ are linearly independent over $\mathbb{R}$. Then there is a sufficiently large integer $d_0>1$ such that for every $d>d_0$ and every $u=(u_1,u_2,\dots,u_m)\in \mathbb{R}^m$ with $u_1\cdots u_m\neq 0$, the following functions of real variable
\[
\phi_{u,i}:E_u\mapsto \mathbb{R}\quad (0\leq i\leq n)
\]
given by
\[
\phi_{u,i}(t):=f_i(u_1t^{1+d^m},u_2t^{d+d^m},\dots,u_mt^{d^{m-1}+d^m}),
\]
where $E_u\subset \mathbb{R}$ is a neighbourhood of $0$, are linearly independent over $\mathbb{R}$.
\end{lem}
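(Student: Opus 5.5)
The approach is to reduce linear independence of the restricted functions to a statement about monomials in Taylor expansions. Expand each $f_i$ at $0$ as a convergent power series $f_i(x)=\sum_\alpha a_{i,\alpha}x^\alpha$. Substituting $x_j=u_jt^{d^{j-1}+d^m}$ gives
\[
\phi_{u,i}(t)=\sum_\alpha a_{i,\alpha}u^\alpha t^{e_d(\alpha)},\qquad e_d(\alpha)=|\alpha|d^m+\sum_{j=1}^m\alpha_j d^{j-1}.
\]
This is a convergent power series in $t$ near $0$, since each $|x_j|\le |u_j||t|^{d^m}$ is small. Suppose $\sum_i c_i\phi_{u,i}\equiv0$ on $E_u$. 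Then all coefficients of the resulting power series vanish. The goal is to choose $d_0$ so that this forces $c=0$.

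\textbf{Step 1: a finite set of monomials suffices.} Since $f_0,\dots,f_n$ are linearly independent analytic functions, their coefficient vectors $(a_{i,\alpha})_\alpha$ are linearly independent. Hence there is a finite set $S$ of multi-indices with $|\alpha|\le K$ such that the $(n+1)\times|S|$ matrix $(a_{i,\alpha})_{i,\alpha\in S}$ has rank $n+1$. Take $d_0> K$.

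\textbf{Step 2: the substitution separates monomials.} For $d>d_0\ge K$, the map $\alpha\mapsto e_d(\alpha)$ is injective on all multi-indices. To see this, write $e_d(\alpha)=|\alpha|d^m+\sum_j\alpha_jd^{j-1}$.
\begin{itemize}
\item If $|\alpha|<|\beta|$, then $e_d(\alpha)<e_d(\beta)$, because $\sum_j\alpha_jd^{j-1}\le |\alpha|d^{m-1}<d^m$ whenever $|\alpha|<d$.
\item If $|\alpha|\ge d$, this comparison needs care; that case is handled below.
\item When $|\alpha|=|\beta|$, the base-$d$ expansion $\sum_j\alpha_jd^{j-1}$ with digits $\alpha_j\le|\alpha|<d$ determines $\alpha$ uniquely.
\end{itemize}

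\textbf{Step 3: handling large multi-indices.} Injectivity may fail for $|\alpha|\ge d$. We only need it to hold for monomials with exponent up to $M=\max_{\alpha\in S}e_d(\alpha)$. Any $\beta$ with $e_d(\beta)\le M$ satisfies $|\beta|d^m\le e_d(\beta)\le M< (K+1)d^m$, so $|\beta|\le K<d$. Within the range $|\beta|\le K$ the digit argument of Step 2 applies. So on exponents $\le M$, each power $t^e$ arises from at most one $\alpha$.

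\textbf{Step 4: conclude.} The coefficient of $t^{e_d(\alpha)}$ in $\sum_ic_i\phi_{u,i}$ is therefore exactly $u^\alpha\sum_ic_ia_{i,\alpha}$ for every $\alpha\in S$. Since $u_1\cdots u_m\ne0$, we have $u^\alpha\neq0$, so $\sum_ic_ia_{i,\alpha}=0$ for all $\alpha\in S$. The rank condition from Step 1 then gives $c=0$.

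The main obstacle is the bookkeeping in Steps 2–3: checking that the exponent weight $|\alpha|d^m$ dominates the digit part $\sum_j\alpha_jd^{j-1}$, so that no infinite tail of higher-degree monomials collides with a monomial from $S$. The bound $|\beta|\le K<d$ for all relevant $\beta$ resolves this. The rest is routine power series uniqueness.
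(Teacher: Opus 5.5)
Your proof is correct and follows essentially the standard route the paper relies on: Steps~2--3 reprove, for $d>K$, the separation property of Lemma~\ref{lem4} (injectivity of $e_d$ on $\{|\alpha|\le K\}$, and disjointness of $e_d(\{|\alpha|\le K\})$ from the exponents of all larger multi-indices), which is exactly the mechanism the paper uses to rule out cancellation in the proof of Lemma~\ref{lem5}. Two small corrections. First, delete the opening claim of Step~2 that $e_d$ is injective on all multi-indices: it is false for $m\ge2$, since $e_d(d+d^m,0,\dots,0)=(1+d^m)(d+d^m)=e_d(0,1+d^m,0,\dots,0)$, and you never use it. Second, Step~1 invokes the identity theorem, so the neighbourhood of $0$ must be taken connected, or linear independence read as germs at $0$; otherwise the statement itself fails.
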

This naturally raises the question of whether there exists a more general fibering mechanism capable of reducing finite smoothness non-degenerate manifolds to Wronskian non-degenerate curves, and what the minimum smoothness requirement for such a mechanism should be. The remainder of this paper is devoted to developing precisely such a fibering mechanism.
\subsection{Main results}
The key idea is to exploit the stability of suitable non-degeneracy data under sufficiently small perturbations. Starting from a $C^N$ $l$ non-degenerate manifold, we first identify a finite collection of pivot derivatives encoding its non-degeneracy. Since these derivatives vary continuously under $C^N$ perturbations, the corresponding pivot determinant remains uniformly bounded away from zero throughout a sufficiently small neighbourhood of the original map. This pivot structure then provides the quantitative information required to obtain uniform lower bounds for the Wronskian determinants of the associated fibered curves.

The main difficulty is that the analytic fibering lemma \ref{lem2} provides only a qualitative non-degeneracy statement. Although each resulting fibered curve has non-vanishing Wronskian determinant, the value of the Wronskian cannot be controlled as the perturbation varies. Consequently, this information alone is insufficient to pass to the limit as the error of perturbation tends to zero. A genuinely quantitative refinement of the analytic fibering mechanism is therefore required.

Once such uniform control is available, the density of analytic maps in the $C^N$ topology allows us to approximate the original smooth manifold by analytic perturbations, while the continuity of the Wronskian determinant ensures that the corresponding non-degeneracy properties are preserved in the limit. This new local fibering mechanism is now formalized as the following theorem.
\begin{thm}[Local fibering of smooth manifolds]\label{thmfib}
Let $B\subset\mathbb{R}^m$ be a ball centred at $0$, and let $f:\overline{B}\to \mathbb{R}^n$ be a $C^N$ map. Suppose there exist non-zero multi-indices $\alpha_1,\dots,\alpha_n$ such that
\begin{align}\label{46}
\vert \det (\partial_{\alpha_1}f(0),\dots,\partial_{\alpha_n}f(0))\vert>0.
\end{align}
Set
\begin{align*}
l=\max_{1\leq i\leq n}\vert \alpha_i\vert,
\end{align*}
and suppose
\begin{align}\label{45}
N\geq\max\lbrace n,l+1\rbrace.
\end{align}
Then there exists $d_0\in \mathbb{N}$ satisfying the following. For every compact set $U_0\subset (\mathbb{R}^\times)^m$ and every $d\geq d_0$, define
\begin{align}\label{5}
a_j=d^{j-1}+d^m,\quad \gamma_u(t)=(u_1t^{a_1},\dots,u_mt^{a_m}).
\end{align} 
There exist $c_*>0$ and \(0<t_*\leq1\) such that for every \(u\in U_0\), $\gamma_u([0,t_*])\subset B$ and
\[
\vert W((f_{1}\circ\gamma_u)',\dots,(f_{n}\circ\gamma_u)')(t_u)\vert \geq c_*,
\]
for some $t_u\in [0,t_*]$.
\end{thm}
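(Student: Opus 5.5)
The plan is to argue directly from a Taylor expansion of $f$ at $0$ to order $l$. Composing with $\gamma_u$ turns each monomial into a single power of $t$, and the choice of exponents $a_j$ makes these powers distinct. We then show that the Wronskian of the fibered curve has a leading power $t^{E}$ whose coefficient is bounded away from zero uniformly in $u\in U_0$, while everything else is of strictly higher order in $t$.

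\textbf{Step 1 (polynomial part).} Write $f(x)=P(x)+R(x)$ with $P(x)=\sum_{|\alpha|\le l}\partial_\alpha f(0)x^\alpha/\alpha!$. Then
\[
f(\gamma_u(t))=\sum_{|\alpha|\le l}\frac{\partial_\alpha f(0)}{\alpha!}u^\alpha t^{e(\alpha)}+R(\gamma_u(t)),\qquad e(\alpha)=|\alpha|d^m+\sum_j\alpha_jd^{j-1}.
\]
For $d>l$ the map $\alpha\mapsto e(\alpha)$ is injective on $\{|\alpha|\le l\}$, since the $\alpha_j$ are the base-$d$ digits of $e(\alpha)-|\alpha|d^m$. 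The constant term $\alpha=0$ disappears after differentiation.

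By \eqref{46}, the vectors $\{\partial_\alpha f(0):1\le|\alpha|\le l\}$ span $\mathbb{R}^n$. Run the greedy algorithm in increasing order of $e(\alpha)$ to obtain the unique minimal-weight basis $\beta_1,\dots,\beta_n$ of this matroid (unique because the weights are distinct). Multiplying a column by the non-zero scalar $u^\alpha/\alpha!$ does not change linear independence, so this basis does not depend on $u\in(\mathbb{R}^\times)^m$.

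By multilinearity of the determinant in columns, the Wronskian of the polynomial part is
\[
\sum \det\Bigl(\tfrac{\partial_{\gamma_i}f(0)u^{\gamma_i}}{\gamma_i!}\Bigr)_{i}\,W\bigl((t^{e(\gamma_1)})',\dots,(t^{e(\gamma_n)})'\bigr),
\]
the sum running over ordered $n$-tuples $(\gamma_1,\dots,\gamma_n)$. Each monomial Wronskian equals $C(e)\,t^{\sum e(\gamma_i)-n(n+1)/2}$, where $C(e)$ is a Vandermonde-type constant that is non-zero when the $e(\gamma_i)\ge 1$ are distinct. Every other independent tuple has strictly larger exponent sum $\sum e(\gamma_i)$. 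Hence the leading term is $c(u)\,t^{E}$ with
\[
E=\sum_i e(\beta_i)-\tfrac{n(n+1)}2,\qquad c(u)=C\cdot u^{\sum\beta_i}\cdot\det\bigl(\partial_{\beta_i}f(0)/\beta_i!\bigr).
\]
Compactness of $U_0\subset(\mathbb{R}^\times)^m$ gives $|c(u)|\ge c_0>0$, together with uniform bounds on all other coefficients.

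\textbf{Step 2 (remainder).} Since $N\ge l+1$, the remainder satisfies $|\partial_\beta R(x)|\le C|x|^{l+1-|\beta|}$ for $|\beta|\le l+1$ on $\overline B$. For $k>l+1$, down to the available order $N\ge n$, we only have boundedness of $\partial_\beta R$. Using the chain rule (Faà di Bruno) with $|\gamma_u(t)|\asymp t^{a_1}$, one gets
\[
\Bigl|\tfrac{d^k}{dt^k}R(\gamma_u(t))\Bigr|\le C\,t^{(l+1)a_1-k}\qquad\text{for }k\le n\le N.
\]
This requires checking that each term of the formula carries enough powers of $t$ from the factors $\gamma_u^{(j)}$. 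We have $(l+1)a_1=(l+1)(1+d^m)$, which exceeds every $e(\alpha)$ with $|\alpha|\le l$ as soon as $d^m>l\,d^{m-1}$, i.e. $d>l$. Expanding the Wronskian multilinearly, every term containing at least one remainder column therefore has $t$-order at least $E+\delta$ for some $\delta>0$, with constants uniform in $u\in U_0$.

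\textbf{Step 3 (conclusion).} Fix $d_0>l$ and $d\ge d_0$. We have
\[
|W(t)|\ge c_0t^E-C_1t^{E+\delta}\qquad\text{for } t\in(0,t_*].
\]
Choose $t_*\le1$ so small that $C_1t_*^{\delta}\le c_0/2$ and $\gamma_u([0,t_*])\subset B$ for all $u\in U_0$. Then take $t_u=t_*$ and $c_*=c_0t_*^E/2$.

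The main obstacle is Step 2. We need derivative bounds for $R\circ\gamma_u$ up to order $n$, and the gain $t^{(l+1)a_1-k}$ must hold even when $n>l+1$, so that it beats the leading term at every derivative order. If the direct Faà di Bruno estimate turns out to be delicate there, I would fall back on the perturbative route suggested by the paper: approximate $f$ in $C^N$ by polynomials, for which Steps 1 and 3 are exact. Note that $c_0$ depends only on finitely many derivatives at $0$ and is stable under small perturbations. I would then pass to the limit using continuity of the Wronskian in the $C^n$ topology, at a fixed $t_*>0$ where $\gamma_u$ is smooth with all derivatives bounded.
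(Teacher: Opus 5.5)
Your argument is correct, and it is a more direct route than the paper's.

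\textbf{What the paper does.} It never analyses $f$ itself in the local theorem. Instead it proves the perturbative Theorem~\ref{upnf}, uniformly over analytic $f_\varepsilon$ with $\|f_\varepsilon-f\|_{C^N(\overline B)}<\varepsilon$. A perturbation can lower the pivot exponents (see \eqref{58}), so no single monomial of the polynomial Wronskian is leading uniformly. The paper therefore only obtains $\sup_{[0,t_*]}|W^P_{\varepsilon,u}|\ge c_Pt_*^{Q_{\mathrm{good}}}$, via compactness over coefficient data and a rescaling. It expands to order $N$, compares powers via $Na_1>p_n$, and returns to $f$ by Whitney approximation and $C^n$-continuity of the Wronskian.

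\textbf{What you do differently.} You stay with $f$; the paper itself notes, at the start of the global fibering subsection, that its lemmas hold with $\varepsilon=0$. You expand only to order $l$ with an $O(|x|^{l+1})$ remainder. You then use the fact that, for a single map, the greedy basis isolates an explicit leading monomial $c(u)t^E$ with $|c(u)|\ge c_0$. This buys several things:
\begin{itemize}
\item it avoids analytic approximation altogether;
\item it allows $d_0=l+1$ rather than $\max\{N,l+1\}$;
\item it proves more than stated, namely $|W_u(t)|\ge\tfrac{c_0}{2}t^E$ for \emph{every} $t\in(0,t_*]$, which would make the Markov-inequality argument of Lemma~\ref{tpro} unnecessary (the good proportion is $1$).
\end{itemize}
The uniformity over perturbations that the paper's detour is built for is also recovered at a fixed scale $t_*$. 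Your bound plus $C^n$-continuity of the Wronskian of $g\circ\gamma_u$ at $t_*$, uniform in $u\in U_0$, already gives the conclusion of Theorem~\ref{upnf}. What the compactness argument adds beyond this is only the scale-uniform bound of Proposition~\ref{prop2} for polynomial parts of perturbed maps.

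The Fa\`a di Bruno estimate you flagged as the main obstacle is harmless. A term with $|\beta|>l+1$ carries $t^{|\beta|a_1-k}$, which is smaller still, so $|(R\circ\gamma_u)^{(k)}(t)|\le Ct^{(l+1)a_1-k}$ for all $k\le n$. No fallback is needed.

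\textbf{The step to write out.} What does need spelling out is the ``therefore'' for the mixed terms. A crude count, bounding each polynomial derivative slot only by its lowest power of $t$, can fail once $n\ge3$. The argument that works is this:
\begin{itemize}
\item Expand the $n-k$ polynomial slots into monomials $\eta_i$. The resulting term vanishes unless the vectors $\partial_{\eta_i}f(0)$ are linearly independent.
\item If they are independent, complete them to a basis with $k$ elements of your greedy basis (matroid augmentation).
\item Minimality of the greedy weight then gives $\sum_i e(\eta_i)\ge\sum_j e(\beta_j)-k\,e(\beta_n)>\sum_j e(\beta_j)-k(l+1)a_1$, so the term is $O(t^{E+1})$.
\end{itemize}
This is exactly the content of the paper's \eqref{62}. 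A minor point: the Step~1 expansion should run over $n$-element sets, not ordered tuples. Ordered tuples overcount by $n!$, which is harmless.
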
 
\begin{rem}
By continuity of the Wronskian, for each $u\in U_0$ there exists an
interval $I_u\subset[0,t_*]$ containing $t_u$ such that
\[
\left\vert
W((f_1\circ\gamma_u)',\dots,(f_n\circ\gamma_u)')(t)
\right\vert
\geq \frac{c_*}{2}
\]
for all $t\in I_u$. Hence the curve
\[
\phi_u(t):=f(\gamma_u(t))
\]
is Wronskian non-degenerate at every point of $I_u$. In particular, one may choose
$s_u\in I_u\cap(0,t_*)$ at which $\phi_u$ is Wronskian non-degenerate.
\end{rem}
\begin{rem}
The smoothness requirement differs from the natural lower bound $\max\{n,l\}$ by only one derivative. Indeed, the condition $N\ge n$ is unavoidable since the Wronskian involves derivatives up to order $n$, while $N\ge l$ is necessary for the notion of $l$ non-degeneracy itself. Moreover, within the present fibering framework, the additional derivative cannot in general be removed. In this sense, the smoothness requirement is essentially optimal.
\end{rem}
\begin{rem}
Suppose, in addition, that \(f\) is a local parametrisation of an
\(m\)-dimensional manifold. Then \(Df(0)\) has rank \(m\). Consequently,
\(m\) independent directions of non-degeneracy are already supplied by
the first derivatives, so only the remaining \(n-m\) directions need to
be witnessed by higher-order derivatives. This observation immediately yields several simplifications of the smoothness requirement \eqref{45}. In particular, if \(l=2\) and \(m<n\), then $m\geq 2$ implies $n\ge3$, and hence
\[
\max\{n,l+1\}
=
\max\{n,3\}
=
n.
\]
Thus the smoothness requirement reduces to $N\ge n$. Furthermore, in the hypersurface case $n-m=1$, the smoothness requirement can be further reduced to $N\ge\max\{n,l\}$; see Proposition~\ref{prop:hypersurface}.
\end{rem}
There remains, however, a distinction between local and global fibering. Theorem~\ref{thmfib} guarantees a Wronskian non-degenerate segment along each fiber, but not that this segment contains the point from which the construction is initiated. We overcome this by exploiting the quantitative uniformity of the construction: at every sufficiently small scale, the Wronskian non-degenerate portions of the fibers collectively occupy a fixed positive proportion of a suitable neighbourhood. Such regions cannot repeatedly avoid a set of positive measure, since at a density point such a set occupies asymptotically full measure in sufficiently small neighbourhoods. This leads to the following global fibering theorem.
\begin{thm}[Global fibering of smooth manifolds]\label{thm:globalfib}
Let $V\subset\mathbb{R}^m$ be a bounded non-empty open set and let
$f:V\to\mathbb{R}^n$ be a $C^N$ parametrisation of an
$m$-dimensional manifold $\mathcal{M}=f(V)$. Suppose that $f$ is
$l$ non-degenerate at almost every point of $V$ and that
\[
N\geq\max\{n,l+1\}.
\]
Then, for almost every $x\in\mathcal{M}$ with respect to $m$-dimensional Hausdorff measure on $\mathcal{M}$, there exist an interval
$I\subset\mathbb{R}$, a $C^N$ curve
\[
\phi:I\to\mathcal{M},
\]
and $t_x\in I$ such that
\[
\phi(t_x)=x
\quad
\text{and}
\quad
W(\phi_1',\dots,\phi_n')(t_x)\neq0.
\]
In particular, almost every point of $\mathcal{M}$ lies on a
Wronskian non-degenerate curve contained in $\mathcal{M}$.
\end{thm}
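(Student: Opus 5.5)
The plan is to deduce Theorem~\ref{thm:globalfib} from Theorem~\ref{thmfib} by a Lebesgue density argument, following the heuristic sketched before the statement. Since $f$ is a $C^N$ parametrisation (so $Df$ has rank $m$), the image under $f$ of a null set in $V$ is $\mathcal{H}^m$-null in $\mathcal{M}$, and $f$ preserves positive measure. It therefore suffices to show that the set
\[
E=\{x_0\in V:\ \text{no } C^N \text{ curve in } V \text{ through } x_0 \text{ has } f\circ\text{curve Wronskian non-degenerate at } x_0\}
\]
has Lebesgue measure zero. We argue by contradiction and suppose $|E|>0$. Removing a null set, we may assume every point of $E$ is a point of $l$ non-degeneracy of $f$. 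Covering by countably many pieces on which fixed multi-indices $\alpha_1,\dots,\alpha_n$ with $|\alpha_i|\le l$ realise \eqref{46}, with the determinant bounded below by some $\delta>0$, we may further assume this holds uniformly on $E$. Fix a density point $x_0$ of $E$.

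The key step is to apply Theorem~\ref{thmfib} not to $f$ at $x_0$ alone, but to the translated and rescaled maps
\[
g_{y,r}(x)=f(y+rx)
\]
for $y$ near $x_0$ and small $r>0$. This is where the uniformity is needed. By the remark that $d_0,c_*,t_*$ come from a pivot determinant stable under small $C^N$ perturbations, I would first upgrade Theorem~\ref{thmfib} to the statement that $d_0,c_*,t_*$ can be chosen uniformly over a $C^N$-neighbourhood of $f$ (translated to $x_0$). This uniformity is the main obstacle: one has to check that the proof's constants depend only on the lower bound for the pivot determinant and on the $C^N$ norm. Scaling is handled by an affine normalisation. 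Since $\partial_\alpha g_{y,r}=r^{|\alpha|}\partial_\alpha f(y+r\cdot)$, I would compose with the diagonal matrix $\operatorname{diag}(r^{-|\alpha_i|})$ in a pivot-adapted basis. Because the Wronskian transforms by a constant determinant under linear changes of the target, Wronskian non-degeneracy is unaffected.

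Fix $U_0=[1,2]^m$ and $d\ge d_0$. The curves $y+r\gamma_u(t)$, for $u\in U_0$ and $t\in I_u$, have non-vanishing Wronskian for $f\circ(\cdot)$. The map
\[
(u,t)\mapsto r\gamma_u(t)
\]
is a local diffeomorphism for $t>0$ (its Jacobian is $r^m\prod t^{a_j}\cdot\sum a_j\ne0$ in suitable coordinates). The set
\[
G_r=\{y+r\gamma_u(t):u\in U_0,\ t\in I_u\}
\]
therefore has measure at least $\kappa r^m$ with $\kappa>0$ independent of $r$, because the lengths of the $I_u$ are bounded below uniformly by continuity of the Wronskian with uniform $C^{N}$ bounds (here $N\ge n$ and $N\ge l+1$ give equicontinuity of the Wronskian, which explains the extra derivative). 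Every point $z\in G_r$ lies on the curve $t\mapsto y+r\gamma_u(t)$, which is Wronskian non-degenerate at $z$, so $z\notin E$. Moreover $G_r$ is contained in a ball of radius $Cr$ about $y=x_0$. Hence
\[
|E\cap B(x_0,Cr)|\le |B(x_0,Cr)|-\kappa r^m,
\]
so the density of $E$ at $x_0$ is at most $1-\kappa/(C^m|B_1|)<1$, contradicting the choice of $x_0$. Thus $|E|=0$. Pushing forward by $f$, the curves $\phi=f\circ(x_0+r\gamma_u)$ are $C^N$, lie in $\mathcal{M}$, and have non-vanishing Wronskian at the required points, which gives the conclusion for almost every $x\in\mathcal{M}$.
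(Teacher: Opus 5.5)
Your globalisation takes a genuinely different route from the paper's, and it can be made to work, but the step that carries all the weight is not correctly justified as written.

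\textbf{The two routes.} The paper keeps the map and base point fixed and shrinks $t_*$. The good region $\Gamma(G(t_*))$ then has measure $\asymp t_*^{a_1+\cdots+a_m}$. That is comparable to the anisotropic box $R(t_*)$ but negligible inside a ball of radius $t_*^{a_1}$. This is why the paper needs three things: a scale-invariant good-$t$ proportion (Lemma~\ref{tpro}, via Markov's inequality and the $t_*^{Q_{\mathrm{good}}}$ bound of Proposition~\ref{prop2}); the area formula (Lemma~\ref{ppro}); and the Jessen--Marcinkiewicz--Zygmund strong density theorem. You instead fix $t_*$ and blow up isotropically. The good set is then $x_0+rS_r$ with $|S_r|$ bounded below, so the ordinary density theorem for balls suffices, and compactness replaces Markov. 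That is a legitimate and more elementary mechanism. However, $x_0+r\gamma_u(t)=x_0+\gamma_{u'}(r^{1/a_1}t)$ with $u'_j=r^{1-a_j/a_1}u_j\to\infty$ for $j\ge2$. So Theorem~\ref{thmfib} for a fixed compact $U_0$ says nothing about your curves, and everything rests on uniformity in $r$.

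\textbf{What the uniform-in-$r$ step needs.}

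(i) Take $h_r(x):=D_r^{-1}M^{-1}\bigl(f(x_0+rx)-f(x_0)\bigr)$, with $M=(\partial_{\alpha_1}f(x_0),\dots,\partial_{\alpha_n}f(x_0))$ and $D_r=\operatorname{diag}(r^{|\alpha_i|})$. These maps stay bounded in $C^N$ only if the $\alpha_i$ are flag-adapted: for every $k$, the $\partial_{\alpha_i}f(x_0)$ with $|\alpha_i|\le k$ must span all $\partial_\alpha f(x_0)$ with $1\le|\alpha|\le k$. Otherwise they blow up. For example, $f(x,y)=(x,y+y^2)$ with $\alpha_1=(1,0)$, $\alpha_2=(0,2)$ satisfies \eqref{46}, yet $h_r(x,y)=(x,\,y/(2r)+y^2/2)$. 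The greedy pivots of Lemma~\ref{lemp} are flag-adapted, because $e_d$ orders first by $|\alpha|$. With them, Taylor's formula gives $h_r\to h_0$ in $C^N(\overline B)$. Here the $i$-th coordinate of $h_0$ is $\sum_{|\beta|=|\alpha_i|}(M^{-1}\partial_\beta f(x_0))_i\,x^\beta/\beta!$, and $\partial_{\alpha_i}h_r(0)=\partial_{\alpha_i}h_0(0)=e_i$.

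(ii) The uniformity you need is then Theorem~\ref{upnf} applied at the polynomial $h_0$, not at $f$, since the $h_r$ do not approach $f$. Follow it with Whitney approximation and passage to the limit exactly as in the proof of Theorem~\ref{thmfib}. Your alternative, that the constants depend only on the $C^N$ norm and the pivot lower bound, does not follow from inspecting the proof. Proposition~\ref{prop4} compares against the pivot exponents of a fixed reference map, and pivots can drop under perturbation (cf.~\eqref{58}). So the remainder is dominated only for perturbations small relative to $t_*^{Q_{\mathrm{good}}}$, and you do not need that class statement anyway.

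(iii) Bounded $C^N$ norms do not give a uniform lower bound on the length of the good $t$-sets when $N=n$, because the Wronskian contains $n$-th derivatives. The bound instead follows from joint continuity of $(r,u,t)\mapsto W(h_r\circ\gamma_u)(t)$ on the compact set $[0,r_0]\times U_0\times[0,t_*]$, which uses $h_r\to h_0$ in $C^N$. The hypothesis $N\ge l+1$ plays no role here; it enters only through Theorem~\ref{upnf}.

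(iv) With $U_0=[1,2]^m$, the map $(u,t)\mapsto r\gamma_u(t)$ goes from $m+1$ to $m$ dimensions, since $\gamma_u(\lambda t)=\gamma_{(\lambda^{a_j}u_j)_j}(t)$, so it is not a local diffeomorphism. Restrict to the slice $u_1=1$ as in Lemma~\ref{ppro}. There the Jacobian is $r^m a_1 t^{a_1+\cdots+a_m-1}$, which gives $|G_r|\ge\kappa r^m$.

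(v) You never check that $E$ is measurable. Use the density theorem for outer measure, or the paper's analytic-set argument.

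With these repairs your argument is complete. It uses only the perturbative stability of Theorem~\ref{upnf} at a blow-up limit, whereas the paper's uses the explicit $t_*^{Q_{\mathrm{good}}}$ scaling.
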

\begin{rem}
Theorem~\ref{thm:globalfib} in the above form is not immediately
applicable as written, since it does not provide a parametrised family of non-degenerate curves.
For the sake of a clean statement, however, we state the result in this form. There is nevertheless a quite clean way to use the construction underlying the proof of Theorem~\ref{thm:globalfib} and combine it with a density argument to obtain measure theoretic statements; see the proof of Theorem~\ref{thmcgp}.
\end{rem}

The quantitative control and perturbative stability described at the beginning of this section are established in the following theorem, which constitutes the main technical result of this paper and implies Theorem \ref{thmfib}.
\begin{thm}[Uniform Perturbative Non-degenerate Fibering]\label{upnf}
Let $B\subset \mathbb{R}^m$ be a ball centred at $0$, and let
$f:\overline{B}\to \mathbb{R}^n$ be a $C^N$ map. Suppose there exist
non-zero multi-indices $\alpha_1,\dots,\alpha_n$ such that \eqref{46} holds. Let $l$ and $N$ be as in Theorem \ref{thmfib}. Then there exists $d_0\in\mathbb N$ such that, for every compact set $U_0\subset (\mathbb{R}^\times)^m$ and every \(d\ge d_0\), there exist $\varepsilon_1>0$, \(0<t_*\le1\), and \(c_*>0\) for which the following holds. Let $\gamma_u$ be defined by \eqref{5}; then $\gamma_u([0,t_*])\subset B$ for every $u\in U_0$. Let
$0<\varepsilon\leq\varepsilon_1$, and let
$f_\varepsilon:\overline{B}\to\mathbb{R}^n$
be an analytic map satisfying
\[
\|f_\varepsilon-f\|_{C^N(\overline{B})}<\varepsilon.
\]
Then, for every $(\varepsilon,u)\in(0,\varepsilon_1]\times U_0$, there exists
\[
t_{\varepsilon,u}\in[0,t_*]
\]
such that
\[
\bigl|
W((f_{\varepsilon,1}\circ\gamma_u)',
\dots,
(f_{\varepsilon,n}\circ\gamma_u)')
(t_{\varepsilon,u})
\bigr|
\geq 2c_*,
\]
uniformly for $0<\varepsilon\leq\varepsilon_1$ and
$u\in U_0$. Consequently, for every $(\varepsilon,u)$, there exists an interval $I_{\varepsilon,u}\subset [0,t_*]$ containing $t_{\varepsilon,u}$ such that
\[
\bigl|
W((f_{\varepsilon,1}\circ\gamma_u)',
\dots,
(f_{\varepsilon,n}\circ\gamma_u)')
(t)
\bigr|
\geq c_*
\]
for all $t\in I_{\varepsilon,u}$, uniformly for
$0<\varepsilon\leq\varepsilon_1$ and $u\in U_0$.
\end{thm}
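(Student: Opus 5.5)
The plan is to prove a quantitative generalized-Vandermonde estimate along the curves $\gamma_u$, with the bound taken at one fixed time $t_{\varepsilon,u}=t_*$. First, choose $t_*$ small so that $\gamma_u([0,t_*])\subset B$ for all $u\in U_0$; this is possible since $U_0$ is compact and $a_j\ge 1$. Fix $d_0>l$, possibly enlarged later. For $d\ge d_0$, write
\[
e(\alpha):=\langle a,\alpha\rangle=|\alpha|d^m+\sum_j\alpha_jd^{j-1}.
\]
Since $d>l$, the base-$d$ digits show that $\alpha\mapsto e(\alpha)$ is injective on $\{|\alpha|\le l\}$, and $e(\alpha)<(l+1)d^m$ for all such $\alpha$.

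Next, Taylor expand $f_\varepsilon$ at $0$ to order $l$, so that $f_\varepsilon=P_\varepsilon+R_\varepsilon$ with $P_\varepsilon(x)=\sum_{|\alpha|\le l}\partial_\alpha f_\varepsilon(0)x^\alpha/\alpha!$. Since $N\ge l+1$ and $\|f_\varepsilon\|_{C^N}\le\|f\|_{C^N}+1$, we have $|\partial_\beta R_\varepsilon(x)|\le C|x|^{l+1-|\beta|}$ for $|\beta|\le l+1$. For $l+1<|\beta|\le N$, the bound $|\partial_\beta R_\varepsilon|\le C$ holds uniformly in $\varepsilon$. Then $P_\varepsilon\circ\gamma_u(t)=\sum_\alpha c_\alpha(\varepsilon,u)t^{e(\alpha)}$ with $c_\alpha=\partial_\alpha f_\varepsilon(0)u^\alpha/\alpha!\in\mathbb R^n$.

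For the remainder, apply the chain rule (Faà di Bruno) together with $|\gamma_{u,j}^{(i)}(t)|\lesssim t^{a_j-i}$. This should give $|(R_\varepsilon\circ\gamma_u)^{(k)}(t)|\le C t^{(l+1)d^m-k}$ for $k\le n\le N$. The idea is that each factor $\partial_\beta R_\varepsilon$ loses at most what the $|\beta|$ factors of size $\gtrsim t^{d^m}$ supply. This is the only place where $N\ge\max\{n,l+1\}$ is used. Expanding the Wronskian multilinearly in columns, with Cauchy--Binet for the polynomial part, gives
\[
W(t)=\sum_{T}\det(c_{\alpha})_{\alpha\in T}\,V_T\,t^{E_T-n(n+1)/2}+\mathcal E(t).
\]
Here $T$ runs over $n$-sets of multi-indices with $|\alpha|\le l$, $E_T=\sum_{\alpha\in T}e(\alpha)$, and $V_T=\prod(e(\beta)-e(\alpha))\neq0$ is a generalized Vandermonde constant. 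The error $\mathcal E$ collects every term containing a remainder column. Each such column gains a factor at least $t^{(l+1)d^m-\max e(\alpha)}\ge t^{d^m-ld^{m-1}-\dots}$, which is a positive power of $t$.

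Now choose the reference set $S$ as the set minimizing $E_T$ among those $T$ with $\det(\partial_\alpha f(0))_{\alpha\in T}\ne0$. Such a set exists by \eqref{46}. Since $\alpha\mapsto e(\alpha)$ is injective, the weights are distinct, so a minimum-weight basis of the matroid of columns of this linear map is unique. Hence $S$ is the greedy basis and the minimum is attained only at $S$. The coefficient of $S$ is $\prod_{\alpha\in S}u^\alpha/\alpha!$ times a fixed nonzero determinant, plus $O(\varepsilon)$. Since $U_0\subset(\mathbb R^\times)^m$ is compact, its modulus is $\ge 4c_0>0$ uniformly in $u$ once $\varepsilon$ is small. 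Divide $W(t)$ by $t^{E_S-n(n+1)/2}$:
\begin{itemize}
\item terms with $E_T>E_S$ and the error $\mathcal E$ are $O(t^{\delta})$ for some $\delta>0$, uniformly in $(\varepsilon,u)$;
\item terms with $E_T<E_S$ have coefficients vanishing at $\varepsilon=0$, so they are $O(\varepsilon\, t^{-K})$ for a fixed $K$.
\end{itemize}
Fix $t_*$ so small that the first group contributes at most $c_0$ in this normalization. Then take $\varepsilon_1$ so small that the second group contributes at most $c_0$ at $t=t_*$. This yields $|W(t_*)|\ge 2c_0\,t_*^{E_S-n(n+1)/2}=:2c_*$. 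The interval $I_{\varepsilon,u}$ then comes from the equicontinuity of $t\mapsto W(t)$. This equicontinuity holds because the $n$-th derivatives of $f_\varepsilon\circ\gamma_u$ share a modulus of continuity inherited from $f$, up to an error $2\varepsilon$.

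The main obstacle is the uniformity in $\varepsilon$. Lower-weight minors that vanish for $f$ may be nonzero but tiny for $f_\varepsilon$, and they dominate as $t\to0$. This is why the order of the choices matters: $d$, then $t_*$, then $\varepsilon_1$. It is also why the lower bound must be taken at a fixed positive time rather than asymptotically at $0$. I would also check carefully the Faà di Bruno remainder bound at the borderline regularity $N=l+1<n$, where the derivatives of order between $l+1$ and $n$ are only bounded.
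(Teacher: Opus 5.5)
\textbf{Gap: the mixed remainder terms.} Your overall plan is workable: Taylor expansion to order $l$, the unique minimum-weight basis $S$ in place of the greedy pivots, evaluation at the single time $t=t_*$, and choosing $\varepsilon_1$ after $t_*$. The step that fails as written is your treatment of the remainder-containing terms $\mathcal E$. You assert that, after division by $t^{E_S-n(n+1)/2}$, all of $\mathcal E$ is $O(t^\delta)$ uniformly in $(\varepsilon,u)$, because each remainder column gains $t^{(l+1)d^m-\max e(\alpha)}$. Consider a mixed term with $k\ge1$ remainder rows and polynomial rows indexed by distinct $\alpha_i$ (over the complementary $n-k$ rows). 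It is bounded by a constant times $\bigl|\bigwedge_i c_{\alpha_i}(\varepsilon,u)\bigr|\,t^{k(l+1)a_1+\sum_i e(\alpha_i)-n(n+1)/2}$. This exponent beats $E_S-n(n+1)/2$ only if $\sum_i e(\alpha_i)$ is large enough, and nothing in your argument stops the polynomial rows from being low-weight multi-indices that are dependent for $f$.

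\textbf{A concrete counterexample.} Take $m=2$, $n=3$, $l=2$ and $\partial_{(1,0)}f(0)=\partial_{(0,1)}f(0)=\partial_{(2,0)}f(0)=e_1$, $\partial_{(1,1)}f(0)=e_2$, $\partial_{(0,2)}f(0)=e_3$. Then $E_S=e(1,0)+e(1,1)+e(0,2)=2+3d+5d^2$. The term with one remainder row of weight $3a_1$ and polynomial rows $(1,0),(0,1)$ has total weight $3a_1+a_1+a_2=4+d+5d^2<E_S$ for $d>1$. Its coefficient is $O(\varepsilon)$ but in general non-zero, since the two polynomial rows are parallel only at $\varepsilon=0$. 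After normalisation it is therefore of order $\varepsilon\,t^{2-2d}$, not $O(t^\delta)$. This is exactly the obstruction you identified for pure polynomial minors, reappearing inside $\mathcal E$.

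\textbf{The repair.} The missing step is the content of the paper's Proposition~\ref{prop4}: split the mixed terms just as you split the pure ones. Write $c_\alpha(\varepsilon,u)=c_\alpha(0,u)+O(\varepsilon)$ in the polynomial rows, and let $p_1<\cdots<p_n$ denote the weights of $S$.
\begin{itemize}
\item The part built only from unperturbed vectors is non-zero only if the $c_{\alpha_i}(0,u)$ are linearly independent. The matroid (greedy) property then gives, after sorting, $e(\alpha_{(j)})\ge p_j$, hence $\sum_i e(\alpha_i)\ge p_1+\cdots+p_{n-k}$. The normalised exponent is at least $k(l+1)d^m-(p_{n-k+1}+\cdots+p_n)>0$, since every $p_j<(l+1)d^m$, so this part does belong to your $O(t^\delta)$ group.
\item The rest contains a row of size $O(\varepsilon)$ and only non-negative powers of $t$. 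It belongs to your $O(\varepsilon t^{-K})$ group and is absorbed by taking $\varepsilon_1$ small after $t_*$ is fixed.
\end{itemize}
In the paper this is the split into $D^0_{S,\eta}$ and $D^\delta_{S,\eta}$ together with \eqref{62}.

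\textbf{Comparison with the paper.} With this repair your proof goes through, and its polynomial part is leaner than the paper's. Evaluating at $t_*$ with $\varepsilon_1$ depending on $t_*$ removes the need for the compactness argument over coefficient data and the norm-equivalence step in Proposition~\ref{prop2}. The price is that you no longer get a polynomial lower bound uniform over all $\varepsilon\le\varepsilon_0$ independently of $t_*$.
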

\begin{rem}
In the proof, we may shrink $t_*$, if necessary, so that
\[
\gamma_u([0,t_*])\subset B
\]
for every \(u\in U_0\); this is possible since $U_0$ is compact so that $\vert \gamma_u(t)\vert\leq C_{U_0}t^{a_1}$ by \eqref{28}. The threshold $\varepsilon_1$ is chosen only after $U_0$, $d$ and $t_*$ have been fixed, and in general depends on them.
\end{rem}
We stress that the analyticity of the perturbation is not essential. It is introduced solely to facilitate the approximation argument leading to Theorem \(\ref{thmfib}\). Once the uniform quantitative lower bound has been obtained, the continuity of the Wronskian determinant in the $C^n$ topology implies that the conclusion of Theorem \(\ref{thmfib}\) is stable under arbitrary sufficiently small $C^n$ perturbations of $f$, not necessarily analytic. 
\section{Fibering of smooth non-degenerate manifolds}
\subsection{Local fibering}
The goal of this section is to develop the framework required to prove Theorem \ref{upnf}. Throughout this section, we assume the same hypotheses as in Theorem \ref{upnf}. The proof naturally splits into three main parts. In the first part, we construct suitable pivot data encoding the non-degeneracy of the original map and show that these data persist throughout a sufficiently small $C^N$ neighbourhood. In the second part, we use Taylor expansions to decompose the fibered maps into a polynomial term and a remainder term. We then analyse the polynomial contribution to the Wronskian determinant through the pivot construction, thereby obtaining a uniform quantitative lower bound. In the third part, we exploit the finite smoothness of the original manifold to show that the contribution of the polynomial term dominates that of the remainder. Combining these ingredients yields the proof of Theorem \ref{upnf}. After establishing Theorem \ref{upnf}, we show how it immediately implies the smooth fibering result stated in Theorem \ref{thmfib}.

We begin by recalling the following technical lemma from
\citep[Lemma 12]{Beresnevich_2015}, which introduces the exponent map underlying the fibering parametrisation.
\begin{lem}\label{lem4}
Let $d_0\geq 1$ and $d\geq \max\lbrace d_0,2\rbrace$ be integers and let $e_d:\mathbb{Z}_{\geq 0}^m\to\mathbb{Z}_{\geq 0}$ be given by
\begin{align}\label{12}
e_d(\alpha_1,\dots,\alpha_m):=\sum_{j=1}^m \alpha_j(d^{j-1}+d^m).
\end{align}
Let
\[
S_{d_0}:=\lbrace (\alpha_1,\dots,\alpha_m)\in \mathbb{Z}_{\geq 0}^m:\,\alpha_1+\cdots+\alpha_m\leq d_0\rbrace.
\]
Then
\begin{itemize}
\item[(i)] $e_d$ maps $S_{d_0}$ into $\mathbb{Z}_{\geq 0}$ injectively, and
\item[(ii)] $e_d(S_{d_0})\cap e_d(\mathbb{Z}_{\geq 0}^m\setminus S_{d_0})=\emptyset$.
\end{itemize}
\end{lem}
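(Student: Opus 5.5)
Both parts reduce to elementary estimates on $e_d(\alpha)=\sum_j \alpha_j d^{j-1} + |\alpha| d^m$, which splits into a ``low'' part and a ``high'' part. For $\alpha\in S_{d_0}$ every coordinate satisfies $\alpha_j\le d_0$. We will therefore arrange $\sum_j\alpha_j d^{j-1}< d^m$ whenever the coordinates are small, so that $e_d(\alpha)$ encodes $|\alpha|$ as its ``leading digit'' and $(\alpha_1,\dots,\alpha_m)$ as its base-$d$ digits.

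\emph{Part (i).} Take $\alpha,\beta\in S_{d_0}$ with $e_d(\alpha)=e_d(\beta)$. Since $d\ge d_0$, each $\alpha_j\le d_0\le d$. The base-$d$ expansion needs digits strictly less than $d$, so we first handle the boundary case $\alpha_j=d$. This case forces $d=d_0$, and then $\alpha=d_0 e_j$ with all other coordinates zero.

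A cleaner route is to bound the low part directly: $L(\alpha):=\sum_j\alpha_j d^{j-1}\le |\alpha|\, d^{m-1}\le d_0 d^{m-1}\le d^m$. We need a strict inequality. Equality holds only if $|\alpha|=d_0=d$ and $\alpha$ is concentrated on $j=m$.

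\begin{itemize}
\item If $L(\alpha),L(\beta)<d^m$, then reducing modulo $d^m$ gives $L(\alpha)=L(\beta)$ and $|\alpha|=|\beta|$.
\item Equality of $L$ gives $\alpha=\beta$ by uniqueness of base-$d$ expansions, provided the digits are less than $d$.
\item The single degenerate configuration, where $d=d_0$ and a digit equals $d$, is checked by hand. Alternatively, one can note that the statement is only ever applied with $d$ large and restrict to $d>d_0$. I expect the proof to handle this by a direct comparison of $L$ and $|\alpha|$.
\end{itemize}

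\emph{Part (ii).} Take $\alpha\in S_{d_0}$ and $\beta\notin S_{d_0}$, so $|\beta|\ge d_0+1$. Then
\[
e_d(\beta)\ge |\beta|\,d^m\ge (d_0+1)d^m .
\]
On the other hand,
\[
e_d(\alpha)=|\alpha|d^m+L(\alpha)\le d_0d^m+L(\alpha).
\]
So it suffices to show $L(\alpha)<d^m$, which is the same bound as in part (i). The borderline case $L(\alpha)=d^m$ requires $\alpha=d\,e_m$ with $d=d_0$. Then $e_d(\alpha)=d^m+d\cdot d^m=(d_0+1)d^m$. This can only equal $e_d(\beta)$ if $|\beta|=d_0+1$ and $L(\beta)=0$, which forces $\beta=0$, contradicting $|\beta|=d_0+1$.

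The same style of argument closes the degenerate case in part (i). There, $\alpha=de_m$ and $\beta\in S_{d_0}$ with $e_d(\beta)=(d+1)d^m$. Since $L(\beta)\le d^m$, this forces $|\beta|=d$ and $L(\beta)=d^m$, hence $\beta=de_m=\alpha$.

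\emph{Main obstacle.} The only delicate point is the boundary case $d=d_0$, where a base-$d$ digit can reach $d$. I would isolate it exactly as above: the inequality $L(\alpha)\le d^m$ is strict unless $\alpha=d\,e_m$, and that single configuration is resolved by direct computation.
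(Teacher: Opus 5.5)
\paragraph{The gap.} Your splitting $e_d(\alpha)=|\alpha|d^m+L(\alpha)$, with $|\alpha|$ read as a leading digit above a base-$d$ low part, is the right mechanism. The paper itself only cites \citep[Lemma~12]{Beresnevich_2015} for this statement. The same comparison does appear in the proof of Lemma~\ref{lemp}, where $d>l$ keeps all coordinates below $d$, and in \eqref{61}. Your part (ii), and your treatment of $\alpha=de_m$ in part (i), are correct.

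The gap is in the boundary case of part (i). You first note, correctly, that a coordinate equal to $d$ forces $d=d_0$ and $\alpha=de_j$ for some $j$. You then treat this as ``the single degenerate configuration'' $\alpha=de_m$, but that is only the case $L(\alpha)=d^m$.

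For $j<m$ you have $L(de_j)=d^j<d^m$, so you fall into your first bullet. Your second bullet (equal $L$ implies $\alpha=\beta$ by uniqueness of base-$d$ expansions) then fails as stated, since $L(de_j)=d^j=L(e_{j+1})$. These cases are never checked.

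They are easily closed using the other output of your first bullet, $|\alpha|=|\beta|$, which gives $|\beta|=d$. Then one of two things happens.
\begin{itemize}
\item[(a)] $\beta$ also has a coordinate equal to $d$. Then $\beta=de_k$ with $d^k=d^j$, so $\beta=\alpha$.
\item[(b)] All coordinates of $\beta$ are below $d$. Then base-$d$ uniqueness gives $\beta=e_{j+1}$, contradicting $|\beta|=d\ge 2$.
\end{itemize}

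\paragraph{Two further remarks.} First, do not fall back on $d>d_0$. That does not prove the lemma as stated, and the paper genuinely needs the boundary case. It applies the lemma on $S_N$ with $d\ge\max\{N,l+1\}$ (Propositions~\ref{prop2} and~\ref{prop4}), so $d=N$ is allowed.

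Second, you can drop the special treatment of $L(\alpha)=d^m$ by putting the strict inequality on the other side, as in \eqref{61}. If $|\alpha|<|\beta|$ and $|\alpha|\le d$, then
\[
e_d(\alpha)\le|\alpha|d^m+|\alpha|d^{m-1}\le(|\alpha|+1)d^m\le|\beta|d^m<|\beta|d^m+L(\beta)=e_d(\beta),
\]
because $L(\beta)\ge|\beta|\ge 1$. This gives (ii) immediately. It also reduces (i) to the case $|\alpha|=|\beta|$ and $L(\alpha)=L(\beta)$. There the digit check above, together with your argument for $de_m$, finishes the proof.
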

For each multi-index \(\alpha\in\mathbb Z_{\ge0}^m\), define
\[
\Lambda_\alpha
:=
\left(
\frac{\partial_\alpha f_0(0)}{\alpha!},\frac{\partial_\alpha f_1(0)}{\alpha!},
\dots,
\frac{\partial_\alpha f_n(0)}{\alpha!}
\right)
\in\mathbb R^{n+1},
\quad
f_0\equiv1.
\]
For an analytic perturbation \(f_\varepsilon\), define
\(\Lambda_\alpha(\varepsilon)\) analogously, with
\(f_{\varepsilon,0}\equiv1\).
\subsubsection{Construction and persistence of pivot data}
The first step is to replace the original non-degeneracy data by a distinguished collection of pivot derivatives. These satisfy a minimality property and will serve as the quantitative non-degeneracy information throughout the remainder of the proof.
\begin{lem}[Pivot construction]\label{lemp}
Let \(B\subset\mathbb R^m\) be a ball centred at \(0\), and let \(f:\overline B\to\mathbb R^n\) be a \(C^N\) map. Assume there exist non-zero multi-indices \(\alpha_1,\dots,\alpha_n\) such that \eqref{46} holds. Let \(l\) and \(N\) be as in Theorem~\ref{thmfib}. Then, with $d_0:=\max\lbrace N,l+1\rbrace$, there exist multi-indices
\[
\beta_0=0,\beta_1,\dots,\beta_n,
\qquad |\beta_i|\le l,
\]
such that, for every \(d\ge d_0\),
\begin{align}\label{21}
0=p_0<p_1<\cdots<p_n,
\quad
p_i=e_d(\beta_i).
\end{align}
Moreover, there exists \(c_{\mathrm{piv}}>0\) such that
\[
\left|
\det\bigl(
\partial_{\beta_1}f(0),
\dots,
\partial_{\beta_n}f(0)
\bigr)
\right|
\geq c_{\mathrm{piv}}.
\]
In particular,
\[
p_n\leq
\max_{1\leq j\leq n}e_d(\alpha_j).
\]
These pivot exponents are minimal in the following sense: if $\eta_0',\dots,\eta_n'$ are multi-indices with $\vert \eta_i'\vert\leq l$ and $\det\bigl(
\Lambda_{\eta_0'},
\dots,
\Lambda_{\eta_n'}
\bigr)\neq0$, whose exponents $q_i':=e_d(\eta_i')$ satisfy
\[
q_0'<q_1'<\cdots<q_n',
\]
then 
\[
p_i\leq q_i'
 \quad (0\leq i\leq n).
\]
\end{lem}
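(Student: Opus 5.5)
Fix $d\ge d_0:=\max\{N,l+1\}$ and work with the finite set $S_l=\{\eta:|\eta|\le l\}$. By Lemma~\ref{lem4}, with $d_0$ replaced by $l$ (allowed since $d\ge l+1$), the map $e_d$ is injective on $S_l$. This gives a total order on $S_l$:
\[
\eta\prec\eta' \iff e_d(\eta)<e_d(\eta').
\]
The pivots will be chosen by a greedy (row-echelon) procedure along this order. Consider the vectors $\Lambda_\eta\in\mathbb R^{n+1}$ for $\eta\in S_l$, listed in increasing order of $e_d(\eta)$. Set $\beta_0=0$; then $\Lambda_0=(1,f(0))\neq0$ and $p_0=e_d(0)=0$. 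Inductively, let $\beta_{i}$ be the $\prec$-first multi-index $\eta\in S_l$ such that $\Lambda_\eta\notin\operatorname{span}\{\Lambda_{\beta_0},\dots,\Lambda_{\beta_{i-1}}\}$. The procedure runs until the span reaches $\mathbb R^{n+1}$. It does not stop early, because hypothesis \eqref{46} gives
\[
\operatorname{span}\{\Lambda_0,\Lambda_{\alpha_1},\dots,\Lambda_{\alpha_n}\}=\mathbb R^{n+1}.
\]
Indeed, the matrix with first column $(1,f(0))$ and other columns $(0,\partial_{\alpha_j}f(0)/\alpha_j!)$ has determinant equal to a nonzero multiple of the determinant in \eqref{46}. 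Since all $\alpha_j\in S_l$, the greedy basis consists of $n+1$ vectors. By construction $p_0<p_1<\dots<p_n$ and every $|\beta_i|\le l$.

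Next, the determinant bound. Each $\beta_i$ with $i\ge1$ is non-zero, so the first coordinate of $\Lambda_{\beta_i}$ is $0$. Expanding $\det(\Lambda_{\beta_0},\dots,\Lambda_{\beta_n})$ along the first row therefore gives
\[
\det(\Lambda_{\beta_0},\dots,\Lambda_{\beta_n})=\Bigl(\prod_{i\geq 1}\beta_i!\Bigr)^{-1}\det\bigl(\partial_{\beta_1}f(0),\dots,\partial_{\beta_n}f(0)\bigr).
\]
This is non-zero, so we may take $c_{\mathrm{piv}}$ to be its absolute value.

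The bound $p_n\le\max_j e_d(\alpha_j)$ follows from the greedy property. Let $M:=\max_j e_d(\alpha_j)$. All of $\Lambda_0,\Lambda_{\alpha_1},\dots,\Lambda_{\alpha_n}$ have exponent at most $M$ and span $\mathbb R^{n+1}$. Hence the greedy procedure completes the full basis using only indices with exponent at most $M$, which gives $p_n\le M$. The same exchange argument proves minimality. Suppose $q_0'<\dots<q_n'$ come from a basis $\Lambda_{\eta_0'},\dots,\Lambda_{\eta_n'}$, and suppose $p_i>q_i'$ for some $i$. Then $\Lambda_{\eta_0'},\dots,\Lambda_{\eta_i'}$ are $i+1$ independent vectors, all with exponent at most $q_i'<p_i$. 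On the other hand, every $\Lambda_\eta$ with $\eta\in S_l$ and $e_d(\eta)<p_i$ lies in $\operatorname{span}\{\Lambda_{\beta_0},\dots,\Lambda_{\beta_{i-1}}\}$, which is $i$-dimensional. This is a contradiction, so $p_i\le q_i'$ for every $i$.

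The main obstacle is the phrase ``for every $d\ge d_0$''. The greedy order, and hence the $\beta_i$, could a priori depend on $d$. Two routes are available:
\begin{itemize}
\item State the $\beta_i$ as chosen for each fixed $d$. All the claimed properties hold for each $d$, and this is all the later argument needs.
\item Show that the order $\prec$ on $S_l$ is independent of $d\ge l+1$. I would try this first. Writing $e_d(\eta)=|\eta|d^m+\sum_j\eta_j d^{j-1}$ with $\sum_j\eta_j d^{j-1}<d^m$, the order compares $|\eta|$ first. On ties, it compares the base-$d$ digits $\eta_m,\dots,\eta_1$ lexicographically, and these are genuine digits since each $\eta_j\le l<d$. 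This is a $d$-independent order, so the pivots are independent of $d$.
\end{itemize}
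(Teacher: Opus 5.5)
Your proposal is correct and follows essentially the same route as the paper: spanning via \eqref{46}, a $d$-independent order on $S_l$ obtained by comparing $|\eta|$ first and then the base-$d$ digits (your second route is exactly the paper's argument), greedy pivot selection along that order, and first-row expansion for the determinant. Your dimension-counting argument for minimality, and for the bound on $p_n$, simply spells out what the paper states is immediate from the greedy construction.
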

\begin{proof}
Since \(f_0\equiv 1\), we have
\[
\Lambda_0=(1,f_1(0),\dots,f_n(0)),
\]
while, for every non-zero multi-index \(\alpha\),
\[
\Lambda_\alpha
=
\left(
0,
\frac{\partial_\alpha f_1(0)}{\alpha!},
\dots,
\frac{\partial_\alpha f_n(0)}{\alpha!}
\right).
\]
Therefore, by \eqref{46},
\begin{align*}
\left|
\det(
\Lambda_0,
\Lambda_{\alpha_1},
\dots,
\Lambda_{\alpha_n})
\right|
=
\frac{
\left|
\det(
\partial_{\alpha_1}f(0),
\dots,
\partial_{\alpha_n}f(0))
\right|
}{
\alpha_1!\cdots\alpha_n!
}>0.
\end{align*}
Hence $\Lambda_0,\Lambda_{\alpha_1},\dots,\Lambda_{\alpha_n}$ span \(\mathbb R^{n+1}\). Since
\(|\alpha_i|\leq l\) for every \(1\leq i\leq n\), this spanning
collection is contained in $\{\Lambda_\alpha:|\alpha|\leq l\}$. By Lemma \ref{lem4}, \(e_d\) is injective on
\[
S_l=\{\alpha\in\mathbb Z_{\geq0}^m:|\alpha|\leq l\}
\]
whenever \(d\geq \max\lbrace l,2\rbrace\), and in particular whenever $d\geq d_0$, since $d_0=\max\lbrace N,l+1\rbrace\geq 2$. Moreover, the relative ordering of the values $\{e_d(\alpha):\alpha\in S_l\}$ is independent of \(d\geq d_0\). Indeed, for distinct \(\alpha,\alpha'\in S_l\),
\[
e_d(\alpha)-e_d(\alpha')
=
(|\alpha|-|\alpha'|)d^m
+
\sum_{j=1}^m(\alpha_j-\alpha_j')d^{j-1}.
\]
If $\vert \alpha\vert <\vert \alpha'\vert$, then, since $d\geq d_0>l$,
\[
e_d(\alpha)\leq \vert\alpha\vert d^m+ld^{m-1}<(\vert \alpha\vert+1)d^m\leq e_d(\alpha').
\]
If $\vert \alpha\vert =\vert \alpha'\vert$, let $k$ be the largest index with $\alpha_k\neq \alpha_k'$. Since $\vert \alpha_j-\alpha_j'\vert\leq l\leq d-1$ for every $j$, we have $\vert \sum_{j<k}(\alpha_j-\alpha_j')d^{j-1}\vert\leq d^{k-1}-1$, so the sign of $e_d(\alpha)-e_d(\alpha')$ is that of $\alpha_k-\alpha_k'$, which does not depend on $d$. Fix this ordering and write
\[
0=q_0<q_1<\cdots<q_r
\]
for the values \(e_d(\alpha)\) with \(|\alpha|\leq l\). By injectivity, for each \(0\leq j\leq r\) there is a unique multi-index
\(\eta_j\in S_l\) such that $q_j=e_d(\eta_j)$. The multi-indices \(\eta_j\) are therefore independent of the particular choice of \(d\ge d_0\).

We now construct the pivots recursively. Since \(q_0=0\), necessarily
\(\eta_0=0\), and $\Lambda_{\eta_0}=\Lambda_0\neq0$. Set $j_0=0$. Suppose that $0=j_0<j_1<\cdots<j_{k-1}$ have been chosen so that
\[
\Lambda_{\eta_{j_0}},
\dots,
\Lambda_{\eta_{j_{k-1}}}
\]
are linearly independent. Define \(j_k\) to be the smallest index
\(j>j_{k-1}\) such that
\[
\Lambda_{\eta_j}
\notin
\operatorname{span}
\{
\Lambda_{\eta_{j_0}},
\dots,
\Lambda_{\eta_{j_{k-1}}}
\}.
\]
Such an index exists until \(k=n\), because $\Lambda_0,\Lambda_{\alpha_1},\dots,\Lambda_{\alpha_n}$ already span \(\mathbb R^{n+1}\). Thus the construction terminates with
indices
\[
0=j_0<j_1<\cdots<j_n\leq r.
\]
Set
\[
\beta_k:=\eta_{j_k},
\quad
p_k:=e_d(\beta_k)=q_{j_k},
\quad
0\leq k\leq n.
\]
Then
\[
\beta_0=0,
\quad
|\beta_k|\leq l,
\]
and
\[
0=p_0<p_1<\cdots<p_n.
\]
Furthermore, $\Lambda_{\beta_0},\dots,\Lambda_{\beta_n}$ are linearly independent, and therefore
\[
\left|
\det(
\Lambda_{\beta_0},
\dots,
\Lambda_{\beta_n})
\right|=\frac{
\left|
\det(
\partial_{\beta_1}f(0),
\dots,
\partial_{\beta_n}f(0))
\right|
}{
\beta_1!\cdots\beta_n!
}>0.
\]
Since the multi-indices \(\beta_0,\dots,\beta_n\) are fixed, we may
choose a constant \(c_{\mathrm{piv}}>0\) such that
\[
\left|
\det(
\partial_{\beta_1}f(0),
\dots,
\partial_{\beta_n}f(0))
\right|
\ge c_{\mathrm{piv}}.
\]
The minimality property is immediate from the greedy construction, since \(p_k\) is chosen to be the smallest exponent at which the span increases from dimension \(k\) to \(k+1\). The bound
\[
p_n\leq \max_{1\leq j\leq n}e_d(\alpha_j)
\]
follows immediately by applying the minimality property to the spanning collection $\Lambda_0,\Lambda_{\alpha_1},\dots,\Lambda_{\alpha_n}$. This completes the proof.
\end{proof}
The next step is to show that the pivot data constructed above persist throughout a sufficiently small \(C^N\) neighbourhood of the original map.
\begin{lem}[Persistence of pivot data]\label{lem3}
Let the notation and assumptions be as in Lemma~\ref{lemp}, and let \(M=\|f\|_{C^l(\overline B)}+1\). Then there exists
\[
0<\varepsilon_0<
\min\left\{
1,
\frac{c_{\mathrm{piv}}}
{2nn!M^n}
\right\}
\]
such that the following holds. If \(f_\varepsilon\) is analytic and satisfies
\begin{align}\label{7}
\|f_\varepsilon-f\|_{C^N(\overline B)}
<\varepsilon,
\quad
0<\varepsilon\le\varepsilon_0,
\end{align}
then
\[
\left|
\det\bigl(
\partial_{\beta_1}f_\varepsilon(0),
\dots,
\partial_{\beta_n}f_\varepsilon(0)
\bigr)
\right|
\geq
\frac{c_{\mathrm{piv}}}{2}.
\]
\end{lem}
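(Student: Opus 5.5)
The plan is a direct continuity argument for the determinant, made quantitative through multilinearity. Write $v_i:=\partial_{\beta_i}f(0)$ and $w_i:=\partial_{\beta_i}f_\varepsilon(0)$ for $1\le i\le n$. Since $|\beta_i|\le l\le N$, the hypothesis \eqref{7} gives $|w_i-v_i|\le \sqrt{n}\,\varepsilon$ coordinatewise-to-Euclidean. To avoid the $\sqrt n$ factor, I will instead work with the sup-norm of each coordinate, or simply absorb the constant into the choice of $\varepsilon_0$; the statement only requires existence of some $\varepsilon_0$ below the displayed threshold. Also, $|v_i|\le \sqrt n\,\|f\|_{C^l}$ and $|w_i|\le |v_i|+\sqrt n\,\varepsilon$. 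After possibly replacing $M$ by a fixed multiple depending only on $n$, or by shrinking $\varepsilon_0$, both are bounded by a constant multiple of $M$ when $\varepsilon\le 1$.

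Next I will telescope:
\[
\det(w_1,\dots,w_n)-\det(v_1,\dots,v_n)=\sum_{k=1}^n \det(w_1,\dots,w_{k-1},\,w_k-v_k,\,v_{k+1},\dots,v_n).
\]
Each summand is bounded by Hadamard's inequality, or crudely by the Leibniz expansion with $n!$ terms. This gives at most $n!\,C_n M^{n-1}\varepsilon$, and hence the total difference is at most $n\,n!\,C_nM^{n-1}\varepsilon$, where $C_n$ depends only on $n$.

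Finally, I will choose $\varepsilon_0$ strictly smaller than both $\min\{1,c_{\mathrm{piv}}/(2nn!M^n)\}$ and $c_{\mathrm{piv}}/(2n\,n!\,C_nM^{n-1})$. Since $M\ge1$ and the right-hand sides are positive, such an $\varepsilon_0>0$ exists. Then the difference is less than $c_{\mathrm{piv}}/2$, and Lemma~\ref{lemp} gives
\[
|\det(w_1,\dots,w_n)|\ge c_{\mathrm{piv}}-\tfrac{c_{\mathrm{piv}}}{2}=\tfrac{c_{\mathrm{piv}}}{2}.
\]
Analyticity of $f_\varepsilon$ plays no role here.

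The main obstacle is only the bookkeeping of norm constants, namely sup-norm versus Euclidean norm in $\|\cdot\|_{C^N}$ and in the determinant bound. Since the lemma asserts only the existence of $\varepsilon_0$ below an explicit threshold, I handle this by taking $\varepsilon_0$ smaller still, with a dimensional factor, rather than matching the displayed constant exactly.
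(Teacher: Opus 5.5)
Your proposal is correct and follows the paper's proof: you telescope the determinant difference by multilinearity, bound each summand by $n!\,\varepsilon M^{n-1}$ (up to constants) via the Leibniz formula, and conclude with the reverse triangle inequality. Your extra dimensional constant $C_n$ is harmless but unnecessary. The paper's $\|\cdot\|_{C^N(\overline B)}$ uses the Euclidean norm of $\partial_\alpha f(x)$, so $|w_i-v_i|<\varepsilon$ and $|w_i|,|v_i|\le M$ hold directly, and since $M\ge1$ gives $c_{\mathrm{piv}}/(2nn!M^{n})\le c_{\mathrm{piv}}/(2nn!M^{n-1})$, any $\varepsilon_0$ below the displayed threshold already works.
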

\begin{proof}
Since \(f\in C^N\) and \(\overline B\) is compact, for each
\(i=1,\dots,n\) we have
\[
|\partial_{\beta_i}f(0)|
\le \|f\|_{C^l(\overline B)}
< M.
\]
Let \(f_\varepsilon\) be as in the statement. Since \(l\le N\) and
\(0<\varepsilon\le \varepsilon_0<1\), we have
\[
\|f_\varepsilon\|_{C^l(\overline B)}
\le
\|f\|_{C^l(\overline B)}
+
\|f_\varepsilon-f\|_{C^l(\overline B)}
<
\|f\|_{C^l(\overline B)}+1
=
M.
\]
In particular,
\begin{align}\label{8}
|\partial_{\beta_i}f_\varepsilon(0)|
\le
M
\end{align}
for each \(i=1,\dots,n\). By the multi-linearity of the determinant,
\begin{align}\label{6}
&
\det(
\partial_{\beta_1}f_\varepsilon(0),
\dots,
\partial_{\beta_n}f_\varepsilon(0))
-
\det(
\partial_{\beta_1}f(0),
\dots,
\partial_{\beta_n}f(0))
\notag
\\
&=
\sum_{k=1}^{n}
\det(
\partial_{\beta_1}f_\varepsilon(0),
\dots,
\partial_{\beta_{k-1}}f_\varepsilon(0),
\partial_{\beta_k}f_\varepsilon(0)
-
\partial_{\beta_k}f(0),
\notag
\\
&\qquad\qquad\qquad\qquad\qquad\qquad\qquad\quad
\partial_{\beta_{k+1}}f(0),
\dots,
\partial_{\beta_n}f(0)
).
\end{align}
For column vectors \(C_1,\dots,C_n\in\mathbb R^n\), the Leibniz formula gives
\[
\det(C_1,\dots,C_n)
=
\sum_{\sigma\in S_n}
\operatorname{sgn}(\sigma)
\prod_{i=1}^{n}
(C_i)_{\sigma(i)},
\]
where \((C_i)_j\) denotes the \(j\)-th component of \(C_i\). Together with the bounds \eqref{7} and \eqref{8}, the absolute value of each determinant on the RHS of \eqref{6} is bounded by $n!\,\varepsilon M^{n-1}$. Hence
\[
\left|
\det(
\partial_{\beta_1}f_\varepsilon(0),
\dots,
\partial_{\beta_n}f_\varepsilon(0))
-
\det(
\partial_{\beta_1}f(0),
\dots,
\partial_{\beta_n}f(0))
\right|
\le
nn!\,\varepsilon M^{n-1}.
\]
Since
\[
0<\varepsilon\le\varepsilon_0<
\frac{c_{\mathrm{piv}}}
{2nn!M^{n-1}},
\]
we have
\[
\left|
\det(
\partial_{\beta_1}f_\varepsilon(0),
\dots,
\partial_{\beta_n}f_\varepsilon(0))
-
\det(
\partial_{\beta_1}f(0),
\dots,
\partial_{\beta_n}f(0))
\right|
\le
\frac{c_{\mathrm{piv}}}{2}.
\]
The reverse triangle inequality therefore yields
\begin{align*}
\left|
\det(
\partial_{\beta_1}f_\varepsilon(0),
\dots,
\partial_{\beta_n}f_\varepsilon(0))
\right|
&\ge
-
\frac{c_{\mathrm{piv}}}{2}
+
\left|
\det(
\partial_{\beta_1}f(0),
\dots,
\partial_{\beta_n}f(0))
\right|
\\
&\ge
c_{\mathrm{piv}}
-
\frac{c_{\mathrm{piv}}}{2}
=
\frac{c_{\mathrm{piv}}}{2}.
\qedhere
\end{align*}
\end{proof}
We now shift our attention from the determinant of the pivot derivatives on the manifold to the Wronskian determinant of the associated fibered curves. The pivot data constructed above will provide the key ingredient in identifying its leading non-vanishing term.
\subsubsection{Uniform Wronskian lower bound for polynomial term}
Although each perturbation $f_\varepsilon$ is analytic, we shall only consider its Taylor expansion up to order $N$. The reason is that the uniform information inherited from the original manifold $f$ is encoded in the estimate
\[
\|f_\varepsilon-f\|_{C^N(\overline B)}<\varepsilon,
\]
which provides control of derivatives only up to order $N$.
Since the original manifold is assumed merely to be $C^N$, no uniform control on higher-order derivatives is available. Consequently, in order to obtain estimates which are uniform in
$0<\varepsilon\le\varepsilon_0$, it is necessary to isolate the Taylor terms of degree at most $N$ and treat the higher-order terms as a remainder.

Recall that $f_\varepsilon=(f_{\varepsilon,1},\dots,f_{\varepsilon,n})$ and $f_{\varepsilon,0}=1$. For each $i=0,1,\dots,n$, Taylor's theorem gives
\[
f_{\varepsilon,i}(x)=\sum_{\vert \alpha\vert \leq N}\frac{\partial_\alpha f_{\varepsilon,i}(0)}{\alpha!}x^\alpha+R_{\varepsilon,i}(x),
\]
where $R_{\varepsilon,i}(x)$ denotes the Taylor remainder. Substituting the parametrisation (\ref{5}) and writing $x=\gamma_u(t)$, define
\begin{align}\label{14}
\phi_{\varepsilon,u,i}(t) &:=f_{\varepsilon,i}(\gamma_u(t))=\sum_{\vert \alpha\vert \leq N}\frac{\partial_\alpha f_{\varepsilon,i}(0)}{\alpha!}(\gamma_u(t))^\alpha+R_{\varepsilon,i}(\gamma_u(t))\notag\\
&=\sum_{\vert \alpha\vert \leq N}\Lambda_\alpha^{(i)}(\varepsilon)\cdot u_1^{\alpha_1}\cdots u_m^{\alpha_m}t^{\alpha_1a_1+\cdots+\alpha_ma_m}+R_{\varepsilon,i}(\gamma_u(t))\notag\\
&=\sum_{\vert \alpha\vert \leq N}\Lambda_\alpha^{(i)}(\varepsilon)\cdot u^\alpha\cdot t^{e_d(\alpha)}+R_{\varepsilon,i}(\gamma_u(t))\notag\\
&=P_{\varepsilon,u,i}(t)+E_{\varepsilon,u,i}(t),
\end{align}
where, by definition, $e_d(\alpha)=\alpha_1a_1+\cdots+\alpha_ma_m=\sum_{j=1}^{m}\alpha_j(d^{j-1}+d^m)$.

The goal of this subsection is to establish a uniform lower bound for the Wronskian determinant associated with the polynomial part $P_{\varepsilon,u,i}$. The quantitative non-degeneracy encoded by the pivot multi-indices $\beta_1,\dots,\beta_n$ will allow us to identify the leading non-vanishing term of the Wronskian expansion, yielding a uniform lower bound independent of $0<\varepsilon\le\varepsilon_0$.

The Wronskian determinant is closely related to linear independence in the analytic setting. Therefore, before establishing a uniform lower bound for the Wronskian of the polynomial part, we first show that the polynomial family $P_{\varepsilon,u,0},\dots,P_{\varepsilon,u,n}$ satisfies a quantitative form of linear independence.
\begin{lem}[Quantitative linear independence of the polynomial term]\label{lem5}
Let \(P_{\varepsilon,u,i}\) be defined by \eqref{14}, and let
\(U_0\subset(\mathbb R^\times)^m\) be compact as in Theorem \ref{upnf}. Then there exists a constant \(\mu_1>0\), depending only on \(U_0\), such that for every \(u\in U_0\), every \(0<\varepsilon\le\varepsilon_0\), and every \(c=(c_0,\dots,c_n)\in\mathbb R^{n+1}\) with \(|c|=1\), there exists \(\alpha\in\{0,\beta_1,\dots,\beta_n\}\) satisfying
\[
\bigl|
c\cdot(u^\alpha\Lambda_\alpha(\varepsilon))
\bigr|
\ge \mu_1.
\]
In particular, the polynomials $P_{\varepsilon,u,0},\dots,P_{\varepsilon,u,n}$ are linearly independent over \(\mathbb R\) for every \(u\in U_0\) and every \(0<\varepsilon\le\varepsilon_0\).
\end{lem}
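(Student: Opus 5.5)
The plan is to turn the persistence of the pivot determinant (Lemma~\ref{lem3}) into a uniform lower bound on the smallest singular value of the matrix whose columns are $u^{\beta_k}\Lambda_{\beta_k}(\varepsilon)$, $k=0,\dots,n$. Write $A(\varepsilon,u)$ for the $(n+1)\times(n+1)$ matrix with these columns, where $\beta_0=0$. For $c\in\mathbb R^{n+1}$ the entries of $c^{T}A(\varepsilon,u)$ are exactly the numbers $c\cdot(u^{\beta_k}\Lambda_{\beta_k}(\varepsilon))$. It therefore suffices to show $|c^{T}A(\varepsilon,u)|\ge \sqrt{n+1}\,\mu_1$ for all unit $c$. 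Indeed, some coordinate then has absolute value at least $\mu_1$, and that coordinate supplies the required $\alpha\in\{0,\beta_1,\dots,\beta_n\}$.

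First I compute the determinant. Exactly as in the proof of Lemma~\ref{lemp}, the first row of $A$ is $(1,0,\dots,0)$, so expanding along it gives
\[
\det A(\varepsilon,u)=\Bigl(\prod_{k=1}^n u^{\beta_k}\Bigr)\frac{\det(\partial_{\beta_1}f_\varepsilon(0),\dots,\partial_{\beta_n}f_\varepsilon(0))}{\beta_1!\cdots\beta_n!}.
\]
By Lemma~\ref{lem3} the last determinant is at least $c_{\mathrm{piv}}/2$ in absolute value. Since $U_0$ is compact in $(\mathbb R^\times)^m$, there are constants $0<\lambda\le\Lambda$ with $\lambda\le|u_j|\le\Lambda$ for all $u\in U_0$ and all $j$, and hence $|u^{\beta_k}|\ge\min(\lambda,1)^{l}$. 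This gives a uniform bound $|\det A(\varepsilon,u)|\ge\delta>0$.

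Second, I bound the entries uniformly. By \eqref{8}, together with $|f_{\varepsilon}(0)|\le M$, every entry of $A$ is at most $M\max(\Lambda,1)^{l}$ in absolute value, so $\|A\|_{\mathrm{op}}\le K$ for a uniform constant $K$. The determinant is the product of the singular values $\sigma_{\min}\le\cdots\le\sigma_{\max}$, and each of these is at most $K$. Therefore
\[
\sigma_{\min}(A)\ge\frac{|\det A|}{K^{n}}\ge\frac{\delta}{K^n},
\]
and consequently $|c^{T}A|=|A^{T}c|\ge\delta/K^n$ for every unit $c$. Setting $\mu_1:=\delta/(\sqrt{n+1}\,K^n)$ finishes the main claim. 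This $\mu_1$ depends on $U_0$ and on the fixed data $f,\beta_i,c_{\mathrm{piv}},M$, but not on $\varepsilon$ or $u$.

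For linear independence, suppose $\sum_i c_iP_{\varepsilon,u,i}\equiv0$ with $|c|=1$. By \eqref{14} the coefficient of $t^{e_d(\alpha)}$ in this combination is a sum over all $\alpha'$ with $|\alpha'|\le N$ and $e_d(\alpha')=e_d(\alpha)$. Lemma~\ref{lem4}, applied with $d_0=N$ (allowed since $d\ge\max\{N,l+1\}$), shows that $e_d$ is injective on $S_N$. So for each $\alpha$ with $|\alpha|\le N$ the coefficient of $t^{e_d(\alpha)}$ is exactly $c\cdot(u^\alpha\Lambda_\alpha(\varepsilon))$. Taking $\alpha$ to be the index produced above (note $|\beta_k|\le l\le N$) gives a nonzero coefficient, which is a contradiction.

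The only delicate point is making the constant uniform, i.e.\ independent of $\varepsilon$. This is handled by the uniform $C^l$ bound $M$ from Lemma~\ref{lem3} together with the compactness of $U_0$; the rest is linear algebra.
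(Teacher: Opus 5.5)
Your proof is correct and is essentially the paper's argument: Lemma~\ref{lem3} and uniform entry bounds give $\sigma_{\min}\ge|\det|/\|\cdot\|_{\mathrm{op}}^{n}$, you pass from the Euclidean norm of $c^{T}A$ to its largest coordinate, and linear independence follows from the injectivity of $e_d$. The differences are cosmetic. You build the factors $u^{\beta_k}$ into the matrix from the start, so the determinant picks up $\prod_k u^{\beta_k}$ and the entry bound picks up a power of $\sup_{u\in U_0}\max_j|u_j|$; the paper instead first bounds $\max_\alpha|c\cdot\Lambda_\alpha(\varepsilon)|$ and then multiplies by $\rho^l$. You also invoke injectivity of $e_d$ on $S_N$ rather than the separation property (ii) of Lemma~\ref{lem4} on $S_l$.
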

\begin{proof}
By Lemma \ref{lem3}, we have
\begin{align*}
\left|
\det(
\Lambda_0(\varepsilon),
\Lambda_{\beta_1}(\varepsilon),
\dots,
\Lambda_{\beta_n}(\varepsilon))
\right|
&=
\frac{
\left|
\det(
\partial_{\beta_1}f_\varepsilon(0),
\dots,
\partial_{\beta_n}f_\varepsilon(0))
\right|
}{
\beta_1!\cdots\beta_n!
}\\
&\ge
\frac{
c_{\mathrm{piv}}
}{
2\beta_1!\cdots\beta_n!
}.
\end{align*}
Define
\begin{align}\label{18}
\Delta_0
:=
\frac{
c_{\mathrm{piv}}
}{
2\beta_1!\cdots\beta_n!
}>0.
\end{align}
Since
\(
\|f_\varepsilon-f\|_{C^l(\overline B)}
<
\varepsilon_0
<
1,
\)
we have
\[
\|f_\varepsilon\|_{C^l(\overline B)}
<
\|f\|_{C^l(\overline B)}+1.
\]
Therefore every entry of the matrix
\[
A_\varepsilon
=
\begin{pmatrix}
\Lambda_0(\varepsilon)\\
\Lambda_{\beta_1}(\varepsilon)\\
\vdots\\
\Lambda_{\beta_n}(\varepsilon)
\end{pmatrix}
\]
is uniformly bounded. Hence there exists a constant \(M_A>0\), independent of
\(0<\varepsilon\le\varepsilon_0\), such that
\[
\|A_\varepsilon\|_{\mathrm{op}}
\le
M_A.
\]
Denote
\[
\sigma_{\mathrm{min}}(A_\varepsilon)
=
\inf_{|v|=1}|A_\varepsilon v|.
\]
Then \(A_\varepsilon\) is invertible if and only if
\(\sigma_{\mathrm{min}}(A_\varepsilon)>0\). For a real \((n+1)\times(n+1)\) matrix \(A_\varepsilon\), its singular values are defined by
\[
\sigma_i(A_\varepsilon)
=
\sqrt{\lambda_i(A_\varepsilon^TA_\varepsilon)},
\]
where \(\lambda_i(A_\varepsilon^TA_\varepsilon)\) are the eigenvalues of the positive semi-definite matrix \(A_\varepsilon^TA_\varepsilon\). We order them as
\[
\sigma_1(A_\varepsilon)\ge\sigma_2(A_\varepsilon)\ge\cdots\ge\sigma_{n+1}(A_\varepsilon)\ge0,
\]
with $\sigma_1(A_\varepsilon)=\|A_\varepsilon\|_{\mathrm{op}}$ and $\sigma_{\mathrm{min}}(A_\varepsilon)=\sigma_{n+1}(A_\varepsilon)$. Moreover,
\[
|\det A_\varepsilon|
=
\prod_{i=1}^{n+1}\sigma_i(A_\varepsilon).
\]
Recall 
\[
\Vert A_\varepsilon\Vert_{\mathrm{op}}=\sup_{\vert v\vert=1}\vert A_\varepsilon v\vert\leq M_A,
\]
so $\sigma_1(A_\varepsilon),\dots,\sigma_n(A_\varepsilon),\sigma_{\text{min}}(A_\varepsilon)\leq M_A$. Hence
\[
\vert \det A_\varepsilon\vert\leq M_A^n\cdot \sigma_{\text{min}}(A_\varepsilon).
\]
Therefore
\[
\sigma_{\text{min}}(A_\varepsilon)\geq \frac{\vert \det A_\varepsilon\vert}{M^n_A}\geq \frac{\Delta_0}{M_A^n}.
\]
Hence, for every $c\in \mathbb{R}^{n+1}$ with $\vert c\vert=1$, we have
\[
\vert A_\varepsilon c\vert=\left\vert\begin{pmatrix}
\Lambda_0(\varepsilon)\cdot c\\
\vdots\\
\Lambda_{\beta_n}(\varepsilon)\cdot c
\end{pmatrix}\right\vert=((\Lambda_0(\varepsilon)\cdot c)^2+\cdots +(\Lambda_{\beta_n}(\varepsilon)\cdot c)^2)^{1/2}\geq \frac{\Delta_0}{M_A^n}>0.
\]
The above estimate implies that
\begin{align}\label{9}
\max_{\alpha\in \lbrace 0,\beta_1,\dots,\beta_n\rbrace}\vert \Lambda_\alpha(\varepsilon)\cdot c\vert \geq \frac{\Delta_0}{\sqrt{n+1}M_A^n}=:\mu_0>0.
\end{align}

Recall from \eqref{14} that
\[
P_{\varepsilon,u}(t)
=
(P_{\varepsilon,u,0}(t),\dots,P_{\varepsilon,u,n}(t))
=
\sum_{|\alpha|\le N}
u^\alpha
\Lambda_\alpha(\varepsilon)
t^{e_d(\alpha)}.
\]
The coefficient vectors of the polynomial family are therefore
\(u^\alpha\Lambda_\alpha(\varepsilon)\). Since \(U_0\subset(\mathbb R^\times)^m\) is compact,
\begin{align}\label{17}
\rho
:=
\min
\left\{
1,
\inf_{u\in U_0}\min_{1\le j\le m}|u_j|
\right\}
>0.
\end{align}
Hence, for any \(u\in U_0\) and any multi-index \(\alpha\) with \(|\alpha|\le l\),
\[
|u^\alpha|
=
\prod_{j=1}^m|u_j|^{\alpha_j}
\ge
\rho^{|\alpha|}
\ge
\rho^l.
\]
Combining this with \eqref{9} gives
\begin{align*}
\max_{\alpha\in\{0,\beta_1,\dots,\beta_n\}}
|c\cdot(u^\alpha\Lambda_\alpha(\varepsilon))|
&\ge
\rho^l
\max_{\alpha\in\{0,\beta_1,\dots,\beta_n\}}
|c\cdot\Lambda_\alpha(\varepsilon)|\\
&\ge
\rho^l\mu_0
=:
\mu_1>0.
\end{align*}

It remains to prove linear independence. Suppose
\[
\sum_{i=0}^n
c_iP_{\varepsilon,u,i}(t)
\equiv0
\]
for some
\(c=(c_0,\dots,c_n)\neq0\).
Then
\[
\sum_{|\alpha|\le N}
\bigl(
c\cdot(u^\alpha\Lambda_\alpha(\varepsilon))
\bigr)
t^{e_d(\alpha)}
\equiv0.
\]
Normalising \(c\) so that \(|c|=1\), the estimate above yields
\[
|c\cdot(u^\alpha\Lambda_\alpha(\varepsilon))|
\ge
\mu_1
\]
for some
\(\alpha\in\{0,\beta_1,\dots,\beta_n\}\).
Since \(|\alpha|\le l\), Lemma \ref{lem4} implies that
\(e_d(\alpha)\) is distinct from every other exponent arising in the above expansion. Hence the corresponding monomial cannot cancel with any other term, a contradiction. Therefore $P_{\varepsilon,u,0},
\dots,
P_{\varepsilon,u,n}$ are linearly independent over \(\mathbb R\).
\end{proof}
Now define the extended \((n+1)\times(n+1)\) Wronskian of the polynomial term by
\begin{align}\label{57}
W_{\varepsilon,u}^P(t)
:=
W(P_{\varepsilon,u,0},P_{\varepsilon,u,1},\dots,P_{\varepsilon,u,n})(t)
=
\det
\begin{pmatrix}
P_{\varepsilon,u,0}(t) & \cdots & P_{\varepsilon,u,n}(t)\\
P'_{\varepsilon,u,0}(t) & \cdots & P'_{\varepsilon,u,n}(t)\\
\vdots & \cdots & \vdots\\
P^{(n)}_{\varepsilon,u,0}(t) & \cdots & P^{(n)}_{\varepsilon,u,n}(t)
\end{pmatrix}.
\end{align}
Since \(f_{\varepsilon,0}\equiv 1\), we have \(P_{\varepsilon,u,0}\equiv 1\). Hence the extended Wronskian reduces to the \(n\times n\) determinant
\begin{align}\label{16}
W_{\varepsilon,u}^P(t)
=
\det
\bigl(
P_{\varepsilon,u,j}^{(i)}(t)
\bigr)_{1\le i,j\le n}.
\end{align}
The functions \(P_{\varepsilon,u,0},\dots,P_{\varepsilon,u,n}\) are polynomials, and hence analytic. By Lemma \ref{lem5}, they are linearly independent over \(\mathbb R\). Since, for analytic functions, linear independence is equivalent to the non-vanishing of the Wronskian, we have $W_{\varepsilon,u}^P\not\equiv 0$.

However, quantitative linear independence alone is not enough for our purposes. A compactness argument would yield a uniform lower bound for the Wronskian on any fixed interval $J=[0,t_*]$, but the resulting constant depends on $t_*$. Since $t_*$ must later be shrunk in order to control the remainder term, we instead seek an estimate in which the constant is independent of $t_*$. Rather than working directly with the size of the Wronskian on $[0,t_*]$, we analyse the polynomial Wronskian explicitly. Using the pivot construction, we identify a distinguished non-vanishing leading term whose coefficient admits a uniform positive lower bound. This coefficient level control is independent of $t_*$ and yields an explicit lower bound after rescaling.
\begin{prop}[Uniform Wronskian lower bound for the polynomial term]\label{prop2}
Let \(P_{\varepsilon,u,i}\) be defined by \eqref{14}, and let
\(U_0\subset(\mathbb R^\times)^m\) be compact. Then there exist constants
\(0< t_0\leq 1\) and \(c_P>0\), independent of \(0<\varepsilon\le\varepsilon_0\) and \(u\in U_0\), such that for every $0<t_*\leq t_0$, every \(0<\varepsilon\le\varepsilon_0\)
and every \(u\in U_0\), there exists \(t_{\varepsilon,u}\in[0,t_*]\)
satisfying
\[
\bigl|W_{\varepsilon,u}^P(t_{\varepsilon,u})\bigr|
\ge c_P t_*^{Q_{\mathrm{good}}},
\]
where
\begin{align}\label{63}
Q_{\mathrm{good}}
:=
P_n-\frac{n(n+1)}2,
\qquad
P_n:=p_1+\cdots+p_n.
\end{align}
Here \(p_1,\dots,p_n\) denote the pivot exponents constructed in Lemma~\ref{lemp}. Equivalently,
\begin{align}\label{94}
\sup_{t\in[0,t_*]}
\bigl|W_{\varepsilon,u}^P(t)\bigr|
\ge c_P t_*^{Q_{\mathrm{good}}}
\end{align}
uniformly for \(0<\varepsilon\le\varepsilon_0\) and \(u\in U_0\).
\end{prop}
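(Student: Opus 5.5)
The plan is to expand the polynomial Wronskian \eqref{16} explicitly as a polynomial in $t$. I will show that its lowest-order coefficient is exactly $t^{Q_{\mathrm{good}}}$ times a pivot determinant, which is bounded below uniformly, and that all higher coefficients are uniformly bounded. The estimate then comes from evaluating at $t=t_*$.

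\emph{Step 1: expansion.} Write $P_{\varepsilon,u}(t)=\sum_{|\alpha|\le N}u^\alpha\Lambda_\alpha(\varepsilon)t^{e_d(\alpha)}$ and use the extended form \eqref{57}. Since $d\ge d_0\ge N$, Lemma~\ref{lem4} shows that $e_d$ is injective on $S_N$, so every exponent carries a single coefficient vector $v_\alpha:=u^\alpha\Lambda_\alpha(\varepsilon)$. By multilinearity of the determinant in its columns, $W^P_{\varepsilon,u}$ is a sum over tuples $(\eta_0,\dots,\eta_n)$ of $\det(v_{\eta_0},\dots,v_{\eta_n})\,W(t^{e_d(\eta_0)},\dots,t^{e_d(\eta_n)})$. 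Only tuples with distinct exponents contribute. After reordering each tuple so that $q_0<\dots<q_n$, with $q_i=e_d(\eta_i)$, the classical monomial Wronskian formula
\[
W(t^{q_0},\dots,t^{q_n})=\prod_{i<j}(q_j-q_i)\,t^{q_0+\cdots+q_n-n(n+1)/2}
\]
gives
\[
W^P_{\varepsilon,u}(t)=\sum_{k\ge0}C_k(\varepsilon,u)\,t^{Q_{\mathrm{good}}+k}.
\]
Here each $C_k$ is a finite sum of Vandermonde factors times the determinants $\det(v_{\eta_0},\dots,v_{\eta_n})$.

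\emph{Step 2: the lowest term is the pivot term.} This is the step I expect to be the main obstacle. The difficulty is that the minimality statement in Lemma~\ref{lemp} is phrased only for multi-indices with $|\eta'|\le l$, whereas the expansion contains $|\eta|\le N$, and the statement must also hold for $f_\varepsilon$ rather than $f$. I will argue directly as follows.

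\begin{itemize}
\item \emph{Higher multi-indices come late.} By Lemma~\ref{lem4}(ii), together with the estimate $e_d(\eta)\ge(l+1)d^m>e_d(\alpha)$ for $|\eta|>l\ge|\alpha|$, every multi-index with $|\eta|>l$ has exponent larger than all exponents in $S_l$, in particular larger than $p_n$.
\item \emph{A counting bound.} Fix $k$. By the greedy construction, the vectors $\Lambda_\eta$ with $e_d(\eta)<p_k$ span a space of dimension exactly $k$ for the original $f$.
\item \emph{Transfer to $f_\varepsilon$.} The dimension statement for $\Lambda_\eta(\varepsilon)$ is not automatic, since small perturbations can raise rank. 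So for $\varepsilon$ small (shrinking $\varepsilon_0$ if needed, or arguing with $f$ and then perturbing), I instead compare sums of exponents directly, which is all that is needed.
\item \emph{Conclusion of the bound.} For any contributing tuple with nonzero determinant, the counting bound gives at most $k$ of the $q_i$ below $p_k$. Hence $q_k\ge p_k$, so $\sum q_i\ge P_n$, with equality only for the pivot tuple.
\end{itemize}

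If the perturbed rank issue obstructs the counting bound, the fallback is as follows. Non-pivot tuples with exponent sum $\le P_n$ have $\det(\Lambda_\eta)=0$ for $f$. By the determinant perturbation estimate of Lemma~\ref{lem3}, their determinants for $f_\varepsilon$ are $O(\varepsilon)$. Since there are finitely many such tuples, they can be absorbed after shrinking $\varepsilon_0$, and they are then treated together with the pivot term.

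The pivot coefficient is
\[
u^{\beta_1+\cdots+\beta_n}\det(\Lambda_0(\varepsilon),\Lambda_{\beta_1}(\varepsilon),\dots,\Lambda_{\beta_n}(\varepsilon))\prod_{i<j}(p_j-p_i).
\]
Using $\Delta_0$ from \eqref{18} and $\rho$ from \eqref{17}, its modulus is at least
\[
c_0:=\rho^{nl}\Delta_0\prod_{i<j}(p_j-p_i)>0,
\]
uniformly in $\varepsilon$ and $u$.

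\emph{Step 3: bounding the tail and evaluating.} The coefficients $C_k$ with $k\ge1$ are finite in number, since $W^P$ is a polynomial of degree at most $(n+1)e_d$ of $N$-level multi-indices. Each is bounded by a constant $K$ independent of $\varepsilon$ and $u$. This follows from $\|f_\varepsilon\|_{C^N}<\|f\|_{C^N}+1$ and the compactness of $U_0$, which bounds $|u^\alpha|$. Choose $0<t_0\le1$ such that $\sum_{k\ge1}K t_0^k\le c_0/2$. Then for $0<t_*\le t_0$, taking $t_{\varepsilon,u}=t_*$ gives
\[
|W^P_{\varepsilon,u}(t_*)|\ge t_*^{Q_{\mathrm{good}}}\Bigl(c_0-\sum_{k\ge1}K t_*^k\Bigr)\ge \tfrac{c_0}{2}\,t_*^{Q_{\mathrm{good}}}.
\]
Here $Q_{\mathrm{good}}\ge0$ because $p_i\ge i$. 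This yields the proposition with $c_P=c_0/2$, and \eqref{94} follows immediately.
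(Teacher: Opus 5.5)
There is a genuine gap in the expansion of Step 1 and the point evaluation of Step 3. The identity $W^P_{\varepsilon,u}(t)=\sum_{k\ge0}C_k(\varepsilon,u)\,t^{Q_{\mathrm{good}}+k}$ holds for $f$ but not for the perturbations $f_\varepsilon$. A perturbation can lower the pivot exponents: the greedy pivots of the $\Lambda_\alpha(\varepsilon)$ only satisfy $\tilde p_i(\varepsilon)\le p_i$. So $W^P_{\varepsilon,u}$ in general has non-zero terms of degree strictly below $Q_{\mathrm{good}}$.

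Your fallback correctly notes that these coefficients are $O(\varepsilon)$, but they cannot be absorbed by shrinking $\varepsilon_0$. A term of size $\varepsilon t_*^{q}$ with $q<Q_{\mathrm{good}}$ is dominated by $t_*^{Q_{\mathrm{good}}}$ only when $\varepsilon\lesssim t_*^{Q_{\mathrm{good}}-q}$. That fails as $t_*\to0$ with $\varepsilon$ fixed, and the proposition demands every $0<t_*\le t_0$ with $t_0,c_P$ independent of $\varepsilon$. Such terms can even cancel the pivot term exactly at $t=t_*$.

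For example, take $m=1$, $n=2$ and $f(x)=(x,x^3)$, so the pivots are $0,a_1,3a_1$ and $Q_{\mathrm{good}}=4a_1-3$. Let $u=1$ and $f_\varepsilon(x)=(x,x^3-\varepsilon'x^2)$, with $\varepsilon'>0$ small enough that $\|f_\varepsilon-f\|_{C^N(\overline B)}<\varepsilon$. A direct computation gives
\[
W^P_{\varepsilon,u}(t)=2a_1^3\,t^{3a_1-3}\bigl(3t^{a_1}-\varepsilon'\bigr),
\]
which vanishes at $t_*=(\varepsilon'/3)^{1/a_1}\in(0,t_0]$ for every small $\varepsilon'$. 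So the choice $t_{\varepsilon,u}=t_*$ fails however $t_0$, $c_P$ and $\varepsilon_0$ are chosen. Your counting bound also fails here, since the $\Lambda_\eta(\varepsilon)$ with $e_d(\eta)<3a_1$ span all of $\mathbb R^3$.

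What is missing is a lower bound that is insensitive to these lower-order terms, and for that you must use the supremum over $[0,t_*]$ rather than a single point. The paper rescales $t=t_*s$. The truncation $\sum_{q\le Q_{\mathrm{good}}}b_q t^q$ becomes $t_*^{Q_{\mathrm{good}}}\sum_{q}b_q t_*^{q-Q_{\mathrm{good}}}s^q$, and its coefficients of degree $q<Q_{\mathrm{good}}$ are only enlarged. Equivalence of norms on polynomials of degree at most $Q_{\mathrm{good}}$ then gives a supremum $\gtrsim |(b_0,\dots,b_{Q_{\mathrm{good}}})|\,t_*^{Q_{\mathrm{good}}}$, while the terms of degree $>Q_{\mathrm{good}}$ contribute $O(t_*^{Q_{\mathrm{good}}+1})$. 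The paper obtains the uniform bound $|(b_0,\dots,b_{Q_{\mathrm{good}}})|\ge B_1$ from the perturbed greedy pivots, combined with compactness over the coefficient data. Those pivots produce a non-cancelling non-zero monomial of degree $\tilde p_1(\varepsilon)+\cdots+\tilde p_n(\varepsilon)-n(n+1)/2\le Q_{\mathrm{good}}$.

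Your own estimates would also close the gap along this route. The pivot coefficient is at least $c_0$, and the other tuples of exponent sum exactly $P_n$ have $O(\varepsilon)$ determinants, so $|b_{Q_{\mathrm{good}}}|\ge c_0/2$ after shrinking $\varepsilon_0$. A Chebyshev-type bound for the rescaled polynomial on $[0,1]$ in terms of its top coefficient then replaces the point evaluation. As written, however, Step 3 does not prove the proposition.
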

\begin{proof}
\textbf{Step 1. Construction of the leading term.}

We first identify the leading non-vanishing term of the polynomial Wronskian. The pivot construction of Lemma~\ref{lemp} provides the key ingredient in identifying a distinguished leading term with non-zero coefficient. We then use a compactness argument to show that this coefficient is uniformly bounded away from zero. Recall from \eqref{14} that
\[
P_{\varepsilon,u}(t)
=
(P_{\varepsilon,u,0}(t),\dots,P_{\varepsilon,u,n}(t))
=
\sum_{|\alpha|\le N}
u^\alpha\Lambda_\alpha(\varepsilon)t^{e_d(\alpha)}.
\]
Since \(d\geq d_0\ge N\), Lemma~\ref{lem4} implies that \(e_d\) is injective on $\{\alpha\in\mathbb Z_{\ge0}^m:|\alpha|\le N\}$. Hence we may write
\begin{align}\label{56}
P_{\varepsilon,u}(t)
=
\sum_q
v_q(\varepsilon,u)t^q,
\end{align}
where the sum is finite, \(q\) ranges over the exponents
\(e_d(\alpha)\) with \(|\alpha|\le N\), and \(v_q(\varepsilon,u)\in\mathbb R^{n+1}\). By multi-linearity of the determinant, we have
\begin{align}\label{55}
&
\left|
\det\bigl(
u^0\Lambda_0(\varepsilon),
u^{\beta_1}\Lambda_{\beta_1}(\varepsilon),
\dots,
u^{\beta_n}\Lambda_{\beta_n}(\varepsilon)
\bigr)
\right|\notag
\\
&=
|u^{\beta_1+\cdots+\beta_n}|
\left|
\det\bigl(
\Lambda_0(\varepsilon),
\Lambda_{\beta_1}(\varepsilon),
\dots,
\Lambda_{\beta_n}(\varepsilon)
\bigr)
\right|\notag
\\
&\ge
\rho^{|\beta_1|+\cdots+|\beta_n|}
\Delta_0
>0,
\end{align}
where \(\rho\) is defined by \eqref{17}, \(\Delta_0\) is defined by \eqref{18}. Since \(u\in U_0\subset(\mathbb R^\times)^m\), each \(u^{\beta_i}\) is non-zero. Thus the vectors $u^0\Lambda_0(\varepsilon),
u^{\beta_1}\Lambda_{\beta_1}(\varepsilon),
\dots,
u^{\beta_n}\Lambda_{\beta_n}(\varepsilon)$ span \(\mathbb R^{n+1}\) if and only if $\Lambda_0(\varepsilon),
\Lambda_{\beta_1}(\varepsilon),
\dots,
\Lambda_{\beta_n}(\varepsilon)$ do. Applying the greedy construction used in Lemma~\ref{lemp} to $\{\Lambda_\alpha(\varepsilon):|\alpha|\le l\}$, we obtain multi-indices
\[
\tilde\beta_0(\varepsilon)=0,\tilde\beta_1(\varepsilon),\dots,\tilde\beta_n(\varepsilon),
\]
with associated exponents
\begin{align}\label{59}
0=\tilde p_0(\varepsilon)<\tilde p_1(\varepsilon)<\cdots<\tilde p_n(\varepsilon),\quad
\tilde p_i(\varepsilon)=e_d(\tilde\beta_i(\varepsilon)),
\end{align}
which are minimal among all spanning collections. By Lemma~\ref{lemp} and the lower bound \eqref{18}, the vectors $\Lambda_0(\varepsilon),
\Lambda_{\beta_1}(\varepsilon),
\dots,
\Lambda_{\beta_n}(\varepsilon)$ form a spanning collection. Hence, by the minimality of the perturbed pivots,
\begin{align}\label{58}
\tilde p_i(\varepsilon)\le p_i,
\quad
0\le i\le n.
\end{align}
Although the polynomial expansion ranges over all multi-indices with \(|\alpha|\le N\), whereas the pivot construction is carried out only on \(\{|\alpha|\le l\}\), this causes no difficulty. Indeed, 
\begin{align}\label{61}
|\alpha|<|\alpha'|
\quad\Longrightarrow\quad
e_d(\alpha)<e_d(\alpha')
\end{align}
for all \(d\ge d_0\) and all \(|\alpha|,|\alpha'|\le N\), since if $\vert \alpha\vert<\vert \alpha'\vert\leq N\leq d$, then $e_d(\alpha)\leq \vert \alpha\vert d^m+\vert \alpha\vert d^{m-1}\leq (\vert \alpha\vert+1)d^m<e_d(\alpha')$. Consequently, every exponent arising from \(|\alpha|>l\) is larger than every exponent arising from \(|\alpha|\le l\). Hence the perturbed pivot exponents $\tilde p_0(\varepsilon),\dots,\tilde p_n(\varepsilon)$ remain minimal among all spanning collections occurring in the polynomial expansion.

Using the extended Wronskian representation in \eqref{57}, together with the polynomial expansion in \eqref{56}, we obtain
\[
W_{\varepsilon,u}^P(t)=\det\left( \sum_q v_q(\varepsilon,u)t^q,\sum_q v_q(\varepsilon,u)(t^q)',\dots,\sum_q v_q(\varepsilon,u)(t^q)^{(n)}   \right).
\]
Expanding this determinant by multi-linearity in the columns gives
\[
W_{\varepsilon,u}^P(t)=\sum_{q_0,\dots,q_n}\det\left( v_{q_0}(\varepsilon,u),\dots,v_{q_n}(\varepsilon,u) \right) t^{q_0} (t^{q_1})'\cdots (t^{q_n})^{(n)}.
\]
If two exponents coincide, say $q_i=q_j$, then $v_{q_i}(\varepsilon,u)=v_{q_j}(\varepsilon,u)$, so the determinant has two identical columns and therefore vanishes. Hence only tuples with pairwise distinct exponents contribute. The above then becomes
\begin{align}\label{19}
&W_{\varepsilon,u}^P(t)
=\sum_{\substack{q_0<\cdots <q_n\\ \sigma\in S_{n+1}}}\det\left( v_{q_{\sigma(0)}}(\varepsilon,u),\dots,v_{q_{\sigma(n)}}(\varepsilon,u)\right)\cdot t^{q_{\sigma(0)}}\cdots (t^{q_{\sigma(n)}})^{(n)}\notag\\
&=\sum_{q_0<\cdots <q_n}\det\left( v_{q_0}(\varepsilon,u),\dots,v_{q_n}(\varepsilon,u)\right)\cdot\left( \sum_{\sigma\in S_{n+1}}\text{sgn}(\sigma) t^{q_{\sigma(0)}}\cdots (t^{q_{\sigma(n)}})^{(n)} \right)\notag\\
&=\sum_{0=q_0<\cdots<q_n}
\det\bigl(
v_{q_0}(\varepsilon,u),
\dots,
v_{q_n}(\varepsilon,u)
\bigr)
\cdot
\det
\begin{pmatrix}
t^{q_0} & \cdots & t^{q_n}\\
(t^{q_0})' & \cdots & (t^{q_n})'\\
\vdots & \ddots & \vdots\\
(t^{q_0})^{(n)} & \cdots & (t^{q_n})^{(n)}
\end{pmatrix},
\end{align}
where \(0=q_0<\cdots<q_n\) range over the exponents
\(e_d(\alpha)\) with \(|\alpha|\le N\). The restriction to $q_0=0$ is justified by the fact that, since $P_{\varepsilon,u,0}\equiv1$, the coefficient vectors satisfy $v_0(\varepsilon,u)=(1,*,\dots,*)^T$ and $v_q(\varepsilon,u)=(0,*,\dots,*)^T$ for $q>0$. Hence only terms containing the exponent $0$ can have non-zero coefficient determinant. 

Since \(\tilde\beta_i(\varepsilon)\neq0\) for every \(1\le i\le n\), the definition of
\(e_d\) in \eqref{12} gives
\[
\tilde p_i(\varepsilon)
=
e_d(\tilde\beta_i(\varepsilon))
\ge d^m.
\]
Hence, since $d\geq d_0\geq N\geq n$ and therefore \(d^m\ge n\), we have
\begin{align*}
\tilde p_i(\varepsilon)\ge n,
\quad
1\le i\le n.
\end{align*}
In particular, the derivatives
\((t^{\tilde p_i(\varepsilon)})^{(k)}\) are well defined for every
\(1\le k\le n\). 

We now focus on the contribution to \eqref{19} corresponding to the perturbed pivot exponents in \eqref{59}. For this term, since \(\tilde p_0(\varepsilon)=0\), the first column of the second determinant in \eqref{19} is $(1,0,\dots,0)^T$. Hence this determinant reduces to an \(n\times n\) Wronskian. Therefore the corresponding contribution is
\begin{align}\label{20}
\det\bigl(
v_{\tilde p_0(\varepsilon)}(\varepsilon,u),
\dots,
v_{\tilde p_n(\varepsilon)}(\varepsilon,u)
\bigr)
\cdot
W(
(t^{\tilde p_1(\varepsilon)})',
\dots,
(t^{\tilde p_n(\varepsilon)})'
).
\end{align}
Second term of \eqref{20} then becomes
\[
W((t^{\tilde p_1(\varepsilon)})',\dots,(t^{\tilde p_n(\varepsilon)})')
=\det\begin{pmatrix}
(\tilde p_1(\varepsilon))_1 t^{\tilde p_1(\varepsilon)-1} & \cdots & (\tilde p_n(\varepsilon))_1 t^{\tilde p_n(\varepsilon)-1}\\
\vdots & \cdots & \vdots\\
(\tilde p_1(\varepsilon))_nt^{\tilde p_1(\varepsilon)-n} & \cdots & (\tilde p_n(\varepsilon))_n t^{\tilde p_n(\varepsilon)-n}
\end{pmatrix},
\]
where
\[
(\tilde p_i(\varepsilon))_k
=
\tilde p_i(\varepsilon)(\tilde p_i(\varepsilon)-1)\cdots(\tilde p_i(\varepsilon)-k+1).
\]
Again, by multi-linearity of the determinant, factor out the exponent terms \(t^{\tilde p_i(\varepsilon)}\) from each column and then the exponent terms \(t^{-k}\) from each row. The above becomes
\[
t^{\tilde p_1(\varepsilon)+\cdots+\tilde p_n(\varepsilon)}
\cdot
t^{-1-\cdots-n}
\cdot
\det\begin{pmatrix}
(\tilde p_1(\varepsilon))_1 & \cdots & (\tilde p_n(\varepsilon))_1\\
\vdots & \cdots & \vdots\\
(\tilde p_1(\varepsilon))_n & \cdots & (\tilde p_n(\varepsilon))_n
\end{pmatrix}.
\]
Due to the definition of \((\tilde p_i(\varepsilon))_k\), we may factor out \(\tilde p_i(\varepsilon)\) from the \(i\)-th column of the above matrix. Again, by multi-linearity of the determinant, the above becomes
\[
t^{\tilde p_1(\varepsilon)+\cdots+\tilde p_n(\varepsilon)-\frac{n(n+1)}2}
\cdot
\prod_{i=1}^{n}\tilde p_i(\varepsilon)
\cdot
\det\bigl((\tilde p_i(\varepsilon)-1)_{j-1}\bigr)_{1\le i,j\le n}.
\]
It remains to evaluate the determinant. Since
\[
(\tilde p_i(\varepsilon)-1)_0=1,
\]
we have
\begin{align*}
&\det\begin{pmatrix}
1 & \cdots & 1\\
\tilde p_1(\varepsilon)-1 & \cdots & \tilde p_n(\varepsilon)-1\\
(\tilde p_1(\varepsilon)-1)(\tilde p_1(\varepsilon)-2) & \cdots &
(\tilde p_n(\varepsilon)-1)(\tilde p_n(\varepsilon)-2)\\
\vdots & \cdots & \vdots\\
(\tilde p_1(\varepsilon)-1)\cdots(\tilde p_1(\varepsilon)-n+1)
& \cdots &
(\tilde p_n(\varepsilon)-1)\cdots(\tilde p_n(\varepsilon)-n+1)
\end{pmatrix}\\
&=
\det\begin{pmatrix}
1 & \cdots & 1\\
\tilde p_1(\varepsilon)-1 & \cdots & \tilde p_n(\varepsilon)-1\\
\tilde p_1(\varepsilon)^2-3\tilde p_1(\varepsilon)+2 &
\cdots &
\tilde p_n(\varepsilon)^2-3\tilde p_n(\varepsilon)+2\\
\vdots & \cdots & \vdots\\
\tilde p_1(\varepsilon)^{n-1}+O(\tilde p_1(\varepsilon)^{n-2})
&
\cdots
&
\tilde p_n(\varepsilon)^{n-1}+O(\tilde p_n(\varepsilon)^{n-2})
\end{pmatrix}\\
&=
\det\begin{pmatrix}
1 & \tilde p_1(\varepsilon) & \cdots &
\tilde p_1(\varepsilon)^{n-1}\\
1 & \tilde p_2(\varepsilon) & \cdots &
\tilde p_2(\varepsilon)^{n-1}\\
\vdots & \vdots & \cdots & \vdots\\
1 & \tilde p_n(\varepsilon) & \cdots &
\tilde p_n(\varepsilon)^{n-1}
\end{pmatrix}
\cdot
\det\begin{pmatrix}
1 & -1 & 2 & -6 & \cdots\\
0 & 1 & -3 & 11 & \cdots\\
0 & 0 & 1 & -6 & \cdots\\
0 & 0 & 0 & 1 & \cdots\\
\vdots & \vdots & \vdots & \vdots & \ddots
\end{pmatrix}\\
&=
\prod_{1\le i<k\le n}
\bigl(\tilde p_k(\varepsilon)-\tilde p_i(\varepsilon)\bigr)
\neq0.
\end{align*}
This is non-zero by the injectivity of \(e_d\). Combining the above computations, \eqref{20} becomes
\begin{align}\label{22}
&
\det\bigl(
v_{\tilde p_0(\varepsilon)}(\varepsilon,u),
\dots,
v_{\tilde p_n(\varepsilon)}(\varepsilon,u)
\bigr)
\cdot
\prod_{i=1}^{n}\tilde p_i(\varepsilon)
\cdot
\prod_{1\le i<k\le n}
\bigl(\tilde p_k(\varepsilon)-\tilde p_i(\varepsilon)\bigr)\notag
\\
&\hspace{6cm}\cdot\,
t^{\tilde p_1(\varepsilon)+\cdots+\tilde p_n(\varepsilon)-\frac{n(n+1)}2}.
\end{align}
Since
\[
\det\bigl(
v_{\tilde p_0(\varepsilon)}(\varepsilon,u),
\dots,
v_{\tilde p_n(\varepsilon)}(\varepsilon,u)
\bigr)\neq0,
\]
the coefficient of the monomial in \eqref{22} is non-zero. By \eqref{58}, the exponent of \(t\) in \eqref{22} satisfies
\[
\tilde{Q}_{\mathrm{good}}(\varepsilon):=\tilde p_1(\varepsilon)+\cdots+\tilde p_n(\varepsilon)-\frac{n(n+1)}2
\le
Q_{\mathrm{good}}.
\]
Knowing that the contribution \eqref{22} is non-zero is not enough. We must also show that the exponent \(\tilde Q_{\mathrm{good}}(\varepsilon)\) is unique in the expansion \eqref{19}, so that no cancellation can occur with other summands.

By the minimality of the perturbed pivot exponents $0=\tilde p_0(\varepsilon)<\cdots<\tilde p_n(\varepsilon)$, every tuple of exponents $\tilde p_0'(\varepsilon),\dots,\tilde p_n'(\varepsilon)$ with non-zero determinant in \eqref{19} satisfies, after re-ordering,
\[
\tilde p_i'(\varepsilon)\ge\tilde p_i(\varepsilon),
\qquad
0\le i\le n.
\]
Hence, if
\[
\tilde p_0'(\varepsilon)+\cdots+\tilde p_n'(\varepsilon)
=
\tilde p_0(\varepsilon)+\cdots+\tilde p_n(\varepsilon),
\]
then necessarily
\[
\tilde p_i'(\varepsilon)=\tilde p_i(\varepsilon),
\qquad
0\le i\le n.
\]
Since each difference $\tilde p_i'(\varepsilon)-\tilde{p_i}(\varepsilon)$ is non-negative, equality of the sums forces equality coordinate wise. Together with the injectivity of \(e_d\), this implies that no other summand in \eqref{19} contributes to the monomial of degree \(\tilde Q_{\mathrm{good}}(\varepsilon)\). Combining this with \eqref{22}, we conclude that the monomial of degree \(\tilde Q_{\mathrm{good}}(\varepsilon)\) is unique in the expansion \eqref{19} and has non-zero coefficient.

\textbf{Step 2. Uniformisation via compactness.}

We now establish a uniform lower bound for the preceding non-vanishing coefficient. Since \(S^n\) is compact and the maximum of finitely many continuous functions is continuous, the infimum is attained. Hence \(\inf\) may be replaced by \(\min\). By Lemma \ref{lem5}, we have
\begin{align}\label{24}
\min_{\vert c\vert =1}\max_{\alpha\in \lbrace 0,\beta_1,\dots,\beta_n\rbrace}\bigl|
c\cdot(u^\alpha\Lambda_\alpha(\varepsilon))
\bigr|
\ge \mu_1>0.
\end{align}
Arguing as in the proof of Lemma~\ref{lem3}, choose $M_P>0$ such that, for every
\(0<\varepsilon\le \varepsilon_0\), every \(u\in U_0\), and every
\(|\alpha|\le N\), the coefficient vector $u^\alpha\Lambda_\alpha(\varepsilon)$ satisfies
\begin{align}\label{25}
|u^\alpha\Lambda_\alpha(\varepsilon)|\le M_P.
\end{align}
Such an \(M_P\) exists because \(U_0\) is compact and
\(\|f_\varepsilon-f\|_{C^N(\overline B)}<\varepsilon_0\). Indeed,
\[
\vert \partial_\alpha f_{\varepsilon,i}(0)\vert\leq \Vert f_\varepsilon\Vert_{C^N(\overline{B})}\leq \Vert f\Vert_{C^N(\overline{B})}+\varepsilon_0.
\]
Let
\(\mathcal A\) denote the finite dimensional space of coefficient data consisting of $A=(a_\alpha)_{|\alpha|\le N}$, where $a_\alpha\in \mathbb R^{n+1}$. Define
\begin{align*}
K
=
\lbrace
A=(a_\alpha)_{|\alpha|\le N}\in\mathcal A:
|a_\alpha|\le M_P\ &\text{for all }|\alpha|\le N,\\
&\min_{|c|=1}
\max_{\alpha\in\{0,\beta_1,\dots,\beta_n\}}
|c\cdot a_\alpha|
\ge \mu_1
\rbrace.
\end{align*}
Then \(K\) is compact. Indeed, it's clearly bounded by construction. To see that $K$ is closed, let
\[
G(A)=\min_{|c|=1}
\max_{\alpha\in\{0,\beta_1,\dots,\beta_n\}}
|c\cdot a_\alpha|\quad\text{and}\quad H(A,c)=\max_{\alpha\in\{0,\beta_1,\dots,\beta_n\}}
|c\cdot a_\alpha|.
\]
We already know that \(H(A,c)\) is continuous in \((A,c)\). Since \(c\) ranges over the compact set \(S^n\), it follows that $G(A)=\min_{|c|=1}H(A,c)$ is continuous in \(A\). Then
\[
\lbrace A:\, G(A)\geq \mu_1\rbrace=G^{-1}([\mu_1,\infty))
\]
is closed because $[\mu_1,\infty)$ is closed. Therefore
\[
K
=
(\bigcap_{|\alpha|\le N}\lbrace|a_\alpha|\le M_P\rbrace)
\cap
\{A:G(A)\ge \mu_1\}
\]
is the intersection of finitely many closed sets, and therefore closed.

For every \(0<\varepsilon\le\varepsilon_0\) and every \(u\in U_0\), the actual coefficient data
\[
A_{\varepsilon,u}
=
\bigl(u^\alpha\Lambda_\alpha(\varepsilon)\bigr)_{|\alpha|\le N}
\]
belongs to \(K\), by (\ref{24}) and (\ref{25}). For \(A\in K\), define the polynomial family
\[
P_{A,i}(t)
=
\sum_{|\alpha|\le N}
a_\alpha^{(i)}t^{e_d(\alpha)},
\quad
0\le i\le n,
\]
where $a_\alpha=(a_\alpha^{(0)},\dots,a_\alpha^{(n)})$. Let
\[
W_A^P(t)
=
W(P_{A,0},\dots,P_{A,n})(t).
\]
Since each \(P_{A,i}\) has degree bounded independently of \(A\), there is a fixed integer \(Q_{\max}\) such that
\begin{align}\label{93}
W_A^P(t)
=
\sum_{q=0}^{Q_{\max}} b_q(A)t^q.
\end{align}
Each coefficient \(b_q(A)\) is a polynomial expression in the finitely many vectors \(a_\alpha\), and hence depends continuously on \(A\). Define
\[
\text{Coeff}_{\leq Q_{\text{good}}}(W_A^P)
=
(b_0(A),b_1(A),\dots,b_{Q_{\mathrm{good}}}(A))
\in \mathbb R^{Q_{\mathrm{good}}+1}.
\]

For every \(A\in K\), the defining condition of \(K\), arguing as in the proof of Lemma~\ref{lem5}, yields the quantitative linear independence of $a_0,a_{\beta_1},\dots,a_{\beta_n}$. In particular,
\[
\det(a_0,a_{\beta_1},\dots,a_{\beta_n})\neq0.
\]
Indeed, assume for contradiction that $a_0,a_{\beta_1},\dots,a_{\beta_n}$ do not span $\mathbb{R}^{n+1}$. Then their span is a proper subspace $V\subsetneq \mathbb{R}^{n+1}$. Therefore there exists a non-zero vector $c\in V^\perp$ with $\vert c\vert=1$ such that 
\[
c\cdot a_\alpha=0,\quad\forall\,\alpha\in\lbrace 0,\beta_1,\dots,\beta_n\rbrace.
\]
This contradicts the fact that
\[
\min_{|c|=1}
\max_{\alpha\in\{0,\beta_1,\dots,\beta_n\}}
|c\cdot a_\alpha|
\ge \mu_1.
\]

Applying the same greedy construction with \(a_\alpha\) in place of
\(u^\alpha\Lambda_\alpha(\varepsilon)\), let
\[
0=\tilde p_0(A)<\tilde p_1(A)<\cdots<\tilde p_n(A)
\]
be the resulting pivot exponents. Since
\(a_0,a_{\beta_1},\dots,a_{\beta_n}\) are linearly independent and
\(e_d(0)=0\), we have \(\tilde p_0(A)=0\), and by minimality,
\[
\tilde p_i(A)\le p_i,
\quad 0\le i\le n.
\]
By the same calculation as that leading to \eqref{22}, the pivot tuple
contributes to \(W_A^P\) a non-zero monomial of degree
\[
\tilde p_1(A)+\cdots+\tilde p_n(A)
-\frac{n(n+1)}2
\le Q_{\mathrm{good}}.
\]
By the same greedy minimality argument as in Step~1, this contribution
is unique in that degree. Hence
\[
\operatorname{Coeff}_{\le Q_{\mathrm{good}}}(W_A^P)\neq0
\quad\text{for every }A\in K.
\]

Therefore the continuous function
\[
A\mapsto |\text{Coeff}_{\leq Q_{\text{good}}}(W_A^P)|=\Big(\sum_{q=0}^{Q_{\text{good}}}\vert b_q(A)\vert^2\Big)^{1/2}
\]
is strictly positive on the compact set \(K\). Hence it attains a positive
minimum:
\[
B_1
:=
\min_{A\in K}|\text{Coeff}_{\leq Q_{\text{good}}}(W_A^P)|
>0.
\]
Consequently, for every \(0<\varepsilon\le\varepsilon_0\) and every
\(u\in U_0\),
\[
\left|
\bigl(
b_0(A_{\varepsilon,u}),
\dots,
b_{Q_{\mathrm{good}}}(A_{\varepsilon,u})
\bigr)
\right|
\ge B_1.
\]
Equivalently,
\begin{align}\label{27}
\left|
\operatorname{Coeff}_{\le Q_{\mathrm{good}}}
(W_{\varepsilon,u}^P)
\right|
\ge B_1.
\end{align}
The constant \(B_1\) is
independent of \(0<\varepsilon\le\varepsilon_0\), \(u\in U_0\), and \(t_*\). 

\textbf{Step 3. Passage to a Wronskian lower bound.}

Now let $J=[0,t_*]$. Since $J$ will later be shrunk in order to control the remainder term, we require the constants in the lower bound for $W_{\varepsilon,u}^P$ to be independent of $t_*$. We rescale $t$ by writing $t=t_*s$, $s\in [0,1]$, so
\[
p(t_*s):=\sum_{q\leq Q_{\text{good}}}b_q(\varepsilon,u)t_*^qs^q=t_*^{Q_{\text{good}}}\sum_{q\leq Q_{\text{good}}}b_q(\varepsilon,u)t_*^{q-Q_{\text{good}}}s^q.
\]
The coefficient vector of the rescaled polynomial is
\[
(b_0(\varepsilon,u)t_*^{-Q_{\text{good}}},b_1(\varepsilon,u)t_*^{1-Q_{\text{good}}},\dots,b_{Q_{\text{good}}}(\varepsilon,u)).
\]
Since $0<t_*\leq 1$ and $t_*^{q-Q_{\text{good}}}\geq 1$, its Euclidean norm is at least
\[
(b_0^2(\varepsilon,u)+\cdots+b_{Q_{\text{good}}}^2(\varepsilon,u))^{1/2},
\]
which is bounded below by $B_1$ in view of (\ref{27}). Hence, by the equivalence of the coefficient norm and the supremum norm on the finite dimensional space of polynomials of degree at most $Q_{\mathrm{good}}$, there exists a constant $C_0>0$ independent of $t_*$, $\varepsilon$ and $u$ such that
\begin{align}\label{26}
\sup_{t\in J}\Big\vert \sum_{q\leq Q_{\text{good}}}b_q(\varepsilon,u)t^q\Big\vert&=t_*^{Q_{\text{good}}}\cdot\sup_{s\in [0,1]}\Big\vert \sum_{q\leq Q_{\text{good}}}b_q(\varepsilon,u)t_*^{q-Q_{\text{good}}}s^q\Big\vert\notag\\
&\geq C_0 B_1 t_*^{Q_{\text{good}}}.
\end{align}

Without the quantitative control provided by $Q_{\mathrm{good}}$, it would still be possible to separate the dependence on $t_*$ from the constants, but only at the cost of factoring out the maximal exponent $Q_{\mathrm{max}}$. As we shall see later, the smoothness requirement on the manifold is determined precisely by the exponent that can be factored out. Consequently, to minimize the smoothness assumption, one must minimize this exponent. With the information available in the present framework, $Q_{\mathrm{good}}$ is essentially the smallest exponent that can be achieved.

For the remaining terms of $W_{\varepsilon,u}^P$, there exists $C_1>0$, independent of $t_*$, $\varepsilon$ and $u$, such that
\[
\Big\vert \sum_{Q_{\text{good}}<q\leq Q_{\text{max}}}b_q(\varepsilon,u)t^q\Big\vert \leq C_1t_*^{Q_{\text{good}}+1}
\]
for $t\in[0,t_*]$, because $t^q\le t_*^{Q_{\mathrm{good}}+1}$ for $q>Q_{\mathrm{good}}$, $t_*\le1$, and the coefficients $b_q(\varepsilon,u)$ are uniformly bounded. Set $t_0:=\min\lbrace 1,C_0B_1/(2C_1)\rbrace$. Since $C_0$, $B_1$ and $C_1$ are independent of $t_*$, for every $0<t_*\leq t_0$, we have
\[
C_1t_*^{Q_{\text{good}}+1}\leq \frac{1}{2}C_0B_1t_*^{Q_{\text{good}}}.
\]
Hence, combining (\ref{26}) with the above estimate and applying the triangle inequality, we obtain
\[
\sup_{t\in J}\vert W_{\varepsilon,u}^P(t)\vert\geq \frac{1}{2}C_0B_1t_*^{Q_{\text{good}}}.
\]
Therefore, for every $0<\varepsilon\leq \varepsilon_0$ and $u\in U_0$, there exists $t_{\varepsilon,u}\in J$ such that
\begin{align*}
\vert W_{\varepsilon,u}^P(t_{\varepsilon,u})\vert \geq \frac{1}{2}C_0B_1t_*^{Q_{\text{good}}},
\end{align*}
where the supremum is attained since $W_{\varepsilon,u}^P$ is continuous on the compact interval $J$. Denote
\[
c_P:=\frac{1}{2}C_0B_1.
\]
This completes the proof.
\end{proof}
\begin{rem}
The greedy construction is needed because the contribution associated with
\((\beta_1,\dots,\beta_n)\) need not be isolated at degree
\(Q_{\mathrm{good}}\). Indeed, from
\[
\sum_{i=1}^n e_d(\beta_i)=\sum_{i=1}^n e_d(\alpha_i),
\]
linearity and injectivity of \(e_d\) only imply
\[
\sum_{i=1}^n\beta_i=\sum_{i=1}^n\alpha_i,
\]
not equality of the multi-sets \(\{\beta_i\}\) and \(\{\alpha_i\}\). For
example,
\[
(2,0)+(0,2)=(1,1)+(1,1).
\]
Thus injectivity alone does not rule out cancellation at degree
\(Q_{\mathrm{good}}\) uniformly over all perturbed manifolds. The greedy
construction avoids this by selecting the minimal degree at which a
non-zero Wronskian coefficient can occur.
\end{rem}
\subsubsection{Control of the remainder term}
Proposition \ref{prop2} provides a uniform lower bound for the
Wronskian of the polynomial part. To transfer this lower bound to the full fibered map, it remains to control the Taylor remainder arising from composition with the monomial curve $\gamma_u$. We begin with a simple uniform continuity lemma.
\begin{lem}\label{lem6}
Let \(f\in C^N(\overline B)\), and suppose that \(\varepsilon_0>0\) has been chosen so that
\[
\|f_\varepsilon-f\|_{C^N(\overline B)}\le 1,
\quad 0<\varepsilon\le\varepsilon_0.
\]
Then there exists \(\delta>0\) such that, for every
\(0<\varepsilon\le \varepsilon_0\), every \(|x|\leq\delta\), and every
multi-index \(\alpha\) with \(|\alpha|\le N\), we have
\[
\left|\partial_\alpha f_\varepsilon(x)
-
\partial_\alpha f_\varepsilon(0)\right|
\le 3.
\]
\end{lem}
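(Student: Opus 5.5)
The plan is a three-term triangle inequality. The two perturbation terms are controlled by the hypothesis $\|f_\varepsilon-f\|_{C^N(\overline B)}\le 1$. The middle term is controlled by uniform continuity of the derivatives of the fixed map $f$. The point is that $\delta$ is chosen using $f$ alone, so it is automatically independent of $\varepsilon$. We never need any equicontinuity of the analytic family $f_\varepsilon$, which we do not have, since no control beyond order $N$ is available.

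First, fix $\delta$ from $f$. Since $f\in C^N(\overline B)$, each $\partial_\alpha f$ with $|\alpha|\le N$ is continuous on the compact set $\overline B$, hence uniformly continuous there. There are only finitely many multi-indices with $|\alpha|\le N$. So, taking the minimum over them, we obtain $\delta>0$ such that $\overline{B(0,\delta)}\subset\overline B$ (shrink $\delta$ below the radius of $B$ if needed) and
\[
|\partial_\alpha f(x)-\partial_\alpha f(0)|\le 1
\quad\text{for all } |x|\le\delta,\ |\alpha|\le N .
\]
The containment $\overline{B(0,\delta)}\subset\overline B$ ensures that all the quantities below are defined.

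Second, for $0<\varepsilon\le\varepsilon_0$, $|x|\le\delta$ and $|\alpha|\le N$, split
\[
\partial_\alpha f_\varepsilon(x)-\partial_\alpha f_\varepsilon(0)
=\bigl(\partial_\alpha f_\varepsilon(x)-\partial_\alpha f(x)\bigr)
+\bigl(\partial_\alpha f(x)-\partial_\alpha f(0)\bigr)
+\bigl(\partial_\alpha f(0)-\partial_\alpha f_\varepsilon(0)\bigr).
\]
By the definition of $\|\cdot\|_{C^N(\overline B)}$ as a supremum over $\overline B$ and over $|\alpha|\le N$, the first and third terms each have Euclidean norm at most $\|f_\varepsilon-f\|_{C^N(\overline B)}\le 1$. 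The middle term is at most $1$ by the choice of $\delta$. The triangle inequality then gives the bound $3$.

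There is no genuine obstacle here. The only point requiring care is the one already noted: $\delta$ must be selected from $f$ before $\varepsilon$ is considered, so that it serves all $0<\varepsilon\le\varepsilon_0$ at once.
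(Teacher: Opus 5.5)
Your proposal is correct and follows the paper's argument essentially verbatim. It uses the same three-term triangle inequality: the two perturbation terms are bounded by the $C^N$ hypothesis, and the middle term by uniform continuity of the finitely many $\partial_\alpha f$ on $\overline B$, with $\delta$ chosen from $f$ alone. Your added care about $\overline{B(0,\delta)}\subset\overline B$, and about treating $|x|\le\delta$ rather than the paper's $|x|<\delta$, only tidies the same proof.
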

\begin{proof}
By the triangle inequality,
\begin{align*}
|\partial_\alpha f_\varepsilon(x)-\partial_\alpha f_\varepsilon(0)|
\le
|\partial_\alpha f_\varepsilon(x)-\partial_\alpha f(x)|
&+
|\partial_\alpha f(x)-\partial_\alpha f(0)|\\
&+
|\partial_\alpha f(0)-\partial_\alpha f_\varepsilon(0)|.
\end{align*}
The first and third terms are bounded by \(1\). Since there are only finitely many multi-indices \(\alpha\) with \(|\alpha|\le N\), and since each \(\partial_\alpha f\) is uniformly continuous on \(\overline B\), we may choose \(\delta>0\) such that the middle term is bounded by \(1\) for all such \(\alpha\) whenever \(|x|<\delta\). Hence the total is bounded by \(3\).
\end{proof}
Now we move on to establish a uniform $C^n$ estimate for the fibered remainder term.
\begin{prop}[Uniform estimate for the fibered remainder]
\label{prop3}
Let \(E_{\varepsilon,u}\) be the fibered remainder term defined in \eqref{14}. Let \(U_0\subset(\mathbb R^\times)^m\) be compact,
and let \(0<t_*\le 1\). There exist constants
\[
\delta>0
\quad\text{and}\quad
C_F>0,
\]
independent of \(0<\varepsilon\le\varepsilon_0\), \(u\in U_0\), and \(t_*\), such that for every $0<\varepsilon\le\varepsilon_0$ and $u\in U_0$, one has
\[
\|E_{\varepsilon,u}\|_{C^n([0,t_*])}
\le
C_F t_*^{Na_1-n},
\]
whenever
\[
|\gamma_u(t)|\le\delta,
\quad 0\le t\le t_*.
\]
\end{prop}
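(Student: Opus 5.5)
The plan is to write the fibered remainder as $E_{\varepsilon,u,i}(t)=R_{\varepsilon,i}(\gamma_u(t))$ and to estimate the $t$-derivatives of order $0\le k\le n$ by the chain rule (Faà di Bruno). Two ingredients are needed: a pointwise bound on the partial derivatives of the Taylor remainder $R_{\varepsilon,i}$ in terms of powers of $|x|$, and a bound on the $t$-derivatives of the monomial curve $\gamma_u$. The gain $t^{Na_1}$ comes from the vanishing of the remainder to order $N$ at $0$. Each $t$-derivative costs at most one power of $t$, and since $k\le n\le N$ we never need more than $N$ derivatives of $f_\varepsilon$. This matters because Lemma~\ref{lem6} controls only those derivatives.

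\textbf{Step 1: derivatives of the remainder.} Fix a multi-index $\beta$ with $|\beta|\le n\le N$. Differentiating the degree-$N$ Taylor polynomial of $f_{\varepsilon,i}$ gives the degree-$(N-|\beta|)$ Taylor polynomial of $\partial_\beta f_{\varepsilon,i}$. Hence $\partial_\beta R_{\varepsilon,i}$ is the Taylor remainder of $\partial_\beta f_{\varepsilon,i}$ of order $N-|\beta|$. When $|\beta|<N$, I use the integral form of the remainder, subtracting the top-order term at $0$:
\[
\partial_\beta R_{\varepsilon,i}(x)=\sum_{|\alpha|=N-|\beta|}\frac{|\alpha|}{\alpha!}x^\alpha\int_0^1(1-s)^{|\alpha|-1}\bigl(\partial_{\alpha+\beta}f_{\varepsilon,i}(sx)-\partial_{\alpha+\beta}f_{\varepsilon,i}(0)\bigr)\,ds .
\]
When $|\beta|=N$, we simply have $\partial_\beta R_{\varepsilon,i}(x)=\partial_\beta f_{\varepsilon,i}(x)-\partial_\beta f_{\varepsilon,i}(0)$. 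In both cases only derivatives of order exactly $N$ appear. Taking $\delta$ from Lemma~\ref{lem6}, shrunk if necessary so that the $\delta$-ball lies in $B$, we get $|\partial_\beta R_{\varepsilon,i}(x)|\le C_1|x|^{N-|\beta|}$ for $|x|\le\delta$. Here $C_1$ depends only on $N,m$, and it is uniform in $\varepsilon\le\varepsilon_0$ by Lemma~\ref{lem6} (we may assume $\varepsilon_0\le1$).

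\textbf{Step 2: the curve.} Set $C_U:=\max_{u\in U_0}\max_j|u_j|$, which is finite by compactness of $U_0$. Since $a_1<a_2<\dots<a_m$ and $0\le t\le t_*\le1$, we have $|\gamma_u(t)|\le \sqrt m\,C_U t^{a_1}$. For $1\le r\le n$ we also have $|\frac{d^r}{dt^r}(u_jt^{a_j})|\le C_U a_m^{r}\, t^{a_j-r}\le C_2 t^{a_1-r}$, using $a_j\ge a_1\ge d^m\ge n\ge r$. The constant $C_2$ depends on $d$ and $U_0$ but not on $\varepsilon$, $u$ or $t_*$.

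\textbf{Step 3: Faà di Bruno and bookkeeping.} For $0\le k\le n$, $\frac{d^k}{dt^k}R_{\varepsilon,i}(\gamma_u(t))$ is a finite sum, with universal combinatorial coefficients, of terms of the form
\[
\partial_\beta R_{\varepsilon,i}(\gamma_u(t))\prod_{s=1}^{|\beta|}\frac{d^{r_s}}{dt^{r_s}}\gamma_{u,j_s}(t),\qquad 1\le|\beta|\le k,\quad r_1+\dots+r_{|\beta|}=k,\ r_s\ge1 .
\]
For $k=0$ there is just the term $R_{\varepsilon,i}(\gamma_u(t))$. By Steps 1 and 2, each term is bounded by $C\,t^{a_1(N-|\beta|)}\,t^{|\beta|a_1-k}=C\,t^{Na_1-k}$. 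Since $Na_1-k\ge Na_1-n\ge0$ and $t\le t_*\le1$, this is at most $C\,t_*^{Na_1-n}$. Summing over the finitely many terms, over $k\le n$ and over $0\le i\le n$ (with $R_{\varepsilon,0}\equiv0$) gives $C_F$.

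I expect the main obstacle to be Step 1: making sure the bound on $\partial_\beta R$ genuinely uses only derivatives up to order $N$ of $f_\varepsilon$, with constants independent of $\varepsilon$. Analyticity of $f_\varepsilon$ gives no uniform control beyond order $N$, and a naive Lagrange remainder would need order $N+1$. The integral form with $\partial_{\alpha+\beta}f_\varepsilon(sx)-\partial_{\alpha+\beta}f_\varepsilon(0)$ together with Lemma~\ref{lem6} is what resolves this. A minor point is that the hypothesis $|\gamma_u(t)|\le\delta$ is exactly what places the segment $\{s\gamma_u(t):0\le s\le1\}$ inside the region where Lemma~\ref{lem6} applies.
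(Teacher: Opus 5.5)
Your proposal is correct and follows essentially the same route as the paper. Both arguments write $\partial_\beta R_{\varepsilon,i}$ in integral form with the order-$N$ derivatives at $0$ subtracted, bound it by $|x|^{N-|\beta|}$ via Lemma~\ref{lem6}, combine this with $|\gamma_u(t)|\ll t^{a_1}$ and $|\gamma_{u,j}^{(r)}(t)|\ll t^{a_1-r}$, and apply the Fa\`a di Bruno chain rule to get $t^{Na_1-k}\le t_*^{Na_1-n}$. Your explicit requirement that the $\delta$-ball lie in $B$, and your restriction to $r_s\ge 1$, only tidy up the paper's argument.
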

\begin{proof}
For $\vert \beta\vert\leq n$, we write $D^\beta f=\partial_\beta f$. Hence
\[
D^\beta f_{\varepsilon,i}(x)=\sum_{\vert\alpha\vert\leq N-\vert\beta\vert}\frac{\partial_{\alpha+\beta}f_{\varepsilon,i}(0)}{\alpha!}x^\alpha+D^\beta R_{\varepsilon,i}(x).
\]
Differentiating the Taylor expansion with integral remainder gives
\begin{align*}
&D^\beta R_{\varepsilon,i}(x)=\\
&\sum_{\vert \alpha\vert=N-\vert \beta\vert}\frac{N-\vert \beta\vert}{\alpha!}x^\alpha\int_{0}^{1}(1-s)^{N-\vert \beta\vert-1}\cdot [\partial_{\alpha+\beta}f_{\varepsilon,i}(sx)-\partial_{\alpha+\beta}f_{\varepsilon,i}(0)]ds.
\end{align*}
For the multi-indices appearing above, set \(\tau=\alpha+\beta\). There are only finitely many such \(\tau\), and all satisfy \(|\tau|= N\). By Lemma \ref{lem6}, there exists $\delta>0$ such that for every $\vert x\vert\leq \delta$, and every multi-index $\tau$ with $\vert \tau\vert= N$, we have
\[
\vert\partial_\tau f_{\varepsilon,i}(sx)-\partial_\tau f_{\varepsilon,i}(0)\vert \leq 3,
\]
uniformly for $0\leq s\leq 1$ and $0<\varepsilon\leq \varepsilon_0$. Therefore, for $\vert\beta\vert<N$, we have
\begin{align*}
\vert D^\beta R_{\varepsilon,i}(x)\vert&\leq \sum_{\vert\alpha\vert=N-\vert \beta\vert}\frac{N-\vert\beta\vert}{\alpha!}\vert x^\alpha\vert\cdot\int_{0}^{1}3(1-s)^{N-\vert \beta\vert-1}\, ds\\
&=3\sum_{\vert \alpha\vert=N-\vert \beta\vert}\frac{N-\vert \beta\vert}{\alpha!}\vert x^\alpha\vert\cdot \frac{1}{N-\vert \beta\vert}=3\sum_{\vert \alpha\vert=N-\vert \beta\vert}\frac{\vert x^\alpha\vert}{\alpha!}.
\end{align*}
Since $\vert x^\alpha\vert \leq \vert x\vert^{\vert \alpha\vert}=\vert x\vert^{N-\vert\beta\vert}$, we get
\[
\vert D^\beta R_{\varepsilon,i}(x)\vert\leq 3\bigl(\sum_{\vert \alpha\vert= N-\vert\beta\vert}\frac{1}{\alpha!}\bigr)\cdot \vert x\vert^{N-\vert \beta\vert}.
\]
Thus, there exists $C_{N,\beta}>0$ such that 
\begin{align}\label{32}
\vert D^\beta R_{\varepsilon,i}(x)\vert\leq C_{N,\beta}\vert x\vert^{N-\vert \beta\vert}
\end{align}
uniformly for $0<\varepsilon\leq \varepsilon_0$. If \(|\beta|=N\), then by Lemma \ref{lem6}, we have
\[
\vert D^\beta R_{\varepsilon,i}(x)\vert
=
\vert\partial_\beta f_{\varepsilon,i}(x)
-
\partial_\beta f_{\varepsilon,i}(0)\vert\leq 3.
\]
Thus, \eqref{32} holds for all \(|\beta|\le N\) by choosing \(C_{N,\beta}\geq 3\). Now for
\[
\gamma_u(t)=(u_1 t^{a_1},\dots, u_m t^{a_m}),\quad a_j=d^{j-1}+d^m,
\]
we have $a_1=1+d^m<a_2<\cdots<a_m$. Since $U_0$ is compact, there exists
\[
M_{U_0}:=\sup_{u\in U_0}\max_{1\leq j\leq m}\vert u_j\vert<\infty. 
\]
Since $0\leq t\leq t_*\leq 1$ and $a_j\geq a_1$, we have
\[
\vert \gamma_u(t)\vert^2=\sum_{j=1}^{m}u_j^2 t^{2a_j}\leq \sum_{j=1}^{m}M_{U_0}^2\cdot t^{2a_1}.
\]
This means
\begin{align}\label{28}
\vert \gamma_u(t)\vert\leq \sqrt{m}M_{U_0}\cdot t^{a_1}=C_{U_0}t^{a_1}.
\end{align}
Let $1\leq j\leq m$, since $\gamma_{u,j}(t)=u_jt^{a_j}$, we have $\gamma_{u,j}^{(r)}(t)=u_j(a_j)_rt^{a_j-r}$, where $(a_j)_r=a_j(a_j-1)\cdots(a_j-r+1)$. So
\[
\vert \gamma_{u,j}^{(r)}(t)\vert \leq M_{U_0}(a_j)_r\cdot t^{a_1-r}.
\]
Since there are only finitely many $j$, we have
\[
\max_{1\leq j\leq m}(a_j)_r<\infty.
\]
Let $C_r:=M_{U_0}\max_{1\leq j\leq m}(a_j)_r$. Then uniformly for all $u\in U_0$, we have
\begin{align}\label{29}
\vert \gamma_{u,j}^{(r)}(t)\vert \leq C_rt^{a_1-r}.
\end{align}
Recall that $E_{\varepsilon,u,i}(t)=R_{\varepsilon,i}(\gamma_u(t))$.
\begin{itemize}
\item We have
\[
E_{\varepsilon,u,i}'(t)=\frac{d}{dt}R_{\varepsilon,i}(\gamma_u(t))=\sum_{j=1}^{m}\partial_jR_{\varepsilon,i}(\gamma_u(t))\cdot \gamma_{u,j}'(t).
\]
Each term inside the summation has the form $D^\beta R_{\varepsilon,i}(\gamma_u(t))\cdot \gamma_{u,j}'(t)$, where $\vert \beta\vert=1$. More explicitly, $\beta=e_j=(0,\dots,0,1,0,\dots,0)$ where $1$ appears at the $j$-th spot.
\item For the second derivative,
\begin{align*}
E_{\varepsilon,u,i}''(t)=\sum_{j_1,j_2=1}^{m}\partial_{j_1,j_2}&R_{\varepsilon,i}(\gamma_u(t))\cdot \gamma_{u,j_2}'(t)\cdot \gamma_{u,j_1}'(t)+\\
&\sum_{j=1}^{m}\partial_{j}R_{\varepsilon,i}(\gamma_u(t))\cdot \gamma_{u,j}''(t),
\end{align*}
where $\partial_{j_1,j_2}=D^{e_{j_1}+e_{j_2}}$. So each term in the sums takes the form
\[
D^\beta R_{\varepsilon,i}(\gamma_u(t))\cdot \gamma_{u,j_1}^{(r_1)}(t)\cdot \gamma_{u,j_2}^{(r_2)}(t),\quad \vert \beta\vert=2,\quad r_1+r_2=2,
\]
or
\[
D^\beta R_{\varepsilon,i}(\gamma_u(t))\cdot \gamma_{u,j_1}^{(r_1)}(t),\quad \vert \beta\vert=1,\quad r_1=2.
\]
\item In general, for $k$-th derivative, where $0\leq k\leq n$, each term takes the form
\begin{align}\label{30}
D^\beta R_{\varepsilon,i}(\gamma_u(t))\prod_{p=1}^{\vert \beta\vert}\gamma_{u,j_p}^{(r_p)}(t),
\end{align}
where $r_1+\cdots+r_{\vert \beta\vert}=k$ and $0\leq \vert \beta\vert\leq k$. 
\end{itemize}
Now, combine the bounds in (\ref{32}), (\ref{28}) and (\ref{29}) with the fact that, for every $0\leq k\leq n$, there are only finitely many terms in the sum for the derivative, taking form in (\ref{30}). That means, for some $C_E>0$ depending only on \(m\) and \(n\), we have
\begin{align}\label{31}
\vert E_{\varepsilon,u,i}^{(k)}(t)\vert &\leq C_E\cdot \max_{0\leq \vert \beta\vert\leq k}\vert D^\beta R_{\varepsilon,i}(\gamma_u(t))\cdot \prod_{p=1}^{\vert \beta\vert}\gamma_{u,j_p}^{(r_p)}(t)\vert\notag\\
&\leq C_E\cdot \max_{0\leq \vert \beta\vert \leq k}\vert C_{N,\beta}\cdot\vert \gamma_u(t)\vert^{N-\vert\beta\vert}\cdot \prod_{p=1}^{\vert \beta\vert}C_{r_p}t^{a_1-r_p}\vert\notag\\
&\leq C_F\cdot \max_{0\leq \vert \beta\vert\leq k}\vert t^{a_1(N-\vert \beta\vert)}\cdot t^{\sum_{p=1}^{\vert \beta\vert}(a_1-r_p)}\vert\notag\\
&=C_F\cdot \max_{0\leq \vert \beta\vert\leq k}\vert t^{a_1(N-\vert \beta\vert)}\cdot t^{\vert \beta\vert a_1-k}\vert\notag\\
&=C_F\cdot t^{a_1N-k},
\end{align}
for some $C_F>0$, uniformly for $0<\varepsilon\leq \varepsilon_0$ and $u\in U_0$. This directly implies, on $J=[0,t_*]$, we have
\[
\Vert E_{\varepsilon,u}\Vert_{C^n(J)}\leq C_F\cdot t_*^{Na_1-n}.\qedhere
\]
\end{proof}
Proposition \ref{prop3} provides uniform \(C^n\) control of the fibered remainder term. We now use this estimate to bound the difference between the Wronskian of the full fibered map and that of the polynomial part arising from the Taylor expansion of order \(N\).
\begin{prop}[Wronskian comparison]\label{prop4}
Let \(U_0\subset(\mathbb R^\times)^m\) be compact, let $\delta>0$ be as in Proposition~\ref{prop3} and let \(0<t_*\le1\) be such that $\vert\gamma_u(t)\vert\leq \delta$ for all $u\in U_0$ and $0\leq t\leq t_*$. Let
\[
\phi_{\varepsilon,u}(t)
=
P_{\varepsilon,u}(t)+E_{\varepsilon,u}(t)
\]
be the full fibered map defined in \eqref{14}. Then there exist constants \(C_W>0\) and $C_W'>0$, independent of \(0<\varepsilon\le\varepsilon_0\), \(u\in U_0\), and
\(t_*\), such that
\[
\sup_{t\in[0,t_*]}
\left|
W_{\varepsilon,u}(t)-W_{\varepsilon,u}^{P}(t)
\right|
\le
C_W t_*^{\min_{1\leq k\leq n}\left(kNa_1+P_{n-k}-\frac{n(n+1)}{2}\right)}+C_W'\varepsilon,
\]
where \(p_1,\dots,p_n\) denote the pivot exponents constructed in Lemma~\ref{lemp}, and
\begin{align}\label{64}
P_j:=p_1+\cdots+p_j,\quad 1\leq j\leq n.
\end{align}
\end{prop}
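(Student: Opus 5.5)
The plan is to expand the Wronskian of $\phi_{\varepsilon,u}=P_{\varepsilon,u}+E_{\varepsilon,u}$ column by column using multi-linearity. Each component $\phi_{\varepsilon,u,i}$ splits as $P_{\varepsilon,u,i}+E_{\varepsilon,u,i}$, and by \eqref{16} only the columns $i=1,\dots,n$ matter, with rows the derivatives of orders $1,\dots,n$. So
\[
W_{\varepsilon,u}(t)=\sum_{S\subset\{1,\dots,n\}}W_S(t),
\]
where in $W_S$ the columns indexed by $S$ are taken from $E_{\varepsilon,u}$ and the rest from $P_{\varepsilon,u}$. The term $S=\emptyset$ is exactly $W^P_{\varepsilon,u}$. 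It therefore suffices to bound each $W_S$ with $|S|=k\ge1$ by $C\,t_*^{kNa_1+P_{n-k}-n(n+1)/2}+C'\varepsilon$. Since $t_*\le1$, the exponent can then be replaced by the minimum over $k$.

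\textbf{Laplace expansion.} For fixed $S$ with $|S|=k$, I would apply the generalised Laplace expansion along the $E$-columns. This writes $W_S$ as a signed sum, over row sets $R\subset\{1,\dots,n\}$ with $|R|=k$, of the product of two minors:
\begin{itemize}
\item a $k\times k$ minor built from the $E$-derivatives with orders in $R$;
\item the complementary $(n-k)\times(n-k)$ minor built from the $P$-derivatives with orders in $R^c$.
\end{itemize}
By Proposition~\ref{prop3}, specifically the pointwise bound \eqref{31}, the $E$-minor is at most $C\,t^{kNa_1-\sum_{r\in R}r}$. For the $P$-minor, I expand each $P$-column as $\sum_q v_q(\varepsilon,u)t^q$ as in \eqref{56}. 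By multi-linearity the minor becomes a sum over distinct exponents $q_1<\dots<q_{n-k}$ of two factors:
\begin{itemize}
\item an $(n-k)\times(n-k)$ minor of the coefficient vectors $v_{q_j}$, restricted to the coordinates in $S^c$;
\item a monomial minor of size $O(t^{\sum q_j-\sum_{r\in R^c}r})$.
\end{itemize}
All $q_j\ge d^m\ge n$, so no negative powers occur. Multiplying the two minors, the power of $t$ is $kNa_1+\sum_j q_j-n(n+1)/2$.

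\textbf{Splitting by exponent size.} I then separate the tuples $(q_j)$ into two kinds.
\begin{itemize}
\item \emph{Good tuples}, with $q_j\ge p_j$ for every $j$. Then $\sum_j q_j\ge P_{n-k}$. Since all coefficients are uniformly bounded by \eqref{25} and $t\le t_*\le1$, these contribute at most $C_W t_*^{kNa_1+P_{n-k}-n(n+1)/2}$.
\item \emph{Bad tuples}, with some $q_j<p_j$. I claim the corresponding coefficient minor vanishes for the unperturbed data $\Lambda_\alpha$, i.e.\ $\varepsilon=0$. If it were non-zero, the vectors $v_{q_1},\dots,v_{q_{n-k}}$ (with $\varepsilon=0$) would be linearly independent. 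Adding $\Lambda_0$, whose first coordinate is $1$ while the others have first coordinate $0$, would give $j+1$ independent vectors among the exponents $\le q_j$ for each $j$. The greedy minimality of Lemma~\ref{lemp} (a matroid-type exchange property) then forces $p_j\le q_j$, a contradiction. Exponents coming from $|\alpha|>l$ are larger than every exponent from $|\alpha|\le l$ by \eqref{61}, so they cause no problem here.
\end{itemize}

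\textbf{The $O(\varepsilon)$ term.} Because each bad-tuple coefficient minor vanishes at $\varepsilon=0$, the multi-linear difference argument of Lemma~\ref{lem3}, together with $|\Lambda_\alpha(\varepsilon)-\Lambda_\alpha|\le\varepsilon$ and the uniform bounds on $u^\alpha$ and on the coefficients, bounds it by $C\varepsilon$. The remaining factors, namely the monomial minor and the $E$-minor, are bounded on $[0,t_*]\subset[0,1]$ uniformly, since every exponent involved is non-negative. There are finitely many $S$, $R$ and tuples, so summing gives the term $C_W'\varepsilon$.

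The main obstacle I expect is the matroid-type minimality claim for bad tuples. The issue is that the coefficient minors use only the coordinates in $S^c$, not full vectors, so vanishing of a minor is weaker than linear dependence, and one must check carefully that non-vanishing of the minor still implies independence of the full vectors. It does, since a non-zero minor of a submatrix forces the full columns to be independent. The second point to check is that this independence yields $q_j\ge p_j$ index by index, which is where the greedy construction of Lemma~\ref{lemp} is used essentially.
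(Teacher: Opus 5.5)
Your argument is correct and is essentially the paper's. Both proofs use a multilinear $P/E$ splitting, then the pivot rank bound $\sum_j q_j\ge P_{n-k}$ for any non-vanishing unperturbed coefficient determinant, then an $O(\varepsilon)$ bound for everything else. The differences are minor. The paper splits the Wronskian by rows (derivative orders) rather than columns, so the coefficient vectors $\widetilde c_{\eta_i}$ sit as full rows next to the $E$-rows in one $n\times n$ determinant; this removes the need for your Laplace/Cauchy--Binet step and your restricted-minor check. The paper also gets the $\varepsilon$-term by writing $\widetilde c_\eta(\varepsilon,u)=\widetilde c_\eta(0,u)+\delta_\eta(\varepsilon,u)$, rather than sorting tuples into good and bad.
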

\begin{proof}
Recall from (\ref{14}),
\[
P_{\varepsilon,u,i}(t)=\sum_{\vert \alpha\vert\leq N}\Lambda_\alpha^{(i)}(\varepsilon)u^\alpha t^{e_d(\alpha)},
\]
where $e_d(\alpha)=\sum_{j=1}^{m}\alpha_j(d^{j-1}+d^m)$, so $e_d(0)=0$. By definition, $e_d(\alpha)\geq n$ for all $1\leq \vert\alpha\vert\leq N$, since $d\geq d_0\geq N\geq n$. This means, for $1\leq r\leq n$, we have
\begin{align*}
P_{\varepsilon,u}^{(r)}(t)=\sum_{1\leq \vert \alpha\vert\leq N}\Lambda_\alpha(\varepsilon)u^\alpha (e_d(\alpha))_r t^{e_d(\alpha)-r}=\sum_{1\leq \vert \alpha\vert\leq N}c_\alpha(\varepsilon,u)(q_\alpha)_rt^{q_\alpha-r},
\end{align*}
where $c_\alpha(\varepsilon,u)=\Lambda_\alpha(\varepsilon)u^\alpha$, $q_\alpha=e_d(\alpha)$ and $(q)_r=q(q-1)\cdots(q-r+1)$. Clearly, $e_d(\alpha)\geq a_1=1+d^m$ for $\vert \alpha\vert \geq 1$. So $t^{e_d(\alpha)-r}\leq t^{a_1-r}$. Since there are only finitely many $1\leq \vert \alpha\vert\leq N$, $U_0$ is compact and $\Vert f_\varepsilon\Vert_{C^N(\overline{B})}$ is uniformly bounded for $0<\varepsilon\leq \varepsilon_0$. Hence, there exists $C_c>0$ such that
\begin{align}\label{33}
\vert c_{\alpha}(\varepsilon,u)\vert \leq C_c
\end{align}
uniformly for $\varepsilon,u$ and $\vert \alpha\vert \leq N$. Since $d\geq d_0\geq N$, Lemma \ref{lem4} gives injectivity of $e_d$ on all multi-indices satisfying $\vert\alpha\vert\leq N$.

For the Wronskian difference \(W_{\varepsilon,u}-W_{\varepsilon,u}^P\), we have
\[
W_{\varepsilon,u}(t)
=
\det
\begin{pmatrix}
\phi_{\varepsilon,u}(t)\\
\phi'_{\varepsilon,u}(t)\\
\vdots\\
\phi^{(n)}_{\varepsilon,u}(t)
\end{pmatrix}\quad \text{and}\quad W_{\varepsilon,u}^{P}(t)
=
\det
\begin{pmatrix}
P_{\varepsilon,u}(t)\\
P'_{\varepsilon,u}(t)\\
\vdots\\
P^{(n)}_{\varepsilon,u}(t)
\end{pmatrix},
\]
where each row is an $(n+1)$-vector. These are the extended Wronskians. Since
\[
\phi_{\varepsilon,u,0}(t)=P_{\varepsilon,u,0}(t)=1
\]
and
\[
\phi_{\varepsilon,u,0}^{(r)}(t)
=
P_{\varepsilon,u,0}^{(r)}(t)
=
0,
\quad
1\le r\le n,
\]
expanding along the first column shows that both determinants reduce to the corresponding \(n\times n\) Wronskians of the non-constant coordinates. Denote
\[
\widetilde P_{\varepsilon,u}
:=
(P_{\varepsilon,u,1},\dots,P_{\varepsilon,u,n}),
\;
\widetilde E_{\varepsilon,u}
:=
(E_{\varepsilon,u,1},\dots,E_{\varepsilon,u,n}),\; \widetilde\phi_{\varepsilon,u}:=
\widetilde P_{\varepsilon,u}
+
\widetilde E_{\varepsilon,u}.
\]
Therefore,
\[
W_{\varepsilon,u}(t)
=
\det
\begin{pmatrix}
\widetilde\phi_{\varepsilon,u}'(t)\\
\vdots\\
\widetilde\phi_{\varepsilon,u}^{(n)}(t)
\end{pmatrix}
\quad\text{and}\quad
W_{\varepsilon,u}^{P}(t)
=
\det
\begin{pmatrix}
\widetilde P_{\varepsilon,u}'(t)\\
\vdots\\
\widetilde P_{\varepsilon,u}^{(n)}(t)
\end{pmatrix}.
\]

Now, via multi-linearity of the determinant, we get
\[
W_{\varepsilon,u}(t)=\sum_{S\subset\lbrace 1,\dots,n\rbrace}\det(V_1^S(t),\dots,V_n^S(t)),
\]
where
\[
V_r^S(t)=\begin{cases}
\widetilde{E}_{\varepsilon,u}^{(r)}(t), & r\in S\\
\widetilde{P}_{\varepsilon,u}^{(r)}(t), & r\not\in S
\end{cases}.
\]
The term $S=\emptyset$ is exactly $W_{\varepsilon,u}^P(t)$. Hence, we have
\begin{align}\label{34}
W_{\varepsilon,u}(t)-W_{\varepsilon,u}^P(t)=\sum_{\substack{S\subset\lbrace 1,\dots,n\rbrace\\ S\neq \emptyset}}\det (V_1^S(t),\dots, V_n^S(t)).
\end{align}

For any non-empty \(S\subset\{1,\dots,n\}\), let
\[
S^c=\{i_1,\dots,i_h\},
\quad
k=n-h.
\]
Keeping the error rows fixed, we expand each polynomial row using the multi-linearity of the determinant. This yields the exact finite expansion
\begin{align}\label{60}
&\bigl\vert \det(V^S(t))\bigr\vert
=\bigl\vert
\det\bigl(\dots,\widetilde{P}_{\varepsilon,u}^{(i_1)}(t),\dots,\widetilde{P}_{\varepsilon,u}^{(i_h)}(t),\dots\bigr)
\bigr\vert
\notag\\
&=\bigl\vert
\det\bigl(\dots,
\sum_{1\le|\eta|\le N}(q_\eta)_{i_1}\widetilde{c}_\eta(\varepsilon,u)t^{q_\eta-i_1},
\dots,
\sum_{1\le|\eta|\le N}(q_\eta)_{i_h}\widetilde{c}_\eta(\varepsilon,u)t^{q_\eta-i_h},
\dots\bigr)
\bigr\vert
\notag\\
&=\bigl\vert
\sum_{1\le|\eta_1|\le N}\cdots
\sum_{1\le|\eta_h|\le N}
\bigl(
\prod_{a=1}^{h}(q_{\eta_a})_{i_a}
\bigr)
t^{(q_{\eta_1}+\cdots+q_{\eta_h})-(i_1+\cdots+i_h)}
\notag\\
&\qquad\qquad\qquad\qquad\qquad\cdot
\det\bigl(
\dots,
\widetilde{c}_{\eta_1}(\varepsilon,u),
\dots,
\widetilde{c}_{\eta_h}(\varepsilon,u),
\dots
\bigr)
\bigr\vert.\notag\\
&=\bigl\vert \sum_{\substack{(\eta_i)_{i\in S^c}\\ 1\leq \vert \eta_i\vert\leq N}}\bigl(\prod_{i\in S^c}(q_{\eta_i})_i\bigr)t^{\sum_{i\in S^c}q_{\eta_i}-\sum_{i\in S^c}i}\cdot \det\bigl( (\widetilde{E}_{\varepsilon,u}^{(r)}(t))_{r\in S},(\widetilde{c}_{\eta_i}(\varepsilon,u))_{i\in S^c}\bigr)\bigr\vert,
\end{align}
where \(\widetilde c_\eta(\varepsilon,u)\) denotes the vector obtained from \(c_\eta(\varepsilon,u)\) by deleting its first coordinate.

The pivot data of Lemma~\ref{lemp} are those of the unperturbed map $f$, whereas the vectors $\widetilde c_\eta(\varepsilon,u)$ are built from $f_\varepsilon$. Since a perturbation may lower the pivot exponents, see \eqref{58}, we compare with unperturbed coefficient vectors. Write
\[
\widetilde c_\eta(\varepsilon,u)
=
\widetilde c_\eta(0,u)+\delta_\eta(\varepsilon,u),
\quad
\widetilde c_\eta(0,u)
:=
u^\eta\left(
\frac{\partial_\eta f_1(0)}{\eta!},
\dots,
\frac{\partial_\eta f_n(0)}{\eta!}
\right).
\]
Since $\Vert f_\varepsilon-f\Vert_{C^N(\overline{B})}<\varepsilon$ and $U_0$ is compact, there is a constant $C_\delta>0$, independent of $\varepsilon$ and $u$, such that
\begin{align}\label{95}
\vert \delta_\eta(\varepsilon,u)\vert
\leq C_\delta \varepsilon,
\quad
1\leq \vert\eta\vert\leq N,\,
u\in U_0,
\end{align}
moreover, after enlarging $C_c$ in \eqref{33} if necessary, also
$\vert \widetilde c_\eta(0,u)\vert \leq C_c$.
Expanding by multi-linearity in the rows
\[
\widetilde c_{\eta_i}(\varepsilon,u)
=
\widetilde c_{\eta_i}(0,u)
+
\delta_{\eta_i}(\varepsilon,u),
\quad i\in S^c,
\]
and writing $h:=\vert S^c\vert$, the determinant in each summand of \eqref{60} splits as
\[
\det\!\left(
(\widetilde E_{\varepsilon,u}^{(r)}(t))_{r\in S},
(\widetilde c_{\eta_i}(\varepsilon,u))_{i\in S^c}
\right)
=
D^0_{S,\eta}(t)+D^\delta_{S,\eta}(t),
\]
where $D_{S,\eta}^0(t)$ is the single term obtained by choosing
$\widetilde c_{\eta_i}(0,u)$ in every one of the $h$ rows indexed by
$i\in S^c$, while $D_{S,\eta}^\delta(t)$ is the sum of the remaining
$2^h-1$ terms (each of the $h$ rows has two choices), each of which
contains at least one row of the form
$\delta_{\eta_i}(\varepsilon,u)$.

Let
\[
V(Q):=\operatorname{span}\{\widetilde c_\eta(0,u):1\le |\eta|\le N,\ q_\eta\le Q\},
\]
which does not depend on $u\in U_0$, since every $u^\eta$ is non-zero. Define the pivot exponents by
\[
p_j:=\min\{Q:\dim V(Q)\ge j\},\quad 1\le j\le n.
\]
These are precisely the pivot exponents constructed in Lemma~\ref{lemp}, with the initial pivot \(p_0=0\) omitted after passing from the extended \((n+1)\)-dimensional vectors to the last \(n\) coordinates. Since the corresponding spanning conditions are equivalent, the remaining pivot exponents coincide. By \eqref{61}, enlarging the collection of multi-indices from \(|\alpha|\le l\) to \(|\alpha|\le N\) does not affect the pivot exponents. Set
\[
P_j:=p_1+\cdots+p_j,\quad P_0:=0.
\]
If $D_{S,\eta}^0\not\equiv 0$, then the vectors $\widetilde c_{\eta_i}(0,u)$, $i\in S^c$, are linearly independent; in particular, the $\eta_i$ are distinct. Listing the corresponding exponents in increasing order as $q^{(1)}<\cdots<q^{(h)}$, we have 
\[
V(q^{(j)})=\operatorname{span}
\left\{
\widetilde c_\eta(0,u):
1\leq |\eta|\leq N,\ q_\eta\leq q^{(j)}
\right\}.
\]
Hence $\dim V(q^{(j)})\geq j$ and $q^{(j)}\geq p_j$ for $1\leq j\leq h$. Summing over $j$, we obtain
\begin{align}\label{62}
q_{\eta_1}+\cdots+q_{\eta_h}\ge P_h=P_{n-k}\quad\text{whenever}\quad D_{S,\eta}^0\not\equiv 0.
\end{align}
On the other hand, every determinant in $D_{S,\eta}^\delta(t)$ has the rows $\widetilde E_{\varepsilon,u}^{(r)}$, $r\in S$, at least one row of norm at most $C_\delta\varepsilon$, and all its other rows of norm at most $C_c+C_\delta \varepsilon_0$. By the Leibniz formula for the determinant and \eqref{31},
\begin{align}\label{96}
\vert D_{S,\eta}^\delta(t)\vert \ll \varepsilon \prod_{s\in S}t^{Na_1-s}.
\end{align}

By \eqref{31}, together with \eqref{33}, \eqref{62} and \eqref{96}, \eqref{60} becomes
\begin{align}\label{35}
\bigl\vert \det(V^S(t))\bigr\vert
&\ll
\prod_{s\in S}t^{Na_1-s} t^{P_{n-k}-\sum_{i\in S^c}i}+\varepsilon\prod_{s\in S}t^{Na_1-s}\cdot \max_{(\eta_i)_{i\in S^c}}t^{\sum_{i\in S^c}(q_{\eta_i}-i)}\notag\\
&=
t^{kNa_1+P_{n-k}-(\sum_{s\in S}s+\sum_{i\in S^c}i)}+\omega \varepsilon
\notag\\
&\ll
t^{kNa_1+P_{n-k}-\frac{n(n+1)}{2}}+\varepsilon,
\end{align}
where the final line follows from $S\cup S^c=\{1,\dots,n\}$ and 
\[
\omega:=
\prod_{s\in S}t^{Na_1-s}
\cdot
\max_{(\eta_i)_{i\in S^c}}
t^{\sum_{i\in S^c}(q_{\eta_i}-i)}
\leq 1.
\]
Indeed, $0\leq t\leq1$, while $Na_1-s>0$ for $s\in S$ and
$q_{\eta_i}-i\geq a_1-i>0$ for $i\in S^c$. The implied constants of \eqref{35} are independent of $\varepsilon$, $u$ and $t_*$.

Since \(S\neq \emptyset\), we have \(\vert S\vert\geq 1\). For a set \(S\) with \(\vert S\vert=k\), denote
\[
\rho_k:=kNa_1+P_{n-k}-\frac{n(n+1)}{2}.
\]
Since there are only finitely many non-empty subsets \(S\), namely \(2^n-1\), combining \eqref{35} with \eqref{34}, there exist constants \(C_W,C_W'>0\), independent of $\varepsilon$, $u$ and $t_*$, such that
\[
\vert W_{\varepsilon,u}(t)-W_{\varepsilon,u}^P(t)\vert
\leq C_W\max_{1\leq k\leq n} t^{\rho_k}+C_W'\varepsilon
=
C_W t^{\min_{1\leq k\leq n}\rho_k}+C_W'\varepsilon,
\]
where the latter equality follows from $0\leq t\leq t_*\leq 1$. Hence,
\[
\sup_{t\in [0,t_*]}
\vert W_{\varepsilon,u}(t)-W_{\varepsilon,u}^P(t)\vert
\leq
C_W t_*^{\min_{1\leq k\leq n}\rho_k}+C_W'\varepsilon.
\qedhere
\]
\end{proof}
\begin{rem}
For the purpose of establishing the dominance of the polynomial Wronskian, the estimates developed above are optimal within the present approach. In particular, the lower bound for the polynomial Wronskian in Proposition~\ref{prop2} is the strongest consequence of the available pivot information, while the remainder estimates in Proposition~\ref{prop3} and the $\varepsilon$-independent term in the Wronskian comparison in Proposition~\ref{prop4} are likewise sharp with respect to the power of \(t_*\). Consequently, there is no further room to improve the smoothness requirement by refining these estimates alone. We note, however, that the perturbative term in \eqref{35} is estimated crudely by discarding its positive powers of \(t\). Retaining these powers would yield a larger admissible perturbation radius, but would have no effect on the smoothness requirement.

One might wonder whether the constant appearing in Lemma~\ref{lem6}, where the difference of derivatives is bounded by \(3\), is unnecessarily crude. However, replacing this constant by an arbitrarily small parameter \(\eta>0\) only improves the multiplicative constant in the final estimates and does not alter the power of \(t_*\). Although this refinement allows one to relax a strict inequality in the final power comparison to a non-strict one, it does not reduce the required smoothness threshold.
\end{rem}
\subsubsection{Proof of Theorem \ref{upnf}}
Proposition~\ref{prop4} provides a uniform estimate for the difference between the Wronskian of the full fibered map and that of its polynomial part. Combined with the uniform lower bound for the polynomial Wronskian established in Proposition~\ref{prop2}, the proof reduces to comparing the corresponding powers of \(t_*\). We show that, under the smoothness assumption \(N\geq\max\{n,l+1\}\), the contribution of the fibered remainder is of strictly higher-order and is therefore dominated by the polynomial term. Consequently, the full fibered Wronskian inherits the same uniform lower bound, completing the proof of Theorem~\ref{upnf}.
\begin{proof}[Proof of Theorem \ref{upnf}]
Let $d_0=\max\lbrace N,l+1\rbrace$ be as in Lemma~\ref{lemp}, fix a compact set $U_0\subset (\mathbb{R}^\times)^m$ and $d\geq d_0$, and define $a_j$ and $\gamma_u$ as in \eqref{5}. Let \(\varepsilon_0>0\) be chosen as in Lemma \ref{lem3}. All constants below are independent of $0<\varepsilon\leq \varepsilon_0$, $u\in U_0$ and $t_*$. By Proposition \ref{prop2}, there exist constants \(0<t_0\le1\) and \(c_P>0\) such that, for every $0<t_*\leq t_0$, every \(0<\varepsilon\le\varepsilon_0\) and every \(u\in U_0\), we have
\begin{align}\label{36}
\sup_{t\in[0,t_*]}
\bigl|W_{\varepsilon,u}^P(t)\bigr|
\ge c_P t_*^{Q_{\mathrm{good}}},
\end{align}
where \(Q_{\mathrm{good}}\) is given by \eqref{63}. By Proposition \ref{prop4}, there exist constants \(C_W,C_W'>0\),
independent of \(0<\varepsilon\le\varepsilon_0\), \(u\in U_0\), and
\(t_*\), such that, whenever $\vert \gamma_u(t)\vert\leq \delta$ for all $u\in U_0$ and $t\in [0,t_*]$, with $\delta$ as in Proposition~\ref{prop3},
\begin{align}\label{37}
\sup_{t\in[0,t_*]}
\left|
W_{\varepsilon,u}(t)-W_{\varepsilon,u}^{P}(t)
\right|
\le
C_W t_*^{\min_{1\leq k\leq n}\left(kNa_1+P_{n-k}-\frac{n(n+1)}{2}\right)}+C_W'\varepsilon,
\end{align}
where \(P_j\) is defined by \eqref{64} and $a_1=1+d^m$.

We now want the first term on the RHS of \eqref{37} to be dominated by \(\frac{1}{4}c_Pt_*^{Q_{\mathrm{good}}}\); the second term will be controlled by the choice of $\varepsilon_1$. To ensure that this can be achieved by shrinking \(t_*\), it suffices that
\begin{align*}
kNa_1+P_{n-k}-\frac{n(n+1)}{2}>Q_{\text{good}}=P_n-\frac{n(n+1)}{2}
\end{align*}
for every $1\leq k\leq n$. The above inequality can be simplified to
\begin{align}\label{66}
N>\max_{1\leq k\leq n}\frac{P_n-P_{n-k}}{ka_1}=\max_{1\leq k\leq n}\frac{p_{n-k+1}+\cdots +p_n}{ka_1}.
\end{align}
Notice that, for every $1\leq k\leq n$, 
\[
\frac{p_{n-k+1}+\cdots +p_n}{k}\leq p_n,
\]
since the quantity is the average of the largest $k$ pivot exponents constructed in Lemma~\ref{lemp} and $p_1<\cdots<p_n$. Thus the maximum is attained at $k=1$. Hence, \eqref{66} becomes
\begin{align}\label{65}
Na_1>p_n.
\end{align}

Recall that Lemma~\ref{lemp} gives
\[
p_n\leq \max_{1\leq i\leq n}e_d(\alpha_i),\quad\vert\alpha_i\vert\leq l.
\]
For $\vert\alpha\vert\leq l$ and $d\geq l$,
\[
e_d(\alpha)=\vert \alpha\vert d^m+\sum_{j=1}^{m}\alpha_jd^{j-1}\leq ld^m+ld^{m-1}\leq (l+1)d^m.
\]
Therefore, if we want
\[
Na_1=N(1+d^m)\geq (l+1)d^m
\]
to hold in general, we must have $N\geq l+1$; this also makes the above inequality strict. Thus \eqref{65} holds. In conclusion, the sufficient condition on $N$ is
\[
N\geq \max\lbrace n,l+1\rbrace.
\]

Since all the constants above are independent of $t_*$, we may now fix $0<t_*\leq t_0$ so small that $\gamma_u([0,t_*])\subset B$ and $\vert \gamma_u(t)\vert\leq \delta$ for all $u\in U_0$ and $t\in [0,t_*]$, and
\[
C_W t_*^{\min_{1\leq k\leq n}\left(kNa_1+P_{n-k}-\frac{n(n+1)}{2}\right)}<\frac{1}{4}c_Pt_*^{Q_{\text{good}}}.
\]
With $t_*$ fixed, set 
\[
\varepsilon_1:=\min\Bigl\lbrace \varepsilon_0,\frac{c_Pt_*^{Q_{\text{good}}}}{4C_W'}\Bigr\rbrace.
\]
Then \eqref{37} gives, for every $0<\varepsilon\leq \varepsilon_1$ and every $u\in U_0$,
\[
\sup_{t\in [0,t_*]}\vert W_{\varepsilon,u}(t)-W_{\varepsilon,u}^P(t)\vert <\frac{1}{2}c_Pt_*^{Q_{\text{good}}}.
\]
Now fix the interval $[0,t_*]$. Combining the above estimate with (\ref{36}), and using the continuity of both functions on the compact interval $[0,t_*]$, the corresponding suprema are attained. We can then properly choose $t_{\varepsilon,u}\in[0,t_*]$ such that
\[
|W_{\varepsilon,u}^P(t_{\varepsilon,u})|
\ge c_P t_*^{Q_{\mathrm{good}}}\quad\text{and}\quad \vert W_{\varepsilon,u}(t_{\varepsilon,u})-W_{\varepsilon,u}^P(t_{\varepsilon,u})\vert <\frac{1}{2}c_Pt_*^{Q_{\text{good}}}.
\]
By the reverse triangle inequality, we deduce
\begin{align*}
\vert W_{\varepsilon,u}(t_{\varepsilon,u})\vert &\geq \vert W_{\varepsilon,u}^P(t_{\varepsilon,u})\vert-\vert W_{\varepsilon,u}(t_{\varepsilon,u})-W_{\varepsilon,u}^P(t_{\varepsilon,u})\vert\\
&\geq c_Pt_*^{Q_{\text{good}}}-\frac{1}{2}c_Pt_*^{Q_{\text{good}}}=\frac{1}{2}c_Pt_*^{Q_{\text{good}}}.
\end{align*}
Denote
\[
c_*:=\frac{1}{4}c_Pt_*^{Q_{\text{good}}}>0.
\]
Then 
\[
\vert W_{\varepsilon,u}(t_{\varepsilon,u})\vert \geq 2c_*.
\]
Finally, by continuity, there is a small interval $I_{\varepsilon,u}$ around $t_{\varepsilon,u}$ such that
\[
\vert W_{\varepsilon,u}(t)\vert\geq c_*,\quad \forall\, t\in I_{\varepsilon,u}.
\]
The constant $c_*$ is independent of $0<\varepsilon\leq \varepsilon_1$ and $u\in U_0$.
\end{proof}
\begin{rem}
The smoothness threshold \(N\geq\max\{n,l+1\}\) is essentially optimal within the present framework. As observed in the previous remark, replacing the constant \(3\) in Lemma~\ref{lem6} by an arbitrarily small parameter \(\eta>0\) merely relaxes the strict inequality in \eqref{65} to a non-strict one. As the proof shows, this has no effect on the resulting general smoothness requirement.

Philosophically, the additional degree of smoothness is not needed to recover the polynomial structure of the fiber. Rather, it is precisely this extra derivative that provides sufficient control of the remainder term to ensure the dominance of the polynomial part. Since an arbitrary $C^N$ remainder admits only a uniform upper bound, with no corresponding lower bound or cancellation mechanism available, its local behaviour is essentially unrestricted, and the comparison is therefore necessarily one sided. Consequently, the condition $N\geq l+1$ appears intrinsic to the present universal fibering framework, though it may be avoided by allowing the fibering to depend on the manifold.

It is perhaps worth noting that the worst case contribution comes from the mixed determinants \eqref{34} containing exactly one remainder row. Although these terms retain the greatest amount of polynomial structure, they are therefore closest in order to the leading polynomial Wronskian, making them the most difficult to dominate.

Since Theorem~\ref{thmfib} is obtained directly from Theorem~\ref{upnf}, it inherits the same smoothness requirement. Therefore, any further reduction of the smoothness assumption in the fibering theorem would require a genuinely different mechanism for controlling the remainder or a fundamentally different fibering construction. From this perspective, the perturbative nature of the proof should be viewed as a consequence of the present framework rather than its essential feature.
\end{rem}
\subsubsection{Proof of Theorem \ref{thmfib}}
A key feature of Theorem~\ref{upnf} is the uniformity of the construction with respect to both the parameter \(u\in U_0\) and the analytic perturbation \(0<\varepsilon\le\varepsilon_1\). In particular, the compact parameter set \(U_0\), together with the constants \(t_*\) and \(c_*\), are chosen independently of \(\varepsilon\). Consequently, passing from the analytic approximations \(f_\varepsilon\) back to the original \(C^N\) map \(f\) requires no modification of the parameter space, and the family of fibers remains indexed by the same compact set \(U_0\). The proof therefore reduces to an application of the continuity of the Wronskian, with no further geometric identification required.

With Theorem~\ref{upnf} established, it remains to guarantee the existence of analytic manifolds arbitrarily close to a $C^N$ manifold in the \(C^N\) topology. This is provided by the following version of Whitney's analytic approximation theorem; see \citep[Lemma~5]{32813238-652d-3e78-acb7-889d5d49963a}, which is essentially a density statement.
\begin{thm}[Whitney's analytic approximation theorem]\label{whit}
Let $U\subset\mathbb R^m$ be open and let $f:U\to\mathbb R^n$ be of class $C^N$, where $N\in \mathbb{N}$. Let $\overline B\subset U$ be a closed ball. Then for every $\varepsilon>0$ there exists an analytic map $f_\varepsilon:U\to\mathbb R^n$ such that
\[
\|f-f_\varepsilon\|_{C^N(\overline B)}
<
\varepsilon.
\]
\end{thm}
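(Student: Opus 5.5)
The plan is to prove Theorem~\ref{whit} by the classical Weierstrass heat-kernel smoothing. First localise $f$ to a compactly supported function on $\mathbb R^m$, then convolve it with a Gaussian of small width. The convolution is the restriction of an entire function, and its derivatives up to order $N$ converge uniformly to those of the localised function, which agrees with $f$ near $\overline B$.

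\textbf{Step 1 (localisation).} Since $\overline B\subset U$ is compact and $U$ is open, choose a radius $r>0$ such that the closed $2r$-neighbourhood of $\overline B$ still lies in $U$. Pick a $C^\infty$ cutoff $\chi$ with the following properties:
\begin{itemize}
\item $\chi\equiv1$ on the $r$-neighbourhood of $\overline B$;
\item $\operatorname{supp}\chi$ is contained in the $2r$-neighbourhood of $\overline B$.
\end{itemize}
Set $g=\chi f$ on $U$ and $g=0$ off $U$. Then $g\in C^N_c(\mathbb R^m,\mathbb R^n)$, and $\partial_\alpha g=\partial_\alpha f$ on $\overline B$ for every $|\alpha|\le N$. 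Moreover, each $\partial_\alpha g$ with $|\alpha|\le N$ is uniformly continuous and bounded on $\mathbb R^m$, because it is continuous with compact support.

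\textbf{Step 2 (Gaussian smoothing and analyticity).} Let $G_\sigma(x)=(\pi\sigma)^{-m/2}e^{-|x|^2/\sigma}$, and define $g_\sigma:=g*G_\sigma$ componentwise.

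To see that $g_\sigma$ is analytic, replace $x$ by $z\in\mathbb C^m$ in
\[
\int g(y)(\pi\sigma)^{-m/2}e^{-\sum_j(z_j-y_j)^2/\sigma}\,dy .
\]
Since $g$ has compact support, the integrand is bounded locally uniformly in $z$, and it is holomorphic in $z$. Morera's theorem, or differentiation under the integral sign, then shows that this integral defines an entire function on $\mathbb C^m$. Its restriction to $\mathbb R^m$ is $g_\sigma$, so $g_\sigma$ is real analytic on $\mathbb R^m$. We set $f_\varepsilon:=g_\sigma|_U$ for a suitable $\sigma=\sigma(\varepsilon)$, chosen in Step 3.

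\textbf{Step 3 (convergence in $C^N$).} Since $g\in C^N_c$, differentiation commutes with convolution, so $\partial_\alpha g_\sigma=(\partial_\alpha g)*G_\sigma$ for $|\alpha|\le N$. Because $G_\sigma$ is an approximate identity, with mass $1$ concentrating at $0$ as $\sigma\to0$, uniform continuity of $\partial_\alpha g$ gives
\[
\sup_{x}\bigl|(\partial_\alpha g)*G_\sigma(x)-\partial_\alpha g(x)\bigr|\le \omega_\alpha(\eta)+2\|\partial_\alpha g\|_\infty\int_{|y|>\eta}G_\sigma(y)\,dy .
\]
Here $\omega_\alpha$ is the modulus of continuity of $\partial_\alpha g$ and $\eta>0$ is arbitrary. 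Given $\varepsilon>0$, first choose $\eta$ so that the first term is small, then choose $\sigma$ so that the tail integral is small. Taking the maximum over the finitely many $|\alpha|\le N$ and restricting to $\overline B$, where $\partial_\alpha g=\partial_\alpha f$, yields $\|f-f_\varepsilon\|_{C^N(\overline B)}<\varepsilon$.

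The argument is routine. The only step requiring care is Step 2: one must justify holomorphy of the complexified integral. This is handled by the compact support of $g$ together with local boundedness of $|e^{-(z-y)^2/\sigma}|\le e^{(|\operatorname{Im} z|^2-|\operatorname{Re} z-y|^2)/\sigma}$.
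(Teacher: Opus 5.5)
Your argument is correct. The cutoff makes $g=\chi f$ a compactly supported $C^N$ map that agrees with $f$ to order $N$ on $\overline B$, the complexified Gaussian integral is entire, and the approximate-identity estimate applied to each $\partial_\alpha g$ gives uniform convergence of all derivatives of order at most $N$. The paper gives no proof of its own and simply cites Whitney's Lemma~5; your Weierstrass-transform argument is essentially the classical proof behind that citation, so it is a self-contained justification of exactly the local version the paper uses (even with an entire approximant).
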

Finally, we reach the main geometric consequence of the paper: \(l\) non-degenerate manifolds satisfying the essentially optimal smoothness threshold \(N\geq\max\{n,l+1\}\) can be fibered into Wronskian non-degenerate curves.
\begin{proof}[Proof of Theorem \ref{thmfib}]
Let $d_0$ be as in Theorem~\ref{upnf}, fix a compact set $U_0\subset (\mathbb{R}^\times)^m$ and $d\geq d_0$, and let $\varepsilon_1>0$, $0<t_*\leq 1$ and $c_*>0$ be the constants given by Theorem~\ref{upnf} for this choice of $U_0$ and $d$. By Theorem \ref{whit}, for every $0<\varepsilon\leq \varepsilon_1$, there exists an analytic map $f_\varepsilon:\overline{B}\to \mathbb{R}^n$ such that
\[
\Vert f-f_\varepsilon\Vert_{C^N(\overline{B})}<\varepsilon.
\]
Applying Theorem~\ref{upnf} to these maps \(f_\varepsilon\), we obtain, for every \(u\in U_0\) and every \(0<\varepsilon\le\varepsilon_1\), 
\begin{align}\label{38}
\sup_{t\in [0,t_*]}\vert W((f_{\varepsilon,1}\circ\gamma_u)',\dots,(f_{\varepsilon,n}\circ\gamma_u)')(t)\vert \geq 2c_*.
\end{align}

Since \(N\ge n\), for every \(0\le k\le n\), each derivative
\[
\frac{d^k}{dt^k}\bigl((f_\varepsilon-f)\circ\gamma_u\bigr)(t)
\]
is a finite sum of terms of the form
\[
D^\beta(f_\varepsilon-f)(\gamma_u(t))
\prod_{p=1}^{|\beta|}
\gamma_{u,j_p}^{(r_p)}(t),
\]
where \(0\le |\beta|\le k\le n\) and $r_1+\cdots+r_{|\beta|}=k$. Since \(\gamma_u([0,t_*])\subset B\subset\overline B\), and by \eqref{29}, all derivatives \(\gamma_{u,j}^{(r)}\), \(0\le r\le n\), are uniformly bounded on \([0,t_*]\), there exists a constant \(C_{U_0}>0\), independent of \(u\in U_0\), such that
\[
\left\|
(f_\varepsilon-f)\circ\gamma_u
\right\|_{C^n([0,t_*])}
\le
C_{U_0}
\|f_\varepsilon-f\|_{C^n(\overline B)}\le
C_{U_0}
\|f_\varepsilon-f\|_{C^N(\overline B)}
\to0,
\]
since $\|f_\varepsilon-f\|_{C^N(\overline B)}\to0$ as $\varepsilon\to0$. Hence
\[
f_\varepsilon\circ\gamma_u\to f\circ\gamma_u
\quad\text{in } C^n([0,t_*]).
\]

Since the determinant is a polynomial expression in its entries, the corresponding Wronskians converge uniformly on \([0,t_*]\).
\[
W((f_{\varepsilon,1}\circ\gamma_u)',\dots,(f_{\varepsilon,n}\circ\gamma_u)')\to W((f_1\circ\gamma_u)',\dots,(f_n\circ\gamma_u)').
\]
Consequently, combining this with \eqref{38}, we obtain
\[
\sup_{t\in [0,t_*]}\vert W((f_{1}\circ\gamma_u)',\dots,(f_{n}\circ\gamma_u)')(t)\vert \geq c_*.
\]
Since the Wronskian is continuous on the compact interval $[0,t_*]$, this supremum is attained. Thus, for every $u\in U_0$, there exists $t_u\in [0,t_*]$ such that
\[
\vert W((f_{1}\circ\gamma_u)',\dots,(f_{n}\circ\gamma_u)')(t_u)\vert \geq c_*>0.
\]
Therefore the curve
\[
\phi_u(t):=f(\gamma_u(t))
\]
is Wronskian non-degenerate at $t_u$.
\end{proof}
\subsubsection{Special case: hypersurfaces}
We now return to the manifold setting. As observed in the remarks following Theorem~\ref{thmfib}, if $f$ is a local parametrisation of an $m$-dimensional manifold, then $\operatorname{rank}Df(0)=m$. In the hypersurface case $m=n-1$, this allows the proof of Theorem~\ref{thmfib} to be sharpened, yielding the optimal smoothness threshold $N\ge\max\{n,l\}$, which is the absolute minimum required to formulate the non-degeneracy assumptions underlying the fibering construction.
\begin{prop}[Hypersurface case]\label{prop:hypersurface}
Assume, in addition, that \(f\) is a local parametrisation of an
\((n-1)\)-dimensional manifold in \(\mathbb R^n\). Then, after a linear
change of coordinates in the parameter space if necessary,
Theorem~\ref{thmfib} remains valid under the weaker smoothness assumption
\[
N\ge\max\{n,l\}.
\]
\end{prop}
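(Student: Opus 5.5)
The plan is to rerun the proof of Theorem~\ref{upnf}, and hence of Theorem~\ref{thmfib}, in the hypersurface setting. The one place where $N\ge l+1$ was used is the strict inequality \eqref{65}, $Na_1>p_n$. I will show that for hypersurfaces, after a suitable linear change of parameters, one has $p_n=k\,a_1$ with $k\le l$ the order of first non-vanishing normal curvature. This reduces \eqref{65} to $N\ge k$, at worst with equality. The equality case will then be absorbed by a smallness refinement of Lemma~\ref{lem6}.

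\textbf{Step 1: pivot structure.} Since $\operatorname{rank}Df(0)=n-1=m$, the vectors $\partial_{e_1}f(0),\dots,\partial_{e_m}f(0)$ are independent. Their exponents $e_d(e_j)=a_j$ are the smallest non-zero exponents by \eqref{61}. The greedy construction of Lemma~\ref{lemp} therefore gives $\beta_j=e_j$ and $p_j=a_j$ for $1\le j\le n-1$.

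Only $\beta_n$ comes from higher order. Let $\nu$ be a unit normal to $\operatorname{span}\{\partial_j f(0)\}$ and set $g=\nu\cdot f$. Then $\Lambda_\alpha$ lies outside the current span exactly when $\partial_\alpha g(0)\neq0$. Let $k\le l$ be the least order with $\partial_\alpha g(0)\ne0$ for some $|\alpha|=k$; then $k\ge2$. The degree-$k$ homogeneous part of the Taylor polynomial of $g$ is a non-zero form, so it does not vanish identically on lines. After a linear change of the parameter coordinates (which preserves rank, $C^N$ regularity and $l$ non-degeneracy), I can assume $\partial_1^k g(0)\neq0$. The exponent of $ke_1$ is the smallest among all $|\alpha|=k$, so $\beta_n=ke_1$ and
\[
p_n=e_d(ke_1)=k(1+d^m)=k\,a_1 .
\]

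\textbf{Step 2: exponent comparison.} In \eqref{66}, the terms with $k'\ge2$ remainder rows are averages of the top $k'$ pivots. These involve some $p_j=a_j<p_n$, so they are strictly below $p_n/a_1=k\le N$ and hence strictly dominated. The one critical term is the single-remainder-row determinant ($k'=1$). Its power of $t$ is $Na_1+P_{n-1}-\tfrac{n(n+1)}2$, and this equals $Q_{\rm good}$ exactly when $N=k$, so $N=l=k$ is the only problematic case.

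\textbf{Step 3: the equality case.} This is the main obstacle, and I expect it to need the most care. Here I will use the remark after Proposition~\ref{prop4}. I replace the constant $3$ in Lemma~\ref{lem6} by an arbitrary $\eta>0$, which is possible by uniform continuity of $\partial_\tau f$ on $\overline B$ together with $\varepsilon\le\eta/3$. Then the bound \eqref{32} for $|\beta|\le N$ becomes $C\eta|x|^{N-|\beta|}$, and Proposition~\ref{prop3} yields $\|E_{\varepsilon,u}\|_{C^n}\le C\eta\, t_*^{Na_1-n}$. The single-remainder-row determinants in \eqref{34} are then bounded by $C\eta\,t_*^{Q_{\rm good}}$, with $C$ independent of $\eta$, $t_*$, $\varepsilon$ and $u$.

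Crucially, the lower bound $c_P t_*^{Q_{\rm good}}$ from Proposition~\ref{prop2} does not depend on $\eta$ or $\delta$: it uses only coefficients of order $\le N$ at $0$. So I first fix $\eta$ with $C\eta<c_P/8$, which determines $\delta$. Then I shrink $t_*$ to handle the strictly higher-order terms from Step 2 and to ensure $|\gamma_u|\le\delta$. Finally I choose $\varepsilon_1$ as before.

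\textbf{Step 4: conclusion.} The conclusion of Theorem~\ref{upnf} then holds. Theorem~\ref{thmfib} follows verbatim via Theorem~\ref{whit} and continuity of the Wronskian, which uses only $N\ge n$. One point to verify along the way is that the requirement $d\ge d_0$ is unaffected: injectivity of $e_d$ on $\{|\alpha|\le N\}$ and \eqref{61} need only $d\ge\max\{N,l\}$, which still holds with $N=l$.
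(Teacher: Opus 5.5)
Your proposal is correct and follows essentially the same route as the paper: the rank condition makes the first $n-1$ pivots the first-order derivatives, and a linear reparametrisation realises the transverse direction by a pure derivative $\partial_{ke_1}$, giving $p_n\le l\,a_1$. The resulting possible equality $Na_1=p_n$ is then absorbed by replacing the constant $3$ in Lemma~\ref{lem6} with a small $\eta>0$. The differences are minor: you use the elementary fact that a non-zero homogeneous form is non-zero in some direction, with the minimal transverse order $k$, which gives $p_n=ka_1$ exactly, instead of the paper's appeal to polarisation of symmetric multilinear forms. You also spell out the order of choices $\eta\to\delta\to t_*\to\varepsilon_1$ and the independence of $c_P$ from $\eta$, both of which the paper leaves implicit.
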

\begin{proof}
Since \(f\) is a local parametrisation of an \(m\)-dimensional manifold,
\[
\operatorname{rank}Df(0)=m=n-1.
\]
Let \(e_1,\dots,e_m\) denote the standard basis vectors of
\(\mathbb R^m\), and let
\[
T:=\operatorname{Im}Df(0)
=
\operatorname{span}
\left\{
\partial_{e_1}f(0),\dots,\partial_{e_m}f(0)
\right\}.
\]
Then \(T\) is an \((n-1)\)-dimensional subspace of \(\mathbb R^n\). Since \(f\) is \(l\) non-degenerate at \(0\),
\[
\operatorname{span}
\{\partial_\alpha f(0):1\leq|\alpha|\leq l\}
=
\mathbb R^n.
\]
Therefore, there exists a multi-index \(\kappa\) with 
\begin{align}\label{69}
2\le |\kappa|=:r\le l
\end{align}
such that $\partial_\kappa f(0)\notin T$. Indeed, all first order derivatives belong to $T$, while the derivatives of order at most $l$ span $\mathbb R^n$.

Let $D^rf(0):(\mathbb R^m)^r\rightarrow\mathbb R^n$ denote the symmetric \(r\)-linear map given by
\begin{align}\label{67}
D^rf(0)[v_1,\dots,v_r]
:=
\left.
\frac{\partial^r}{\partial t_1\cdots\partial t_r}
f(t_1v_1+\cdots+t_rv_r)
\right|_{t_1=\cdots=t_r=0}.
\end{align}
Expanding each \(v_j\) with respect to the standard basis and using multi-linearity,
\begin{align}\label{68}
D^rf(0)[v_1,\dots,v_r]
=
\sum_{i_1,\dots,i_r=1}^{m}
\left(
\prod_{j=1}^{r}v_{j,i_j}
\right)
\partial_{e_{i_1}+\cdots+e_{i_r}}f(0).
\end{align}
If $v_1=\cdots=v_r=:v$, then
\[
D^rf(0)[v,\dots,v]=\sum_{i_1,\dots,i_r=1}^{m}v_{i_1}\cdots v_{i_r}\partial_{e_{i_1}+\cdots+e_{i_r}}f(0).
\]

Since \(T\) has codimension \(1\), there exists a non-zero linear functional \(L:\mathbb R^n\to\mathbb R\) such that \(\ker L=T\). By \eqref{69} and \eqref{68}, if $\kappa=(\kappa_1,\dots,\kappa_m)$, then
\[
\partial_\kappa f(0)
=
D^rf(0)
[\underbrace{e_1,\dots,e_1}_{\kappa_1},
\underbrace{e_2,\dots,e_2}_{\kappa_2},
\dots,
\underbrace{e_m,\dots,e_m}_{\kappa_m}].
\]
Since \(\partial_\kappa f(0)\notin T\), we have $L(\partial_\kappa f(0))\neq0$. Hence the composition
\begin{align}\label{70}
L\circ D^rf(0):(\mathbb R^m)^r\to\mathbb R
\end{align}
is not identically zero. Since \(D^rf(0)\) is symmetric and multi-linear and \(L\) is linear, the composition \(L\circ D^rf(0)\) is again a symmetric multi-linear map. 

By \citep[Theorem~1]{THOMAS2014468}, a symmetric multi-linear form is
identically zero if and only if its associated diagonal polynomial is
identically zero. In the present case, the associated diagonal polynomial is
\[
v\mapsto L(D^rf(0)[v,\dots,v]).
\]
By \eqref{70}, there exists some direction \(v\in\mathbb R^m\) such that
\[
L(D^rf(0)[v,\dots,v])\neq0.
\]
Since \(\ker L=T\), it follows that
\[
D^rf(0)[v,\dots,v]\notin T.
\]
Now extend \(v\) to a basis \(v,v_2,\dots,v_m\) of \(\mathbb R^m\). There is a unique linear map $A:\mathbb R^m\to\mathbb R^m$ satisfying
\[
Ae_1=v,\qquad
Ae_j=v_j,\quad 2\le j\le m.
\]
Since \(v,v_2,\dots,v_m\) form a basis of \(\mathbb R^m\), the linear map \(A\) is invertible, that is, $A\in GL_m(\mathbb R)$.

We can then replace \(f\) by the local parametrization
\begin{align}\label{71}
\widetilde{f}(x):=f(Ax).
\end{align}
This is simply a linear change of coordinates in the parameter space. In particular, it does not alter the underlying manifold. Therefore, by \eqref{67} and \eqref{68}, we have
\begin{align*}
D^r\widetilde{f}(0)[e_1,\dots,e_1]=\partial_{re_1}\widetilde{f}(0)&=\frac{\partial^r}{\partial t_1\cdots\partial t_r}\widetilde{f}(t_1e_1+\cdots+t_re_1)\Big|_{t_1=\cdots=t_r=0}\\
&=\frac{\partial^r}{\partial t_1\cdots \partial t_r}f\Bigl(A\bigl(\sum_{i=1}^{r}t_ie_1\bigr) \Bigr)\Big|_{t_1=\cdots=t_r=0}\\
&=\frac{\partial^r}{\partial t_1\cdots \partial t_r}f\Bigl(\sum_{i=1}^{r}t_i v \Bigr)\Big|_{t_1=\cdots=t_r=0}\\
&=D^rf(0)[v,\dots,v]\not\in T.
\end{align*}
By \eqref{71} and the above, we have 
\[
D\widetilde{f}(0)=Df(0)\circ A.
\]

Since \(A\) is invertible, it is bijective. Hence $A(\mathbb R^m)=\mathbb R^m$. Therefore
\[
\mathrm{Im}(Df(0)\circ A)
=
Df(0)(A(\mathbb R^m))
=
Df(0)(\mathbb R^m)
=
\mathrm{Im}Df(0)
=
T.
\]
Thus
\[
T
=
\mathrm{Im}(Df(0)\circ A)
=
\mathrm{Im}(D\widetilde f(0))
=
\mathrm{span}
\left\{
\partial_{e_1}\widetilde f(0),
\dots,
\partial_{e_m}\widetilde f(0)
\right\}.
\]
Hence
\[
\partial_{e_1}\widetilde f(0),
\dots,
\partial_{e_m}\widetilde f(0),
\partial_{re_1}\widetilde f(0)
\]
span \(\mathbb R^n\).

We now revisit the proof of Theorem~\ref{upnf}. Applying the pivot construction used in the proof of Lemma~\ref{lemp} to the spanning family
\[
\partial_{e_1}\widetilde f(0),\dots,
\partial_{e_m}\widetilde f(0),
\partial_{re_1}\widetilde f(0),
\]
we obtain
\begin{align}\label{73}
p_n\le ra_1\le la_1,
\end{align}
since
\[
e_d(e_j)=a_j=d^{j-1}+d^m,\quad
e_d(re_1)=ra_1=r(1+d^m),
\]
and \(r\ge2\) implies \(a_j\le ra_1\) for every \(1\le j\le m\).

The remainder of the proof is identical to that of
Theorem~\ref{upnf}. The only modification is that equality in the estimate
\eqref{65} may now occur, namely
\begin{align}\label{74}
Na_1\ge p_n.
\end{align}
However, as noted after Proposition~\ref{prop4}, the constant \(3\) in
Lemma~\ref{lem6} may be replaced by an arbitrary \(\eta>0\).
Consequently, the same argument yields the required uniform Wronskian
lower bound.

Hence, \eqref{73} together with \eqref{74} imply that the proof goes
through under the weaker assumption \(N\ge l\). Since \(N\ge n\) is
still required in order to form the Wronskian, we conclude that
\[
N\ge\max\{n,l\}.
\qedhere
\]
\end{proof}
\begin{rem}
The above argument relies crucially on the fact that the tangent space has codimension one. After the linear reparametrisation, the unique missing transverse direction is realised by the pure derivative $\partial_{re_1}$, whose weight is $ra_1$. In higher codimension, several independent transverse directions may be required. Although the same reparametrisation argument can still be applied to obtain improved pivot estimates, only one such direction can be assigned to the first coordinate direction, while the remaining directions necessarily involve higher weights. Consequently, the estimate $p_n\le la_1$ is no longer available in general, and the resulting bounds do not improve the smoothness requirement of Theorem~\ref{thmfib}. Although stated for hypersurfaces, the proof only uses the rank condition $\operatorname{rank}Df(0)=n-1$.
\end{rem}
\subsection{Global fibering}
The passage from the local fibering result of Theorem~\ref{thmfib}
to the global statement of Theorem~\ref{thm:globalfib} relies on the
uniform quantitative nature of the construction. Roughly speaking,
the Wronskian lower bound obtained in the proof of
Theorem~\ref{thmfib}, together with Markov's inequality \citep[Theorem~1.1]{pierzchala2016markov}, shows that at
every sufficiently small scale a fixed positive proportion of each
fiber consists of Wronskian non-degenerate points. The Jacobian of the
fibering map then lifts this one-dimensional estimate to a positive
proportion estimate in the parameter space of the manifold. Finally, the Jessen--Marcinkiewicz--Zygmund strong differentiation theorem, in the forms given by \citep[Theorem~1]{kuchta2004martingale} and \citep[Theorem~6]{jessen1935note}, implies that a positive measure exceptional set cannot survive such a
fixed proportional loss at every sufficiently small scale. This
provides the globalisation mechanism underlying
Theorem~\ref{thm:globalfib}.

Throughout this section, we assume the hypotheses of
Theorem~\ref{thmfib} and retain the notation introduced in its proof.
For $u\in U_0$, we write
\[
W_u(t)
:=
W\bigl((f_1\circ\gamma_u)',\ldots,(f_n\circ\gamma_u)'\bigr)(t),
\]
and denote by $W_u^P$ the corresponding polynomial part introduced in
\eqref{14}. Throughout this section we fix $d\geq d_0$, where $d_0$ is as in Theorem~\ref{thmfib}. Lemmas~\ref{lem3}, \ref{lem5} and \ref{lem6} and Propositions~\ref{prop2}, \ref{prop3} and \ref{prop4} remain valid, with the same proofs and constants, for $\varepsilon=0$, that is, with $f_\varepsilon$ replaced by $f$: analyticity of $f_\varepsilon$ is never used, only Taylor's formula of order $N$ with integral remainder, which holds for $C^N$ maps. For $\varepsilon=0$ one has $\delta_\eta\equiv 0$ in the proof of Proposition~\ref{prop4}, so the term $C_W'\varepsilon$ there vanishes. In this section these results are applied with $\varepsilon=0$.
\subsubsection{A positive proportion of good \texorpdfstring{$t$}{t} values}
We first extract from the proof of Theorem~\ref{thmfib} a uniform
positive-proportion estimate for the set of $t$ values at which the
Wronskian does not vanish.
\begin{lem}[Uniform good $t$ proportion]\label{tpro}
There exist $\eta>0$ and $t_1>0$ such that for every $u\in U_0$ and every $0<t_*\leq t_1$, 
\[
\vert \lbrace t\in (0,t_*):W_u(t)\neq 0\rbrace\vert \geq \eta t_*.
\]
\end{lem}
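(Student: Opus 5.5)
We will prove this by applying Remez/Markov-type control to the polynomial part $W_u^P$, and then transferring the conclusion to $W_u$ via the $\varepsilon=0$ version of Proposition~\ref{prop4}. Work with $\varepsilon=0$ throughout; there the perturbative term vanishes. Proposition~\ref{prop2} then gives constants $t_0$ and $c_P$ such that for every $0<t_*\le t_0$,
\[
\sup_{[0,t_*]}|W_u^P|\ge c_P t_*^{Q_{\mathrm{good}}}.
\]
Proposition~\ref{prop4} gives
\[
\sup_{[0,t_*]}|W_u-W_u^P|\le C_W t_*^{\rho},
\]
with $\rho>Q_{\mathrm{good}}$ exactly as in the proof of Theorem~\ref{upnf}. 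Choose $t_1\le t_0$ so small that $\gamma_u([0,t_1])$ lies in the $\delta$-ball for all $u\in U_0$, and so that $C_W t_*^{\rho}\le \tfrac14 c_P t_*^{Q_{\mathrm{good}}}$ for all $t_*\le t_1$.

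The key step is to show that $|W_u^P|$ exceeds $\tfrac12 c_P t_*^{Q_{\mathrm{good}}}$ on a subset of $[0,t_*]$ of measure at least $\eta t_*$. On that subset we then have $|W_u|\ge \tfrac14 c_P t_*^{Q_{\mathrm{good}}}>0$, which gives the lemma. Here $W_u^P$ is a polynomial in $t$ of degree at most the fixed number $Q_{\max}$, which is independent of $u$ and $t_*$. Rescale by $t=t_*s$, $s\in[0,1]$, and set
\[
g(s):=t_*^{-Q_{\mathrm{good}}}W_u^P(t_*s).
\]
This is a polynomial of degree at most $Q_{\max}$ with $\sup_{[0,1]}|g|\ge c_P$. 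Markov's inequality \citep[Theorem~1.1]{pierzchala2016markov} bounds $\sup_{[0,1]}|g'|$ by $2Q_{\max}^2\sup_{[0,1]}|g|$. So if $|g(s_0)|=\|g\|_\infty$, then $|g|\ge \tfrac12\|g\|_\infty\ge \tfrac12 c_P$ on an interval around $s_0$ of length at least $\min\{1,1/(4Q_{\max}^2)\}$, intersected with $[0,1]$. This length is at least $\eta:=1/(4Q_{\max}^2)$. Alternatively, one could quote the Remez inequality directly, which bounds the measure of $\{|g|<\lambda\|g\|_\infty\}$ by a quantity depending only on the degree and $\lambda$. Undoing the scaling, the set where $|W_u^P|\ge \tfrac12 c_P t_*^{Q_{\mathrm{good}}}$ has measure at least $\eta t_*$, and removing the endpoint $0$ changes nothing.

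The main obstacle is making sure that the Markov constant does not depend on $u$, $t_*$ or the size of $g$. This is why we use the normalised sup-norm form of the inequality and rely only on the degree bound $Q_{\max}$. One subtlety is that $\sup|g|$ could be very large, so we normalise by $\|g\|_\infty$ rather than by $c_P$. The comparison error must then be measured relative to $\|g\|_\infty$, and this only helps: the error is at most $\tfrac14 c_P\le\tfrac14\|g\|_\infty$. So on the good interval we get $|W_u|\ge \tfrac14 t_*^{Q_{\mathrm{good}}}\|g\|_\infty>0$. All constants ($c_P$, $C_W$, $\rho$, $Q_{\max}$) are uniform in $u\in U_0$, so $\eta$ and $t_1$ are uniform as well.
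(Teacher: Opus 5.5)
Your proof is correct and is essentially the paper's argument. Both apply Markov's inequality to the polynomial Wronskian $W_u^P$ of degree at most $Q_{\max}$ and use the mean value theorem around a maximiser to obtain an interval of length at least $t_*/(4Q_{\max}^2)$ on which $|W_u^P|\ge\frac12\sup_{[0,t_*]}|W_u^P|\ge\frac12 c_P t_*^{Q_{\mathrm{good}}}$. Both then use the $\varepsilon=0$ case of Proposition~\ref{prop4}, with $\rho>Q_{\mathrm{good}}$, to transfer this to $W_u$, giving the same $\eta=1/(4Q_{\max}^2)$. The only difference is cosmetic: you rescale to $[0,1]$ before applying Markov, whereas the paper rescales the inequality to $[0,t_*]$. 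You should also require $\gamma_u([0,t_1])\subset B$ when choosing $t_1$, as the paper does.
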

\begin{proof}
For a real polynomial $P$ of degree at most $D$, Markov's inequality \citep[Theorem~1.1]{pierzchala2016markov} on $[-1,1]$ says
\begin{align}\label{75}
\Vert P'\Vert_{\infty,[-1,1]}\leq D^2\Vert P\Vert_{\infty,[-1,1]}.
\end{align}
We rescale the above inequality to $[0,t_*]$. Define
\[
Q(x)=P\left(\frac{t_*}{2}(x+1)\right),\quad x\in [-1,1].
\]
Hence
\[
\Vert Q\Vert_{\infty,[-1,1]}=\Vert P\Vert_{\infty,[0,t_*]}\quad \text{and}\quad \Vert Q'\Vert_{\infty,[-1,1]}=\frac{t_*}{2}\Vert P'\Vert_{\infty,[0,t_*]}.
\]
By \eqref{75}, we then have
\begin{align}\label{76}
\frac{t_*}{2}\Vert P'\Vert_{\infty,[0,t_*]}\leq D^2\Vert P\Vert_{\infty,[0,t_*]}.
\end{align}

Since $W_u^P$ is a polynomial of degree at most $Q_{\max}$, as defined in \eqref{93}, the rescaled version of Markov's polynomial inequality \eqref{76} gives
\begin{align}\label{77}
\sup_{t\in [0,t_*]}\left\vert \frac{d}{dt}W_u^P(t)\right\vert\leq \frac{2Q_{\max}^2}{t_*}\sup_{t\in [0,t_*]}\left\vert W_u^P(t)\right\vert.
\end{align}
Since $W_u^P$ is continuous on $[0,t_*]$, we can choose
$t_u\in[0,t_*]$ such that
\begin{align}\label{78}
\vert W_u^P(t_u)\vert
=
\sup_{t\in[0,t_*]}\vert W_u^P(t)\vert.
\end{align}
For any $t\in[0,t_*]$, the mean value theorem gives
\begin{align}\label{79}
\vert W_u^P(t)-W_u^P(t_u)\vert
\leq
\sup_{s\in[0,t_*]}
\left\vert \frac{d}{ds}W_u^P(s)\right\vert
\vert t-t_u\vert.
\end{align}
Now, using the reverse triangle inequality together with \eqref{77}, \eqref{78}, and \eqref{79}, we obtain
\begin{align*}
\vert W_u^P(t)\vert&\geq \vert W_u^P(t_u)\vert -\vert W_u^P(t)-W_u^P(t_u)\vert\\
&\geq \sup_{s\in [0,t_*]}\vert W_u^P(s)\vert-\frac{2Q_{\max}^2}{t_*}\sup_{s\in [0,t_*]}\vert W_u^P(s)\vert\cdot \vert t-t_u\vert.
\end{align*}
If $\vert t-t_u\vert\leq \frac{t_*}{4Q_{\max}^2}$, then, by the
above and \eqref{94}, we obtain
\begin{align}\label{80}
\vert W_u^P(t)\vert
\geq \frac{1}{2}\sup_{s\in[0,t_*]}\vert W_u^P(s)\vert
\geq \frac{c_Pt_*^{Q_{\text{good}}}}{2}.
\end{align}
Define
\[
J_u(t_*):=
\left[t_u-\frac{t_*}{4Q_{\max}^2},
      t_u+\frac{t_*}{4Q_{\max}^2}\right]\cap[0,t_*].
\]
Then
\[
\vert J_u(t_*)\vert\geq \frac{t_*}{4Q_{\max}^2},
\]
including when $t_u$ is an endpoint. Furthermore, \eqref{80} holds for every $t\in J_u(t_*)$. As shown in Proposition~\ref{prop2} and Proposition~\ref{prop4}, $c_P$ and $C_W$ are independent of $t_*>0$ and $u\in U_0$ once $U_0$ is fixed. Here we take $t_1\leq t_0$ with $t_0$ as in Proposition~\ref{prop2}, so small that $\gamma_u([0,t_1])\subset B$ and $\vert\gamma_u(t)\vert\leq \delta$ for all $u\in U_0$ and $t\in [0,t_1]$, with $\delta$ as in Proposition~\ref{prop3}. Recall that Proposition~\ref{prop4}, applied with $\varepsilon=0$, gives the estimate
\[
\vert W_{u}(t)-W_{u}^P(t)\vert \leq C_Wt^\rho,\quad \rho:=\min_{1\leq k\leq n}\rho_k,
\]
where
\[
\rho_k=kNa_1+P_{n-k}-\frac{n(n+1)}{2}.
\]
The power comparison in the proof of Theorem~\ref{upnf} gives $\rho>Q_{\text{good}}$. Hence, after decreasing $t_1$ if necessary, we have
\begin{align}\label{81}
C_Wt_*^\rho\leq \frac{c_P}{4}t_*^{Q_{\text{good}}},\quad 0<t_*\leq t_1.
\end{align}

For $t\in J_u(t_*)$, combining \eqref{80} and \eqref{81} gives
\[
\vert W_u(t)\vert\geq \frac{c_P}{4}t_*^{Q_{\text{good}}}>0.
\]
Therefore
\[
\vert\lbrace t\in (0,t_*):W_u(t)\neq 0\rbrace\vert\geq \vert J_u(t_*)\vert\geq \frac{t_*}{4Q_{\max}^2}.
\]
Thus the claim holds with $\eta=\frac{1}{4Q_{\max}^2}$.
\end{proof}
\subsubsection{From fiber proportion to \texorpdfstring{$m$}{m}-dimensional proportion}
We now lift the uniform one-dimensional proportion estimate along the
fibers to a corresponding positive proportion estimate in the
$m$-dimensional parameter space.
\begin{lem}[Uniform good parameter space proportion]\label{ppro}
Suppose that $U_0\subset\{1\}\times(\mathbb{R}^{\times})^{m-1}$ is compact and has positive $(m-1)$-dimensional Lebesgue measure, and write $u=(1,u_2,\ldots,u_m)\in U_0$. Let $\gamma_u(t)$ be as defined in \eqref{5}. Define the parameter space fibering map $\Gamma:(0,t_1)\times U_0\to\mathbb{R}^m$ by
\begin{align*}
\Gamma(t,u):=\gamma_u(t)
=
\bigl(t^{a_1},u_2t^{a_2},\ldots,u_mt^{a_m}\bigr).
\end{align*}
For $0<t_*\leq t_1$, let
\begin{align*}
G(t_*):=
\left\{
(t,u)\in(0,t_*)\times U_0:W_u(t)\neq0
\right\}.
\end{align*}
Then there exists $c_0>0$, independent of $t_*$, such that
\[
\vert\Gamma(G(t_*))\vert
\geq
c_0t_*^{a_1+\cdots+a_m}.
\]
\end{lem}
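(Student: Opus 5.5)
The plan is to use the change of variables formula for the injective smooth map $\Gamma$ on $(0,t_1)\times U_0$. Combined with Fubini and Lemma~\ref{tpro}, this reduces the $m$-dimensional estimate to the one-dimensional one along each fiber.

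\textbf{Step 1: injectivity of $\Gamma$.} Since $u_1=1$ is fixed, the first coordinate $x_1=t^{a_1}$ determines $t>0$ uniquely. The remaining coordinates $x_j=u_jt^{a_j}$ then determine $u_j=x_jt^{-a_j}$. Thus $\Gamma$ is injective on $(0,t_1)\times U_0$ and is a $C^\infty$ map on the open set $(0,t_1)\times(\mathbb{R}^\times)^{m-1}$.

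\textbf{Step 2: the Jacobian.} The Jacobian matrix with respect to $(t,u_2,\dots,u_m)$ has first column $(a_1t^{a_1-1},a_2u_2t^{a_2-1},\dots,a_mu_mt^{a_m-1})^T$. In the columns $j\ge2$, the only non-zero entry is $t^{a_j}$, in row $j$. Expanding along the first row, only the $(1,1)$ entry survives, so
\[
\det D\Gamma(t,u)=a_1t^{a_1-1}\prod_{j=2}^m t^{a_j}=a_1t^{a_1+\cdots+a_m-1}.
\]
This is positive and independent of $u$.

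\textbf{Step 3: measurability and area formula.} The set $G(t_*)$ is relatively open in $(0,t_*)\times U_0$, hence Borel. This holds provided $(t,u)\mapsto W_u(t)$ is continuous, which follows since $f\in C^N$ with $N\ge n$ and $\gamma_u$ depends smoothly on $(t,u)$. Because $\Gamma$ is injective and $C^1$, the change of variables (area) formula gives
\[
|\Gamma(G(t_*))|=\int_{G(t_*)}a_1t^{a_1+\cdots+a_m-1}\,dt\,du.
\]

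\textbf{Step 4: Fubini and the fiber estimate.} Applying Fubini, the right-hand side equals
\[
\int_{U_0}\int_{\{t\in(0,t_*):W_u(t)\neq0\}}a_1t^{A-1}\,dt\,du,\qquad A:=a_1+\cdots+a_m.
\]
The weight $t^{A-1}$ is small near $t=0$, so the crude bound $|\{t\in(0,t_*):W_u(t)\ne0\}|\ge\eta t_*$ alone is not quite enough. Instead I would apply Lemma~\ref{tpro} at the scale $t_*$: the good set has measure at least $\eta t_*$. Its intersection with $(\eta t_*/2,t_*)$ therefore has measure at least $\eta t_*/2$, and there $t^{A-1}\ge(\eta t_*/2)^{A-1}$. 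Hence the inner integral is at least $a_1(\eta/2)^{A}t_*^{A}$.

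Integrating over $U_0$ gives
\[
|\Gamma(G(t_*))|\ge c_0t_*^{A},\qquad c_0:=a_1(\eta/2)^{A}|U_0|_{m-1},
\]
and $c_0>0$ since $|U_0|_{m-1}>0$. The main (mild) obstacle is exactly this weighting near $t=0$, handled by discarding the initial segment $(0,\eta t_*/2)$. Secondary points are the measurability in Step 3 and the fact that Lemma~\ref{tpro} applies uniformly to $u\in U_0$, with $t_1$ as given there.
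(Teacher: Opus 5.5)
Your proof is correct and follows the paper's argument: injectivity of $\Gamma$, the triangular Jacobian $a_1t^{a_1+\cdots+a_m-1}$, the area formula with Fubini, and Lemma~\ref{tpro} on each fiber. Your explicit check that $G(t_*)$ is relatively open, hence Borel, covers a point the paper leaves implicit. The only difference is cosmetic. For $E_u:=\{t\in(0,t_*):W_u(t)\neq0\}$, the paper bounds the weighted fiber integral by the rearrangement inequality $\int_{E_u}t^{A-1}\,dt\ge\int_0^{\eta t_*}t^{A-1}\,dt=(\eta t_*)^{A}/A$, whereas you truncate at $\eta t_*/2$. Both arguments use only $|E_u|\ge\eta t_*$, so, contrary to your remark, that measure bound alone is enough.
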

\begin{proof}
For $t>0$, the map $\Gamma$ is injective, since its first coordinate determines $t$, and then each $u_j$ is recovered by dividing the $j$-th coordinate by $t^{a_j}$. Its Jacobian determinant is
\begin{align}\label{82}
\vert \det D\Gamma\vert =\left\vert \det\begin{pmatrix}
a_1t^{a_1-1} & 0 & 0 & \cdots & 0\\
a_2u_2t^{a_2-1} & t^{a_2} & 0 & \cdots & 0\\
a_3u_3t^{a_3-1} & 0 & t^{a_3} & \cdots & 0\\
\vdots & \vdots & \vdots & \ddots & \vdots\\
a_mu_mt^{a_m-1} & 0 & 0 & \cdots & t^{a_m}
\end{pmatrix}\right\vert=a_1t^{a_1+a_2+\cdots+a_m-1}.
\end{align}

For a fixed $u\in U_0$, Lemma~\ref{tpro} gives
\[
\vert E_u\vert:=\vert \lbrace t\in (0,t_*):W_u(t)\neq 0\rbrace\vert
\geq \eta t_*.
\]
Since $t\mapsto t^{a_1+\cdots+a_m-1}$ is increasing, given that the good $t$ proportion has length at least $\eta t_*$, placing the weight as far left as possible achieves the minimum. That is,
\begin{align}\label{83}
\int_{E_u}t^{a_1+\cdots+a_m-1}\,dt \geq \int_{0}^{\eta t_*}t^{a_1+\cdots+a_m-1}\,dt=\frac{(\eta t_*)^{a_1+\cdots+a_m}}{a_1+\cdots+a_m}.
\end{align}

By the area formula \citep[Theorem~3.8]{evans2025measure}, applied with $n=m$, $f=\Gamma$, and $A=G(t_*)$, together with the injectivity of $\Gamma$ for $t>0$, we obtain
\[
\vert\Gamma(G(t_*))\vert
=
\int_{G(t_*)}\vert\det D\Gamma(t,u)\vert\,dt\,du.
\]
By \eqref{82} and \eqref{83}, the above becomes
\begin{align}\label{84}
\vert \Gamma(G(t_*))\vert=a_1\int_{U_0}\int_{E_u}t^{a_1+\cdots+a_m-1}\,dt\,du&\geq a_1\int_{U_0}\frac{(\eta t_*)^{a_1+\cdots+a_m}}{a_1+\cdots+a_m}\,du\notag\\
&=a_1\vert U_0\vert\cdot \frac{(\eta t_*)^{a_1+\cdots+a_m}}{a_1+\cdots+a_m}.
\end{align}
Here $\vert U_0\vert$ denotes the $(m-1)$-dimensional Lebesgue measure
in the hyperplane $\lbrace u_1=1\rbrace$. Therefore the claim follows with
\[
c_0
:=
\frac{a_1\vert U_0\vert}{a_1+\cdots+a_m}
\eta^{a_1+\cdots+a_m}>0,
\]
which is independent of $t_*$.
\end{proof}
\subsubsection{Proof of Theorem~\ref{thm:globalfib}}
We now compare the good parameter proportion obtained in
Lemma~\ref{ppro} with a family of axis parallel rectangles shrinking
to the base point.
\begin{proof}
By compactness of $U_0$ from Lemma~\ref{ppro}, choose
\[
C:=\max\lbrace 1,\sup_{u\in U_0}\vert u_2\vert,\dots,\sup_{u\in U_0}\vert u_m\vert\rbrace.
\]
Then $\Gamma\left((0,t_*)\times U_0\right)\subset R(t_*)$, where
\begin{align}\label{85}
R(t_*):=[-t_*^{a_1},t_*^{a_1}]\times \prod_{j=2}^{m}[-Ct_*^{a_j},Ct_*^{a_j}].
\end{align}
The above has measure
\begin{align}\label{86}
\vert R(t_*)\vert =2^m C^{m-1}t_*^{a_1+\cdots+a_m}.
\end{align}
Combining \eqref{84} and \eqref{86}, we obtain
\begin{align}\label{87}
\vert \Gamma(G(t_*))\vert \geq c\vert R(t_*)\vert,
\end{align}
where
\begin{align*}
c:=\frac{a_1\vert U_0\vert\eta^{a_1+\cdots+a_m}}
{(a_1+\cdots+a_m)2^mC^{m-1}}>0.
\end{align*}
Notice that $c$ is independent of $t_*$. We now use this uniform
positive proportion estimate at arbitrarily small scales to obtain
a density point contradiction, which forms the core of the
globalisation mechanism.

Take a bounded non-empty open subset $V\subset\mathbb{R}^m$ and a
$C^N$ parametrization $f:V\to\mathcal{M}$. Define
\[
\Omega
:=
\left\{
x\in V:
\begin{array}{c}
\exists\, I,\ \phi\in C^N(I,\mathcal M),\ t_x\in I
\text{ such that}\\[2pt]
\phi(t_x)=f(x),\quad
W(\phi_1',\ldots,\phi_n')(t_x)\neq0
\end{array}
\right\},
\]
and let
\[
E:=V\setminus\Omega.
\]
The set $\Omega$ is analytic: indeed, after a suitable reparametrization of the interval, the conditions defining an
admissible pair consisting of a parameter point and a $C^N$ curve
through its image with non-vanishing Wronskian are Borel, and $\Omega$
is the projection of the set of such pairs onto the parameter space.
Hence $\Omega$, and therefore $E$, is Lebesgue measurable
\citep[Theorem~21.10]{kechris2012classical}. The set $E$ therefore consists of those parameter points whose images
do not lie on any $C^N$ curve in $\mathcal M$ with non-vanishing
Wronskian at the corresponding point. Assume, for contradiction, that
\[
|E|>0.
\]

Recall that a point $x\in E$ is called a strong density point of $E$ if 
\[
\frac{\vert E\cap R\vert}{\vert R\vert}\to 1
\]
as axis parallel rectangles $R\ni x$ shrink to $x$. By the Jessen--Marcinkiewicz--Zygmund strong differentiation theorem
\citep[Theorem~1]{kuchta2004martingale}, applied to the characteristic
function $\mathds{1}_E$ of a bounded measurable set, almost every $x\in E$ is a strong density
point of $E$.

By the hypothesis of Theorem~\ref{thm:globalfib} and the definition of
non-degeneracy, $f$ is $l$ non-degenerate at almost every point of
$V$, and the smoothness threshold required in Theorem~\ref{thmfib}
is satisfied. Thus, one may choose $x_0\in E$ such that
\begin{itemize}
\item[1.] $f$ is $l$ non-degenerate at $x_0$;
\item[2.] $x_0$ is a strong density point of $E$.
\end{itemize}
Our model rectangle $R(t_*)$ defined in \eqref{85} is centred at $0$.
Hence, for the translated rectangles $x_0+R(t_*)$, we have
\begin{align}\label{88}
\frac{\vert E\cap (x_0+R(t_*))\vert}
{\vert R(t_*)\vert}\to 1
\end{align}
as $t_*\to0$. Translating the parameter coordinates so that $x_0$ becomes the origin, we may apply the preceding construction at $x_0$.
Consequently, for every sufficiently small $t_*$,
\[
x_0+\Gamma(G(t_*))\subset x_0+R(t_*).
\]
By \eqref{87}, we have
\begin{align}\label{89}
\vert x_0+\Gamma(G(t_*))\vert
\geq c_{x_0}\vert R(t_*)\vert,
\end{align}
where $c_{x_0}>0$ is independent of $t_*$. By definition, we have
\[
\vert E\cap (x_0+\Gamma(G(t_*)))\vert=0.
\]
Hence, by \eqref{89}
\[
\vert E\cap (x_0+R(t_*))\vert\leq \vert R(t_*)\vert-\vert x_0+\Gamma(G(t_*))\vert\leq (1-c_{x_0})\vert R(t_*)\vert.
\]
Therefore
\begin{align}\label{90}
\frac{\vert E\cap (x_0+R(t_*))\vert}{\vert R(t_*)\vert}\leq 1-c_{x_0}
\end{align}
for every sufficiently small $t_*$. Since $c_{x_0}>0$ is fixed, \eqref{90} contradicts \eqref{88}. Thus $\vert E\vert=0$. Finally, $f$ is locally Lipschitz, so $f(E)$ has zero $m$-dimensional Hausdorff measure, and every point of $\mathcal{M}\setminus f(E)\subset f(\Omega)$ has the property stated in Theorem~\ref{thm:globalfib}.
\end{proof}
\section{Applications to Diophantine approximation}
Theorem~\ref{upnf} and Theorem~\ref{thmfib} provide a local uniform quantitative Wronskian fibering mechanism for \(C^N\) manifolds under the essentially optimal smoothness threshold \(N\geq\max\{n,l+1\}\). Furthermore, Theorem~\ref{thm:globalfib} provides a global fibering mechanism, showing that almost every point of the manifold lies on a curve which is Wronskian non-degenerate at that point. Consequently, existing arguments based on fibering, particularly those concerning
dimension theoretic and measure theoretic results, previously restricted to the analytic setting or requiring relatively high smoothness, may now be revisited under essentially optimal finite smoothness assumptions.
\subsection{Badly approximable points on manifolds}
The notion of badly approximable numbers originates from Dirichlet’s theorem; see \citep[Theorem 1.2]{beresnevich2016metricdiophantineapproximationaspects}. A real number $x$ is called badly approximable if there exists a constant $c=c(x)>0$ such that
\[
\Big\vert x-\frac{p}{q}\Big\vert \geq \frac{c}{q^2}
\]
for all $(p,q)\in \mathbb{Z}\times\mathbb{N}$. The set of badly approximable real numbers will be denoted by
\[
\mathbf{Bad}:=\lbrace x\in \mathbb{R}\setminus \mathbb{Q}:\inf_{q\in \mathbb{N}}q\Vert qx\Vert>0\rbrace,
\]
where $\Vert x\Vert$ denotes the distance of $x$ from the nearest integer. By Khintchine's theorem \citep{Khintchine1924}, $\mathbf{Bad}$ has Lebesgue measure zero, while Jarník \citep{Ja1928} proved that it nevertheless has full Hausdorff dimension,
\[
\dim \mathbf{Bad}=1.
\]

The higher-dimensional analogue of badly approximable numbers is motivated by the multidimensional version of Dirichlet’s theorem, which follows from Minkowski’s linear forms theorem; see \citep[Theorem 4.4]{beresnevich2016metricdiophantineapproximationaspects}. Given a weight vector $r=(r_1,\dots,r_n)\in\mathcal R_n$, where
\[
\mathcal R_n=\left\{r\in\mathbb R^n:r_i\ge0,\ \sum_{i=1}^nr_i=1\right\},
\]
the corresponding set of $r$-badly approximable points is defined by
\begin{align*}
\mathbf{Bad}(r)=\lbrace x=(x_1,\dots,x_n)&\in \mathbb{R}^n:\,\exists\,c=c(x)>0\text{ such that}\\
& \max_{1\leq i\leq n}\Vert qx_i\Vert^{1/r_i}\geq cq^{-1},\,\forall q\in \mathbb{N}\rbrace.
\end{align*}
Schmidt \citep{Schmidt1966ONBA} introduced his eponymous game and proved that the symmetric set $\mathbf{Bad}(1/n,\ldots,1/n)$ is winning, a property which in particular implies full Hausdorff dimension. More generally, one is interested in the intersection of weighted badly approximable points with manifolds. Some non-degeneracy assumption is necessary for meaningful dimension statements, since degenerate manifolds may lie in rational affine subspaces whose intersection with $\mathbf{Bad}(r)$ is empty.

Using Cantor-type constructions \citep{d9b46a81-48db-38e7-aded-22cc3c327ebe} together with estimates for dangerous intervals, Beresnevich \citep[Theorem~2]{Beresnevich_2015} established the following full Hausdorff dimension result for weighted badly approximable points on non-degenerate curves.
\begin{thm}\label{thm6}
Let $n\in \mathbb{N}$, $n\geq 2$, $I\subset \mathbb{R}$ be an open interval and $\mathcal{F}_n(I)$ be a finite family of maps defined on $I$ Wronskian non-degenerate at the same point $x_0\in I$. Let $W$ be a finite or countable subset of $\mathcal{R}_n$ such that
\begin{align}\label{1}
\inf\lbrace \tau(r):=\min\lbrace r_i:\,r_i\neq 0\rbrace:\,r\in W\rbrace>0.
\end{align}
Then
\[
\dim\bigcap_{f\in \mathcal{F}_n(I)}\bigcap_{r\in W}f^{-1}(\mathbf{Bad}(r))=1.
\]
\end{thm}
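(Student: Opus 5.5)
The upper bound is trivial: the set in question is a subset of $I$, so its dimension is at most $1$. The content is the lower bound. My plan follows Beresnevich's Cantor-type framework. I would show that each individual set $f^{-1}(\mathbf{Bad}(r))$ contains, on a fixed small interval, a Cantor-rich set whose parameters depend only on a lower bound for $\tau(r)$ and on uniform non-degeneracy constants of $f$. I would then use that the class of Cantor-rich sets with controlled parameters is stable under countable intersections and that each such set has full dimension. Condition \eqref{1} is exactly what makes these parameters uniform over $r\in W$.

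\textbf{Step 1 (uniform localisation).} Since $\mathcal F_n(I)$ is finite and every $f$ in it satisfies $W(f_1',\dots,f_n')(x_0)\neq0$, continuity gives a compact interval $J\ni x_0$ and a constant $\kappa>0$ such that, for all $f\in\mathcal F_n(I)$ and $x\in J$,
\[
|W(f_1',\dots,f_n')(x)|\ge\kappa .
\]
On $J$ I would also have uniform upper bounds on the first $n$ derivatives of each $f$. Everything afterwards is carried out on $J$.

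\textbf{Step 2 (Cantor construction).} Fix $f$ and $r$, and a large integer $R$. Build nested collections of closed intervals of length $R^{-k}$ at level $k$. The intervals to be removed at level $k$ are the \emph{dangerous intervals}: for $q$ in a dyadic-type range $R^{k-1}\le q<R^k$, these are the neighbourhoods of points $x$ where
\[
\max_i\|qf_i(x)\|^{1/r_i}<c\,q^{-1}.
\]
The key requirement is that each level-$(k-1)$ interval meets only boundedly many (in terms of $R$, with a saving) dangerous intervals of level $k$. This is enough to keep a positive proportion of subintervals at every level, which is the Cantor-rich condition.

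\textbf{Step 3 (counting dangerous intervals; the main obstacle).} This is where Wronskian non-degeneracy enters, and where I expect the difficulty to lie. I would pass to the dual form: the condition on $x$ says that the point $(f_1(x),\dots,f_n(x))$ is very close to a rational point $p/q$ with weighted precision. One then needs that, over a subinterval of length about $R^{-k}$, the rational points $p/q$ with $q\asymp R^k$ that are close to the curve all lie on a single rational affine hyperplane.

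I would argue this by geometry of numbers (a Davenport-type determinant argument): $n+1$ such rational points that are affinely independent would span a simplex of non-zero integer-scaled volume, contradicting the small size forced by closeness to a short arc of the curve. The uniform lower bound $\kappa$ on the Wronskian then shows that the curve leaves any hyperplane quickly, at a rate controlled by Taylor expansion to order $n$. Consequently only a bounded number of short intervals can be dangerous.

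Making the weights $r_i$ compatible with this volume argument is the delicate part. The quantity $\tau(r)$ enters the exponents in the volume argument, and this is why condition \eqref{1} is needed.

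\textbf{Step 4 (conclusion).} With parameters uniform over $f\in\mathcal F_n(I)$ and $r\in W$, the intersection over the countably many pairs $(f,r)$ is again Cantor-rich in $J$. Hence it has Hausdorff dimension $1$, and so does the full intersection.
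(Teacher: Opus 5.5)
Your outer architecture is right. The paper does not prove Theorem~\ref{thm6}; it quotes it from Beresnevich, whose proof is a Cantor-rich construction driven by dangerous intervals, and your localisation, uniformity in $r$ and stability under countable intersections all fit that. The genuine gap is in Step~3, in passing from ``the relevant rational points lie on one rational hyperplane $\{a_0+a\cdot y=0\}$'' to ``only boundedly many dangerous intervals''.

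The Wronskian bound only gives $\max_{1\le j\le n}|a\cdot f^{(j)}(x)|\gg|a|$. So near a point where this hyperplane nearly osculates the curve, the distance from $f(x)$ to it can stay $\ll\lambda^n$ along an arc of length $\lambda$. At the scales of your construction this is far smaller than the widths $c^{r_i}q^{-1-r_i}$ of the dangerous boxes. The curve then never leaves the dangerous neighbourhood of the hyperplane, and the hyperplane puts no bound on how many of its rational points with $q$ in the current range sit close to the arc; that number grows with $q$ relative to $|a|$.

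This already happens for $n=2$ with $r_1,r_2>0$. A curve of non-zero curvature nearly tangent to a rational line $L$ stays within $\delta\asymp c^{\max_i r_i}q^{-1-\max_i r_i}$ of $L$ along a stretch of length $\asymp\sqrt\delta$. This stretch is longer than any single dangerous interval (length $\ll q^{-1-\min_i r_i}$) by a positive power of $q$, and essentially every rational point of $L$ along it is dangerous. Once $q$ is large compared with the height of $L$, a parent interval inside the stretch meets more dangerous intervals than it has children. So the one-level bound required in your Step~2 is not merely unproved but fails in general.

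What is missing is a separate mechanism for such clusters. One attaches a dangerous set to each rational hyperplane itself: roughly, the stretch along which the curve stays inside its neighbourhood, detected through the integer vector $(a_0,a)$ given by a Minkowski-type dual argument. This set is removed at an \emph{earlier} level determined by $|a|$. One then has to show that on a given interval only few hyperplanes of a given height can be nearly tangent to the curve. That count is where Wronskian non-degeneracy really does its work, through quantitative non-divergence estimates of Bernik--Kleinbock--Margulis type. The uniformity in $r$ you invoke in Step~4 must also be established for these removals.

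Removal at earlier levels is exactly what the Badziahin--Velani generalised Cantor sets allow. Level-$k$ intervals may be discarded from level-$m$ intervals for every $m<k$, subject to a summability condition weighted by powers of $R^{-1}$; your scheme uses only $m=k-1$, which is not enough.

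A smaller point concerns your scaling. With level intervals of length $R^{-k}$ and $q\asymp R^k$, a single level interval has on average of order $R^{k\max_i r_i}$ rational points in its weighted neighbourhood, for fixed $c$. So the simplex argument fails at that scale too. The level lengths must be tied to the dangerous-interval lengths, roughly $c^{\max_i r_i}q^{-1-\max_i r_i}$, with $c$ small in terms of $R$, before the one-hyperplane statement becomes true.
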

Subsequently, Yang \citep[Theorem 1.8]{yang2019badlyapproximablepointsmanifolds} established the corresponding Hausdorff dimension result for arbitrary $C^n$ non-degenerate manifolds, simultaneously removing both the analyticity assumption and the restriction \eqref{1} on the weights. In contrast to Beresnevich's approach, Yang's proof relies on homogeneous dynamics and the linearization technique rather than the fibering method. Thus, while the Hausdorff dimension problem has since been resolved for $C^n$ non-degenerate manifolds, the analytic fibering mechanism itself remained unavailable beyond the analytic category.

To lift the curve level results to manifolds, we use the following version of Marstrand's slicing lemma; see \citep[Theorem 10.11]{Mattila_1995}.
\begin{lem}[Marstrand's slicing lemma]\label{lem1}
Let $m>1$, $S\subset\mathbb{R}^m$, $s>0$, $U\subset\mathbb{R}^{m-1}$ such that $\dim \lbrace (t,u_2,\dots,u_m)\in S \rbrace\geq s$ for each $(u_2,\dots, u_m)\in U$. Then
\[
\dim S\geq \dim U+s.
\]
\end{lem}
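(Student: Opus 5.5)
We plan to argue by contradiction, comparing Hausdorff measures of $S$ with those of its slices via an Eilenberg-type (Fubini) inequality. Suppose $\dim S<\dim U+s$. Choose $0<t<\dim U$ and $0<s'<s$ with $t+s'>\dim S$, so that $\mathcal H^{t+s'}(S)=0$. For every $u=(u_2,\dots,u_m)\in U$ the slice $S_u:=\{x_1:(x_1,u)\in S\}$ satisfies $\dim S_u\geq s>s'$, hence $\mathcal H^{s'}(S_u)=\infty$. The goal is to show instead that $\mathcal H^{s'}(S_u)=0$ for $\mu$-almost every $u$, for a suitable non-zero measure $\mu$ carried by $U$.

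\textbf{Step 1 (Frostman measure on $U$).} Since $t<\dim U$, Frostman's lemma gives a non-zero finite Borel measure $\mu$ with $\mu(\mathbb R^{m-1}\setminus U)=0$ and $\mu(B(u,r))\leq r^t$ for all $u$ and $r>0$. This needs $U$ to be Borel, or at least analytic; we will assume this, as is the case in the fibering applications (where $U$ is a compact parameter set, or a Borel set of parameters). This is the only place where regularity of $U$ enters.

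\textbf{Step 2 (the slicing inequality).} We will prove
\[
\int^{*}\mathcal H^{s'}(S_u)\,d\mu(u)\leq C\,\mathcal H^{t+s'}(S),
\]
where $\int^*$ denotes the upper integral and $C$ depends only on $m$. Fix $\delta>0$ and cover $S$ by sets $E_i$ with $\operatorname{diam}E_i=d_i<\delta$ and $\sum_i d_i^{t+s'}\leq \mathcal H^{t+s'}_\delta(S)+\delta$. Let $P_i$ be the projection of $E_i$ to the last $m-1$ coordinates. Then $P_i$ lies in a ball of radius $d_i$, and $(E_i)_u$ is empty unless $u\in P_i$, in which case it has diameter at most $d_i$. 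Hence
\[
\mathcal H^{s'}_\delta(S_u)\leq\sum_i d_i^{s'}\mathds 1_{P_i}(u).
\]
Integrating against $\mu$, using monotone convergence for the sum and the Frostman bound $\mu(P_i)\leq d_i^t$, we get
\[
\int^*\mathcal H^{s'}_\delta(S_u)\,d\mu\leq \sum_i d_i^{t+s'}.
\]
Letting $\delta\to0$ gives the inequality, since $\mathcal H^{s'}_\delta\uparrow\mathcal H^{s'}$ and monotone convergence holds for upper integrals of increasing sequences. We will replace $\mathds 1_{P_i}$ by the indicator of a containing closed ball, so that measurability of the $P_i$ is not an issue.

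\textbf{Step 3 (conclusion).} The inequality gives $\int^*\mathcal H^{s'}(S_u)\,d\mu=0$, so $\mathcal H^{s'}(S_u)=0$ for $\mu$-almost every $u$. But $\mu$ is non-zero and concentrated on $U$, where $\mathcal H^{s'}(S_u)=\infty$. This is a contradiction, so $\dim S\geq t+s'$. Letting $t\uparrow\dim U$ and $s'\uparrow s$ then yields $\dim S\geq\dim U+s$.

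The main obstacle is Step 2, specifically making the covering argument compatible with integration in $u$. Integrating directly against $\mathcal H^t$ fails, because $\mathcal H^t$ of a ball in $\mathbb R^{m-1}$ is infinite when $t<m-1$. This is why we pass through a Frostman measure, which converts the projected covering sets into the bound $\mu(P_i)\lesssim d_i^t$. The price is the measurability/analyticity hypothesis on $U$ needed for Frostman's lemma. Alternatively, one could simply invoke \citep[Theorem 10.11]{Mattila_1995} as cited.
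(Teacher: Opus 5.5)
The paper gives no proof of this lemma; it only quotes it from \citep[Theorem~10.11]{Mattila_1995}. Your argument is therefore a genuinely different, self-contained route, and where it applies it is correct. The covering inequality $\mathcal{H}^{s'}_\delta(S_u)\le\sum_i d_i^{s'}\mathds{1}_{\overline{B}_i}(u)$, integration against a Frostman measure, and monotone convergence for upper integrals together give $\int^*\mathcal{H}^{s'}(S_u)\,d\mu(u)\le\mathcal{H}^{t+s'}(S)$ with no measurability assumption on $S$, and Step~3 then yields the contradiction. Your choice $0<t<\dim U$ leaves out the case $\dim U=0$, but that case is trivial: $S$ contains an isometric copy of $S_u$ for any $u\in U$.

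The genuine gap is the extra hypothesis in Step~1. The lemma as stated allows an arbitrary $U\subset\mathbb{R}^{m-1}$, and for such $U$ a Frostman measure carried by $U$ need not exist. For example, let $U\subset\mathbb{R}$ be a Bernstein set. Then $\dim U=1$, yet every Borel subset of $U$ is countable. Hence any finite Borel measure with $\mu(\mathbb{R}\setminus U)=0$ is purely atomic, and it cannot be non-zero while satisfying $\mu(B(u,r))\le r^t$ with $t>0$. So your proof establishes the lemma only for analytic $U$. This is enough for the paper: in the proof of Theorem~\ref{bad} the set $U$ is relatively open, and there even Lebesgue measure on $U$ would do.

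To obtain the statement as written, replace the measure by a weighted covering argument; this is the standard way to prove the general version. Keep your covering inequality and set $U_\delta:=\{u\in U:\mathcal{H}^{s'}_\delta(S_u)\ge 1\}$. The inequality says that the balls $\overline{B}_i$, with weights $d_i^{s'}$, form a weighted cover of $U_\delta$. The comparison between weighted Hausdorff content and $\mathcal{H}^t_\infty$, which holds for arbitrary subsets of $\mathbb{R}^{m-1}$, then gives $\sum_i d_i^{t+s'}\ge c\,\mathcal{H}^t_\infty(U_\delta)$ with $c=c(m,t)>0$. Since $U_\delta$ increases to $U$ as $\delta\downarrow 0$ and $\mathcal{H}^t$ is a regular outer measure, some $U_{\delta_0}$ satisfies $\mathcal{H}^t(U_{\delta_0})>0$. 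Hence $\mathcal{H}^t_\infty(U_\delta)\ge\mathcal{H}^t_\infty(U_{\delta_0})>0$ for all $\delta<\delta_0$, and therefore $\mathcal{H}^{t+s'}(S)>0$, with no regularity assumed on $U$ or $S$.
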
 
Combining the analytic fibering lemma \ref{lem2} with Marstrand's slicing lemma \ref{lem1}, Beresnevich \citep[Theorem~1]{Beresnevich_2015} extended Theorem~\ref{thm6} from curves to analytic non-degenerate manifolds, obtaining the corresponding full Hausdorff dimension result under the restriction \eqref{1}.

Replacing the analytic fibering lemma \ref{lem2} by Theorem~\ref{thmfib}, and using Marstrand's slicing lemma \ref{lem1} together with Yang's result for non-degenerate curves \citep[Theorem~1.8]{yang2019badlyapproximablepointsmanifolds}, yields the following finite smoothness analogue. In particular, Yang's result allows us to remove the restriction \eqref{1} on the weights.
\begin{thm}[Badly approximable points on smooth manifolds]\label{bad}
Let $m\le n$ be positive integers, and let $B$ be an open ball in $\mathbb R^m$ centred at $0$. Let $f:\overline{B}\to\mathbb R^n$ be a $C^N$ map, where $N$ satisfies \eqref{45}. Suppose there exist non-zero multi-indices $\alpha_1,\dots,\alpha_n$ such that \eqref{46} holds. Let $W$ be a finite or countable subset of $\mathcal{R}_n$. Then
\[
\dim \bigcap_{r\in W}f^{-1}(\mathbf{Bad}(r))=m.
\]  
\end{thm}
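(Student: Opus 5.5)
The upper bound $\dim\le m$ is trivial: $\bigcap_{r}f^{-1}(\mathbf{Bad}(r))\subset\overline B\subset\mathbb R^m$. The work is the lower bound. The plan is to fiber the parameter ball by the curves $\gamma_u$ of Theorem~\ref{thmfib}, apply Yang's curve result \citep[Theorem~1.8]{yang2019badlyapproximablepointsmanifolds} on each fibre, and reassemble with Marstrand's slicing lemma~\ref{lem1} in coordinates adapted to the fibration.

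\textbf{Step 1 (fibres).} Fix $d\ge d_0$. Since the curves depend only on the direction of $u$ up to reparametrising $t$, take a compact $U_0\subset\{1\}\times(\mathbb R^\times)^{m-1}$ with non-empty interior in that hyperplane, for instance a small closed box around $(1,\dots,1)$. Theorem~\ref{thmfib} and the subsequent remark give $t_*$ and $c_*$ such that, for every $u\in U_0$, the curve $\phi_u=f\circ\gamma_u$ is defined on $[0,t_*]$ with $\gamma_u([0,t_*])\subset B$. Moreover, $\phi_u$ is Wronskian non-degenerate on an open interval $I_u\subset(0,t_*)$. Since $N\ge n$, $\phi_u$ is $C^n$, so Yang's theorem for $C^n$ non-degenerate curves applies on $I_u$ and gives $\dim\bigl(I_u\cap\bigcap_{r\in W}\phi_u^{-1}(\mathbf{Bad}(r))\bigr)=1$. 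This holds for arbitrary countable $W$, with no restriction \eqref{1}.

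\textbf{Step 2 (change of variables).} Use the map $\Gamma(t,u)=\gamma_u(t)$ of Lemma~\ref{ppro}. On $(0,t_*)\times U_0$ it is injective and smooth, with Jacobian $a_1t^{a_1+\cdots+a_m-1}\neq0$, so it is a diffeomorphism onto its image, which is open in $B$. It is therefore locally bi-Lipschitz on compact subsets and preserves Hausdorff dimension locally. Set
\[S:=\Bigl\{(t,u)\in(0,t_*)\times U_0:\ \Gamma(t,u)\in\textstyle\bigcap_{r\in W}f^{-1}(\mathbf{Bad}(r))\Bigr\}.\]
For each $u\in U_0$, the slice $\{t:(t,u)\in S\}$ contains the Step~1 set, which has dimension $1$. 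Lemma~\ref{lem1} with $s=1$ and $\dim U_0=m-1$ gives $\dim S\ge m$. Strictly, Lemma~\ref{lem1} needs $s$ a fixed positive number, so apply it with $s=1-\kappa$ for each $\kappa>0$ and let $\kappa\to0$.

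\textbf{Step 3 (transfer back).} Since $\dim S$ is realised on some subset $S\cap([\tau,t_*-\tau]\times U_0)$ up to arbitrarily small loss, by countable stability, and $\Gamma$ is bi-Lipschitz there, we get $\dim\Gamma(S)\ge m$. As $\Gamma(S)\subset\bigcap_r f^{-1}(\mathbf{Bad}(r))$, this gives the lower bound.

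\textbf{Main obstacle.} The delicate point is matching hypotheses. Yang's theorem must apply to each $\phi_u$ on $I_u$ with only $C^n$ regularity, and the intervals $I_u$ vary with $u$. The slicing lemma as stated only requires each slice to have dimension $\ge s$, so no measurability of $u\mapsto I_u$ is needed; I would check that the version in \citep{Mattila_1995} indeed requires no more than this. The $m=1$ case is just Yang's theorem directly and needs no fibering.
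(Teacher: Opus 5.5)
Your proposal is correct and follows essentially the same route as the paper: fix $u_1=1$, fibre by $\gamma_u$ via Theorem~\ref{thmfib}, apply Yang's curve theorem on each fibre, lift with Marstrand's slicing lemma~\ref{lem1}, and transfer back through the bi-Lipschitz map $(t,u)\mapsto\gamma_u(t)$. The only difference is technical. The paper uses joint continuity of the Wronskian in $(u,t)$ to obtain a single interval $I$ with $\overline I\subset(0,t_*)$ over a neighbourhood $U$ of some $u_0$, so the map is bi-Lipschitz on the product $I\times U$ at once; you instead keep fibre-dependent intervals $I_u$ and recover bi-Lipschitz control by countable stability on $[\tau,t_*-\tau]\times U_0$, which also works (and your $s=1-\kappa$ device is unnecessary for Lemma~\ref{lem1} as stated, since $s=1$ is allowed, but it is harmless).
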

In particular, this extends \citep[Theorem~1]{Beresnevich_2015} to the finite smoothness setting within the fibering framework.
\begin{rem}
Theorem~\ref{bad} is stated for a single manifold only for clarity. The same argument applies to any finite collection of sufficiently smooth manifolds by considering a finite product of Wronskians. No new ideas are required, although this may lead to a higher smoothness requirement.
\end{rem}
\begin{proof}[Proof of Theorem~\ref{bad}]
The case $m=1$ is immediate, so assume henceforth that $m\geq2$ and fix $u_1=1$. Let $d_0$ be as in Theorem~\ref{thmfib}, fix $d\geq d_0$ and fix a compact set $U_0\subset\lbrace 1\rbrace\times(\mathbb{R}^\times)^{m-1}\subset (\mathbb R^\times)^m$ with non-empty relative interior. By Theorem~\ref{thmfib}, there exists $0<t_*\leq 1$ such that 
\[
\gamma_u([0,t_*])\subset B,
\]
for every $u=(u_1,\dots,u_m)\in U_0$, where $\gamma_u$ is defined in \eqref{5}, and such that, for every
$u\in U_0$, the fiber
\[
\phi_u(t):=f(\gamma_u(t))
\]
is Wronskian non-degenerate at some point
$t_u\in(0,t_*)$.

Fix $u_0$ in the relative interior of $U_0$. By the joint continuity of the Wronskian in
$(u,t)$, there exist a neighbourhood $U\subset U_0$ of $u_0$ and
an open interval $I$ with $\overline{I}\subset(0,t_*)$ containing $t_{u_0}$ such that
$\phi_u$ is Wronskian non-degenerate on $I$ for every $u\in U$.
Define
\[
D:=
\bigl\{
(t,u_2,\dots,u_m)\in\mathbb R^m:
t\in I,\;
u=(u_1,\dots,u_m)\in U
\bigr\}.
\]
Thus, \citep[Theorem~1.8]{yang2019badlyapproximablepointsmanifolds} is applicable and
\[
S_u:=I\cap\bigcap_{r\in W}\phi_u^{-1}(\mathbf{Bad}(r))
\]
has Hausdorff dimension $1$ for every $u\in U$. Denote
\[
S=\lbrace (t,u_2,\dots,u_m):\,t\in S_u,\ u\in U\rbrace\subset D.
\]
Since $\dim S_u=1$ for every $u\in U$, Marstrand's slicing lemma
\ref{lem1} implies that $\dim S\geq \dim U+1$. Since $U$ is a
neighbourhood in $(\mathbb R^\times)^{m-1}$, we have
$\dim U=m-1$, and therefore $\dim S\geq m$. Let $S'\subset B$ be the image of $S$ under the map
\begin{align}\label{39}
(t,u_2,\dots,u_m)\mapsto (t^{1+d^m},u_2t^{d+d^m},u_3t^{d^2+d^m},\dots,u_mt^{d^{m-1}+d^m}).
\end{align}
By construction,
\[
S'\subset \bigcap_{r\in W}f^{-1}(\mathbf{Bad}(r)).
\]
The map \eqref{39} is injective on $D$ and both it and its inverse (on its image) have bounded continuous derivatives. Hence it is bi-Lipschitz on $D$. It is well known that bi-Lipschitz maps preserve Hausdorff dimension, see for example \citep[Corollary 2.4]{falconer2013fractal}. Therefore, $\dim S'=\dim S\geq m$. Since
\[
S'
\subset
\bigcap_{r\in W}f^{-1}(\mathbf{Bad}(r))
\subset B\subset\mathbb R^m,
\]
we obtain
\[
m
\le
\dim\bigcap_{r\in W}f^{-1}(\mathbf{Bad}(r))
\le m.
\]
Therefore
\[
\dim\bigcap_{r\in W}f^{-1}(\mathbf{Bad}(r))
=
m.\qedhere
\]
\end{proof}
\subsection{The convergent Gallagher property}
During the preparation of this work, we became aware of a recent fibering lemma due to Chow, Srivastava, Technau and Yu \citep[Lemma A.1]{chow2026rationalpointshyperbolicregions}, which was used to establish the convergent Gallagher property for smooth non-degenerate manifolds. We briefly compare their fibering mechanism with the global fibering mechanism provided by Theorem~\ref{thm:globalfib}.

The two constructions are fundamentally different. The fibering lemma
in \citep[Lemma~A.1]{chow2026rationalpointshyperbolicregions} is a
direct construction producing curves that are \(L\) non-degenerate
for some \(L\) depending on the manifold dimension \(m\) and the
original order of non-degeneracy \(l\). By
\citep[Proposition~2.13]{beresnevich2023khintchine}, these curves are
Wronskian non-degenerate away from at most a countable set of points.
In contrast, Theorem~\ref{thm:globalfib} shows, under the essentially
optimal smoothness threshold \(N\geq\max\{n,l+1\}\), that almost every
point of the manifold lies on a curve which is Wronskian
non-degenerate at that point. This provides the global fibering mechanism required for the measure theoretic application considered in this section and allows the resulting Gallagher theorem to be formulated on a bounded parameter domain without imposing compactness.

The smoothness requirements of the two fibering mechanisms also differ substantially. Translating the assumptions of \citep[Lemma A.1]{chow2026rationalpointshyperbolicregions} into our notation, the resulting order \(L\) may be as large as \(l^{m+1}\), with necessarily \(L\geq n\). Consequently, their reduction may require smoothness of order comparable to
\[
\max\{n,l^{m+1}\}.
\]
In contrast, Theorem~\ref{thm:globalfib} requires only
\[
N\geq\max\{n,l+1\}.
\]
In particular, the smoothness threshold can be significantly weaker when the manifold dimension \(m\) is large. This comparison concerns the fibering step only; the curve theorem \citep[Theorem~1.14]{chow2026rationalpointshyperbolicregions} is itself proved for $C^\infty$ curves.

Furthermore, Proposition~\ref{prop:smoothness-comparison} shows that, for the same collection of multi-indices witnessing non-degeneracy, our smoothness requirement never exceeds that arising from \citep[Lemma A.1]{chow2026rationalpointshyperbolicregions}.
\begin{prop}\label{prop:smoothness-comparison}
Let \(N=\max\{n,l+1\}\) be the smoothness requirement in Theorem~\ref{thm:globalfib}, and let \(L\) denote the order arising from \citep[Lemma A.1]{chow2026rationalpointshyperbolicregions} for the same collection of multi-indices witnessing non-degeneracy. Then
\[
N\le L.
\]
\end{prop}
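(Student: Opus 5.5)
The plan is to show separately that $L\ge n$ and $L\ge l+1$; then $N=\max\{n,l+1\}\le L$ follows at once. The first inequality is structural. The fibering lemma of \citep[Lemma~A.1]{chow2026rationalpointshyperbolicregions} produces curves in $\mathbb R^n$ that are $L$ non-degenerate. By \eqref{2}, the span of the first $L$ derivatives of such a curve must be all of $\mathbb R^n$. For a curve ($m=1$) the span of $\{\phi^{(k)}(t):1\le k\le L\}$ has dimension at most $L$, so $L\ge n$. This is also the constraint ``necessarily $L\ge n$'' already recorded in the comparison above.

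For the second inequality I will unpack how $L$ is defined in \citep[Lemma~A.1]{chow2026rationalpointshyperbolicregions} in our notation. Their curves are obtained by restricting $f$ to a polynomial (or monomial-type) curve $t\mapsto x_0+\sum_j v_j t^{k_j}$ in the parameter space. The witnessing multi-indices $\alpha_1,\dots,\alpha_n$ with $|\alpha_i|\le l$ are then converted into single-variable orders via a weight map of the same flavour as $e_d$ in Lemma~\ref{lem4}. In that construction, each $\alpha_i$ with $|\alpha_i|=l_i$ is sent to an order at least $l_i$, since every coordinate exponent is at least $1$, and generically strictly larger once $m\ge 2$. This is the source of the bound $L\le l^{m+1}$ quoted above.

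I then need a lower bound of the form $L\ge \max_i w(\alpha_i)$, where $w$ is their weight. In the worst case $m=1$ the weight is the identity, so $L$ equals the largest order actually used, which is $l$. I therefore have to verify that, when one passes to the curve, their construction requires at least one derivative beyond the top witnessing order. I expect this to hold either because they demand $L\ge l+1$ to control a Taylor remainder, or because the weight of a multi-index of length $l$ with $m\ge2$ strictly exceeds $l$. The case $m=1$ is separate: there $f$ is itself a curve, so $l\ge n$ is forced and $\max\{n,l+1\}=l+1$. In that case I would check directly that their $L$, which is at least $l$ and defined via an extra-derivative remainder, satisfies $L\ge l+1$.

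The main obstacle is the second step. It depends entirely on the precise definition of $L$ in \citep[Lemma~A.1]{chow2026rationalpointshyperbolicregions}, which I would need to transcribe carefully. The delicate point is the borderline case where some witnessing multi-index has length exactly $l$ and maps to an order exactly $l$. If their $L$ can in fact equal $l$ there, the claimed inequality would fail by one. My fallback in that situation is to appeal to the hypersurface refinement in Proposition~\ref{prop:hypersurface}, or to show that such a borderline configuration forces $n>l$, so that $N=n\le L$ holds anyway.
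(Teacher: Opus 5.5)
Your first step, $L\ge n$ because an $L$ non-degenerate curve in $\mathbb R^n$ needs its first $L$ derivatives to span $\mathbb R^n$, is exactly what the paper uses. The gap is in the second step. You never determine $L$, and your guess about it is wrong in precisely the way that produces the borderline case you worry about. In the comparison, $L$ is explicit: with $d=l+2$,
\[
L=\max_{1\le i\le n}e_d(\alpha_i),\qquad e_d(\alpha)=\sum_{j=1}^m\alpha_j(d^{j-1}+d^m),
\]
which is the same weight map as in Lemma~\ref{lem4}. The observation you are missing is that every coordinate weight $d^{j-1}+d^m$ contains the term $d^m$. Hence every non-zero multi-index has weight at least $1+d^m$. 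Taking $\alpha_{i_0}$ with $|\alpha_{i_0}|=r:=\max_i|\alpha_i|\ge 1$ gives
\[
L\ge e_d(\alpha_{i_0})\ge r(1+d^m)\ge 1+d^m\ge 1+d=l+3>l+1.
\]
This holds uniformly in $m\ge 1$, with no case distinction and no borderline case. Together with $L\ge n$, it gives $N\le L$.

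In particular, your claim that for $m=1$ ``the weight is the identity'' is false here, since $e_d(\alpha)=\alpha_1(1+d)=\alpha_1(l+3)$. Your fallbacks would not rescue the argument if $L=l$ could actually occur. Proposition~\ref{prop:hypersurface} lowers this paper's own threshold under a rank condition and says nothing about the value of $L$; moreover, the statement fixes $N=\max\{n,l+1\}$. And for $m=1$ you yourself note that $l\ge n$, so no configuration can force $n>l$. As written, the second half of your proposal is a conjecture about how $L$ is defined. Once the definition is transcribed, $L\ge l+1$ follows from the one-line estimate above.
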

\begin{proof}
Let $\alpha_1,\dots,\alpha_n\in\mathbb{Z}_{\ge0}^m$ with
\[
\operatorname{span}
\left\{
\partial_{\alpha_1}f(0),\dots,
\partial_{\alpha_n}f(0)
\right\}
=\mathbb{R}^n,
\]
where $1\le |\alpha_i|\le l$. Set
\[
r:=\max_{1\le i\le n}|\alpha_i|\le l,
\quad
d=l+2.
\]
By definition,
\[
L=\max_{1\leq i\leq n} e_d(\alpha_i)\quad\text{where}\quad
e_d(\alpha)
=
\sum_{j=1}^{m}\alpha_j(d^{j-1}+d^m).
\]
Choose $\alpha_{i_0}$ with $|\alpha_{i_0}|=r$. Since $d^{j-1}\ge1$,
\[
L
\ge
e_d(\alpha_{i_0})
\ge
r(1+d^m).
\]
As $r\ge1$,
\[
L
\ge
1+d^m
\ge
1+d
=
l+3
>
l+1.
\]
Moreover, by construction, the resulting curve is \(L\) non-degenerate in \(\mathbb R^n\), and therefore necessarily \(L\ge n\). Hence
\[
L
\ge
\max\{n,l+1\}
=
N,
\]
as required.
\end{proof}
We begin by recalling the notion of multiplicative approximation. We say $x\in \mathbb{R}^n$ is $\psi$-multiplicatively approximable if 
\[
\prod_{i\leq n}\Vert qx_i\Vert<\psi(q)
\]
holds for infinitely many $q\in \mathbb{N}$. Let $\mathcal{W}_n^\times (\psi)$ be the set of $\psi$-multiplicatively approximable vectors in $\mathbb{R}^n$. A Borel measure \(\mu\) on \(\mathbb R^n\) is convergent Gallagher if, for any non-increasing function $\psi:\mathbb{N}\to [0,1)$,
\[
\sum_{q=1}^\infty \psi(q)(\log q)^{n-1}<\infty\quad\Rightarrow\quad \mu(\mathcal{W}_n^\times(\psi))=0.
\]
For an integer $1\leq m\leq n$, let $V\subset\mathbb{R}^m$ be a bounded
open set. As usual, the primary object of interest is the manifold
\[
\mathcal{M}=\lbrace f(x):\,x\in V\rbrace,
\]
parametrised by a sufficiently smooth map
$f=(f_1,\dots,f_n):V\to\mathbb{R}^n$. The manifold theorem is proved by reduction to curves. Let $I\subset \mathbb{R}$ be a closed interval contained in the open interval $U\subset \mathbb{R}$. Consider the curve
\begin{align}\label{40}
\mathcal{C}=\lbrace f(x):\, x\in I\rbrace,
\end{align}
where $f=(f_1,\dots,f_n): U\to \mathbb{R}^n$ is of class $C^n$. The reduction relies on the following curve theorem, which is the case of curves of \citep[Theorem~1.2]{beresnevich2026weighteddiophantineapproximationmanifolds}.
\begin{thm}\label{chowc}
If the curve $\mathcal{C}$ from (\ref{40}) is Wronskian non-degenerate, then the natural Lebesgue measure on $\mathcal{C}$ is convergent Gallagher.
\end{thm}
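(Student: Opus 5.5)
Theorem~\ref{chowc} is not proved from scratch here. The plan is to obtain it as the one-dimensional specialisation of \citep[Theorem~1.2]{beresnevich2026weighteddiophantineapproximationmanifolds}, checking that its hypotheses are met for the curve \eqref{40}. So the work is a reduction, not a new Diophantine argument.

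\textbf{Step 1: make the hypothesis local.} First I would fix what ``Wronskian non-degenerate'' means for $\mathcal C$. I take it to mean that $W(f_1',\dots,f_n')(x)\neq0$ for Lebesgue almost every $x\in I$. The zero set
\[
Z:=\{x\in I:W(f_1',\dots,f_n')(x)=0\}
\]
is closed, by continuity of the Wronskian for $f\in C^n$, and it is null. Hence $I\setminus Z$ is relatively open in $I$. It is therefore a countable union of intervals, and each is a countable union of closed subintervals $I_k$ on which $|W|$ is bounded below by a positive constant.

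\textbf{Step 2: apply the cited theorem on each piece.} On each $I_k$, the nonvanishing of the Wronskian means that $f'(x),\dots,f^{(n)}(x)$ span $\mathbb R^n$. So $f$ is $n$ non-degenerate at every point of $I_k$, and this is exactly the non-degeneracy assumption in \citep[Theorem~1.2]{beresnevich2026weighteddiophantineapproximationmanifolds} with $m=1$. The only care needed is to check two things against that source:
\begin{itemize}
\item[(i)] the smoothness $C^n$ suffices in the curve case;
\item[(ii)] the weighted multiplicative setting there reduces to the multiplicative set $\mathcal W_n^\times(\psi)$ under the normalisation used in the definition of the convergent Gallagher property above.
\end{itemize}
For (ii), I would take the unweighted or equal-weight instance of that theorem. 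It then gives that the pushforward of Lebesgue measure on each $I_k$ under $f$ assigns zero mass to $\mathcal W_n^\times(\psi)$ whenever $\sum_q\psi(q)(\log q)^{n-1}<\infty$.

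\textbf{Step 3: glue the pieces.} The natural measure on $\mathcal C$ is the pushforward of Lebesgue measure on $I$. The convergent Gallagher property passes to countable unions. Moreover $f(Z)$ carries zero natural measure, because $Z$ is null and the measure is a pushforward from $I$. Summing over $k$ therefore gives $\mu(\mathcal W_n^\times(\psi))=0$.

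\textbf{Main obstacle.} I expect the only delicate point to be the bookkeeping in Step~2, namely matching the precise hypotheses of \citep[Theorem~1.2]{beresnevich2026weighteddiophantineapproximationmanifolds} (regularity class, choice of weights, compactness of the parameter domain) to the present $C^n$ curve setting. If that theorem assumes a compact parameter set with non-degeneracy everywhere, the localisation of Step~1 is exactly what supplies it.
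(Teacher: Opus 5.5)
Your proposal matches the paper's treatment. The paper does not prove Theorem~\ref{chowc}; it cites it as the curve case of \citep[Theorem~1.2]{beresnevich2026weighteddiophantineapproximationmanifolds}, and your localisation to compact subintervals on which $|W|$ is bounded below is the same exhaustion the paper performs when it applies this result in the proof of Theorem~\ref{thmcgp}.
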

Combining Theorem~\ref{chowc} with the global fibering mechanism
underlying Theorem~\ref{thm:globalfib} gives a fibering proof of the
following result. It is a special case of \citep[Theorem~1.2]{beresnevich2026weighteddiophantineapproximationmanifolds}, which needs no smoothness beyond that implicit in non-degeneracy. We include the argument because it shows how the global fibering mechanism combines with a curve level result; the proof of Theorem~\ref{thm:twisted} follows the same structure.
\begin{thm}[Convergent Gallagher property]\label{thmcgp}
Let 
\[
\mathcal{M}=\lbrace f(x):x\in V\rbrace\subset \mathbb{R}^n,
\]
be a $C^N$ manifold of dimension $m\geq 1$, parametrised on a bounded domain $V\subset \mathbb{R}^m$. Assume that $f$ is $l$ non-degenerate at almost every $x\in V$ and the smoothness condition \eqref{45} is satisfied. Then the natural Lebesgue measure on $\mathcal{M}$ is convergent Gallagher.
\end{thm}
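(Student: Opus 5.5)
We argue by contradiction, using the same density-point mechanism as in the proof of Theorem~\ref{thm:globalfib}, but with the exceptional set taken to be the set where the Gallagher conclusion fails instead of the set of non-fibred points. Fix a non-increasing $\psi$ with $\sum_q\psi(q)(\log q)^{n-1}<\infty$. Let
\[
E:=\{x\in V: f(x)\in\mathcal W_n^\times(\psi)\}.
\]
It suffices to show $|E|=0$. Indeed, the natural measure on $\mathcal M$ is the pushforward of Lebesgue measure on $V$ with a locally bounded density, since $f$ is $C^1$. The set $E$ is Borel, because $\mathcal W_n^\times(\psi)$ is a limsup of open sets and $f$ is continuous. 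Assume $|E|>0$. By the Jessen--Marcinkiewicz--Zygmund theorem we may choose $x_0\in E$ that is a strong density point of $E$ and at which $f$ is $l$ non-degenerate. After translating $x_0$ to $0$, we run the local construction of Theorem~\ref{thmfib} at $x_0$. We fix $d\ge d_0$ and a compact $U_0\subset\{1\}\times(\mathbb R^\times)^{m-1}$ of positive measure, as in Lemma~\ref{ppro}. If $m=1$, we simply take the curve $f$ itself; $l$ non-degeneracy then gives Wronskian non-degeneracy off a countable set by \citep[Proposition~2.13]{beresnevich2023khintchine}, and Theorem~\ref{chowc} applies directly on small intervals.

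The key step is to show that $\Gamma(G(t_*))\cap E$ is Lebesgue null for every small $t_*$, where $G(t_*)$ and $\Gamma$ are as in Lemma~\ref{ppro}. For each $u\in U_0$, let $E_u=\{t\in(0,t_*):W_u(t)\neq0\}$. This set is open, so it is a countable union of intervals on whose compact subintervals the fibre $\phi_u=f\circ\gamma_u$ is Wronskian non-degenerate and of class $C^N\subset C^n$. By Theorem~\ref{chowc}, the set $\{t\in E_u:\phi_u(t)\in\mathcal W_n^\times(\psi)\}$ is then one-dimensionally null. The set $\{(t,u)\in G(t_*):\Gamma(t,u)\in E\}$ is measurable, so Fubini shows it is null in $(t,u)$. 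Finally, $\Gamma$ is a $C^1$ injective map for $t>0$, so by the area formula with the Jacobian \eqref{82} its image $\Gamma(G(t_*))\cap E$ is Lebesgue null in $\mathbb R^m$.

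With this in hand, \eqref{87} gives $|\Gamma(G(t_*))|\ge c|R(t_*)|$ with $c$ independent of $t_*$, and $\Gamma(G(t_*))\subset R(t_*)$. Hence
\[
|E\cap(x_0+R(t_*))|\le(1-c)|R(t_*)|
\]
for all small $t_*$, which contradicts the strong density of $E$ at $x_0$ exactly as in \eqref{88}--\eqref{90}. Therefore $|E|=0$. The main points needing care are measurability, and the fact that Theorem~\ref{chowc} is stated for a curve on a closed interval that is Wronskian non-degenerate throughout. We handle the latter by exhausting the open set $E_u$ by countably many closed intervals on which $W_u\neq0$; the Wronskian need not be nonzero up to the endpoints of $E_u$, and this causes no problem because the exhaustion loses nothing in measure. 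The constants in Lemma~\ref{ppro} depend on $x_0$, but this is harmless since only one point $x_0$ is used.
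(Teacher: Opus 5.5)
Your proposal is correct and follows essentially the same route as the paper's proof. You take a strong density point $x_0$ of $\mathcal E_\psi$ at which $f$ is $l$ non-degenerate. You apply Theorem~\ref{chowc} on a countable exhaustion of $\{t: W_u(t)\neq0\}$ by compact intervals, and use Fubini and the area formula to show that $\mathcal E_\psi$ is null on $x_0+\Gamma(G(t_*))$. The proportion bound \eqref{87} then contradicts strong density. Your explicit treatment of the Borel measurability of $E$ and of the case $m=1$ fills in details the paper leaves implicit, but does not change the argument.
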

\begin{proof}
Let $\psi:\mathbb{N}\to[0,1)$ be non-increasing and suppose
\[
\sum_{q\geq1}\psi(q)(\log q)^{n-1}<\infty.
\]
Set
\[
\mathcal E_\psi
:=
\{x\in V:f(x)\in\mathcal W_n^\times(\psi)\}.
\]
Suppose, for a contradiction, that $|\mathcal E_\psi|>0$. Since $f$
is $l$ non-degenerate almost everywhere, we may choose
$x_0\in\mathcal E_\psi$ which is both a point of $l$ non-degeneracy
of $f$ and a strong density point of $\mathcal E_\psi$.

Applying the global fibering construction from the proof of
Theorem~\ref{thm:globalfib} at $x_0$, that is, to the translated map $x\mapsto f(x_0+x)$, let $U_0$, $\Gamma(t,u)$ and $G(t_*)$ be as defined in Lemma~\ref{ppro}, and let $R(t_*)$ be as defined in \eqref{85}; accordingly, $W_u$ below denotes the Wronskian of $t\mapsto f(x_0+\gamma_u(t))$. For every sufficiently small $t_*>0$ we obtain
\begin{align}\label{91}
x_0+\Gamma(G(t_*))\subset x_0+R(t_*)
\end{align}
and recall from \eqref{87} that
\begin{align}\label{92}
|\Gamma(G(t_*))|\geq c|R(t_*)|
\end{align}
for some $c>0$ independent of $t_*$.

For each $u\in U_0$, let
\[
J_u^*(t_*):=\{t\in(0,t_*):W_u(t)\neq0\}.
\]
Since $W_u$ is continuous, $J_u^*(t_*)$ is open and hence is a
countable union of disjoint open intervals. On every compact
subinterval of each component, the corresponding curve
\[
\phi_u(t):=f(x_0+\gamma_u(t))
\]
is Wronskian non-degenerate. Since Theorem~\ref{chowc} is stated for closed intervals, we exhaust each component of $J_u^*(t_*)$ by a countable increasing union of compact subintervals. Applying Theorem~\ref{chowc} on each such
subinterval, and using countable sub-additivity, gives
\[
\left|
\left\{
t\in J_u^*(t_*):
\phi_u(t)\in\mathcal W_n^\times(\psi)
\right\}
\right|=0
\]
for every $u\in U_0$.

Define
\[
Z(t_*):=
\left\{
(t,u)\in G(t_*):
f(x_0+\gamma_u(t))\in\mathcal W_n^\times(\psi)
\right\}.
\]
By Fubini's theorem,
\[
|Z(t_*)|=0.
\]
Applying the area formula \citep[Theorem~3.8]{evans2025measure} for $\Gamma$, together with its injectivity
for $t>0$, gives
\[
|\Gamma(Z(t_*))|
=
\int_{Z(t_*)}
|\det D\Gamma(t,u)|\,dt\,du
=0.
\]
Since translation by $x_0$ preserves Lebesgue measure, $|x_0+\Gamma(Z(t_*))|=0$. Moreover, by the definition of $\mathcal E_\psi$,
\[
x_0+\Gamma(Z(t_*))
=
\mathcal E_\psi
\cap
\bigl(x_0+\Gamma(G(t_*))\bigr).
\]
Therefore
\[
\left|
\mathcal E_\psi
\cap
\bigl(x_0+\Gamma(G(t_*))\bigr)
\right|
=0.
\]
Consequently, by \eqref{91} and \eqref{92}, we have
\begin{align*}
\left|
\mathcal E_\psi\cap
\bigl(x_0+R(t_*)\bigr)
\right|
&=
\left|
\mathcal E_\psi\cap
\Bigl(
\bigl(x_0+R(t_*)\bigr)
\setminus
\bigl(x_0+\Gamma(G(t_*))\bigr)
\Bigr)
\right|\\
&\leq
\left|
\bigl(x_0+R(t_*)\bigr)
\setminus
\bigl(x_0+\Gamma(G(t_*))\bigr)
\right|\\
&=
|R(t_*)|-|\Gamma(G(t_*))|\\
&\leq
(1-c)|R(t_*)|.
\end{align*}
Thus
\[
\frac{
\left|\mathcal E_\psi\cap\bigl(x_0+R(t_*)\bigr)\right|
}{
|R(t_*)|
}
\leq 1-c
\]
for every sufficiently small $t_*>0$, contradicting the fact that
$x_0$ is a strong density point of $\mathcal E_\psi$. Hence
\[
|\mathcal E_\psi|=0.
\]

Since $f$ is a local parametrisation of $\mathcal M$, it follows that
\[
\mu_m\bigl(\mathcal M\cap\mathcal W_n^\times(\psi)\bigr)=0.
\]
Therefore the natural Lebesgue measure on $\mathcal M$ is convergent
Gallagher.
\end{proof}
\begin{rem}
The proof above illustrates how the global fibering mechanism can be
combined with a curve level Diophantine result. In the proof of
Theorem~\ref{thm:globalfib}, the exceptional set misses the good
fibering region directly from its definition. Here this does not
follow from the definition of $\mathcal E_\psi$. Instead,
Theorem~\ref{chowc}, together with Fubini's theorem and the area
formula, shows that $\mathcal E_\psi$ has measure zero in the good
region. This reduces the argument to the same density mechanism:
at every sufficiently small scale the exceptional set misses a fixed
positive proportion of the parameter space, and the strong density
theorem then gives the global measure-zero conclusion.
\end{rem}
\subsection{Twisted Diophantine approximation on manifolds}
We conclude with an application of Theorem~\ref{thm:globalfib} to twisted Diophantine approximation on manifolds. Fix an $m\times n$ real matrix $\alpha$ and an approximation function,
that is, a non-increasing continuous map $\psi:\mathbb{R}_+\to\mathbb{R}_+$. A vector $\beta\in\mathbb R^m$ is called $(\psi,\alpha)$-approximable if there exist infinitely many pairs $(p,q)\in\mathbb Z^m\times(\mathbb Z^n\setminus\{0\})$ such that
\[
|\alpha q+\beta+p|\leq\psi(|q|).
\]
The set of such vectors is denoted by $\mathcal T_\psi(\alpha)$. Twisted Diophantine approximation \citep{beresnevich2008classical,bugeaud2010shrinking} concerns the metric properties of $\mathcal T_\psi(\alpha)$, where $\alpha$ is fixed while $\beta$ is allowed to vary, possibly subject to the restriction that $\beta$ lies on a manifold.

For a fixed matrix $\alpha\in\mathbb R^{m\times n}$, a vector
$\beta\in\mathbb R^m$ is called badly $\alpha$-approximable if there
exists a constant $c>0$ such that
\[
|\alpha q+\beta+p|
\geq
c|q|^{-n/m}
\]
for every $(p,q)\in\mathbb Z^m\times(\mathbb Z^n\setminus\{0\})$. Equivalently, $\beta$ does not belong to $\mathcal T_\psi(\alpha)$ for the approximation function $\psi(q)=cq^{-n/m}$. We denote the set of badly $\alpha$-approximable vectors by $\mathcal{T}\mathbf{ad}(\alpha)$. The set $\mathcal{T}\mathbf{ad}(\alpha)$ is known to have full Hausdorff dimension \citep{broderick2013badly,bugeaud2010shrinking}, while its Lebesgue measure depends on the Diophantine properties of $\alpha$.

In \citep{beresnevich2026twisteddiophantineapproximationmanifolds},
Beresnevich, Simmons and Velani established Khintchine type convergence and divergence results for $\mathcal T_\psi(\alpha)$, together with a zero measure result for $\mathcal{T}\mathbf{ad}(\alpha)$, on non-degenerate analytic manifolds. As observed in \citep[Remarks~1.3 and~2.1]{beresnevich2026twisteddiophantineapproximationmanifolds}, analyticity is used only in the passage from curves to manifolds via the analytic fibering lemma \ref{lem2} and Fubini’s theorem. Since Theorem~\ref{thm:globalfib} provides the required global fibering mechanism, following the proof structure of Theorem~\ref{thmcgp} yields the following extension. Here, $m$ denotes the ambient dimension in the notation of \citep{beresnevich2026twisteddiophantineapproximationmanifolds}.
\begin{thm}\label{thm:twisted}
Let $m\ge2$. The conclusions of
\citep[Theorems~2--4]{beresnevich2026twisteddiophantineapproximationmanifolds}
remain valid if every occurrence of a non-degenerate analytic manifold is replaced by a $C^N$ non-degenerate manifold satisfying the hypotheses of Theorem~\ref{thm:globalfib}.
\end{thm}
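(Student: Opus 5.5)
The plan is to follow the proof of Theorem~\ref{thmcgp} verbatim in structure, replacing the Gallagher curve theorem by the curve-level results underlying \citep[Theorems~2--4]{beresnevich2026twisteddiophantineapproximationmanifolds}. As \citep[Remarks~1.3 and~2.1]{beresnevich2026twisteddiophantineapproximationmanifolds} indicate, those results are first established for Wronskian non-degenerate curves, and analyticity enters only through Lemma~\ref{lem2} and Fubini's theorem. I would separate the statements into two types. The first type consists of \emph{measure-zero} conclusions: the convergence case and the zero measure of $\mathcal T\mathbf{ad}(\alpha)$ on the manifold. The second type consists of \emph{full-measure} conclusions: the divergence case. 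The second type reduces to the first by passing to complements, since a full-measure statement for $\mathcal T_\psi(\alpha)$ is equivalent to a measure-zero statement for the complement $\mathcal E$ of the preimage of $\mathcal T_\psi(\alpha)$ in $V$.

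In each case, let $\mathcal E\subset V$ be the parameter set whose measure should vanish. I would first check measurability: $\mathcal T_\psi(\alpha)$ is a $\limsup$ of closed sets, hence Borel, and so is $\mathcal T\mathbf{ad}(\alpha)$, a countable union over rational $c$ of closed sets. Then I assume $|\mathcal E|>0$ and pick $x_0\in\mathcal E$ that is both a point of $l$ non-degeneracy and a strong density point, using the Jessen--Marcinkiewicz--Zygmund theorem as in the proof of Theorem~\ref{thm:globalfib}. Next, I apply the construction of Lemmas~\ref{tpro} and~\ref{ppro} to $x\mapsto f(x_0+x)$. This gives $G(t_*)$ and $\Gamma$ with $x_0+\Gamma(G(t_*))\subset x_0+R(t_*)$ and $|\Gamma(G(t_*))|\ge c|R(t_*)|$, where $c$ does not depend on $t_*$. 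For each $u\in U_0$, the set $J_u^*(t_*)=\{t:W_u(t)\neq0\}$ is open. I exhaust each of its components by compact subintervals, on which $\phi_u$ is Wronskian non-degenerate. On each such subinterval, the curve theorem from \citep{beresnevich2026twisteddiophantineapproximationmanifolds} shows that $\{t:\phi_u(t)\text{ is in the exceptional set}\}$ is null. Countable subadditivity then gives nullity on all of $J_u^*(t_*)$. Fubini's theorem and the area formula for the injective map $\Gamma$ give $|\mathcal E\cap(x_0+\Gamma(G(t_*)))|=0$. Hence $|\mathcal E\cap(x_0+R(t_*))|\le(1-c)|R(t_*)|$ for all small $t_*$, which contradicts strong density.

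The main obstacle I expect is verifying that the curve-level inputs in \citep{beresnevich2026twisteddiophantineapproximationmanifolds} are genuinely stated, or provable, for $C^n$ Wronskian non-degenerate curves on compact intervals with no analyticity. Here I would rely on their Remarks~1.3 and~2.1, and on the fact that the hypotheses of Theorem~\ref{thm:globalfib} give $N\ge n$, so each fiber $\phi_u$ is $C^n$. A second point to check is whether the divergence case of their Theorem~3 uses only a full-measure conclusion on curves. If it instead requires a quantitative ubiquity statement on the manifold, I would still reduce it to a full-measure statement and argue via complements as above. The restriction $m\ge2$ only ensures that genuine fibering is needed: the ambient dimension in their notation is $m$, and the case $m=1$ is trivial or is already a curve statement. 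Finally, passing from parameter sets to the manifold uses the local Lipschitz property of $f$, exactly as at the end of the proof of Theorem~\ref{thm:globalfib}.
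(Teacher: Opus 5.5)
Your proposal is correct and takes essentially the route the paper indicates: rerun the strong-density-point argument from the proof of Theorem~\ref{thmcgp}, with the curve-level results of Beresnevich--Simmons--Velani in place of Theorem~\ref{chowc}. The paper states this in a single sentence, and your reduction of the divergence (full-measure) statement to a null-set statement for the complement, together with your measurability check, are the details needed to turn that sentence into a proof.
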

\section{Final remarks}
In this final section, we briefly discuss several future directions of the fibering mechanism developed in this paper. One natural direction concerns Schmidt type games and winning properties. In \citep{beresnevich2020winningpropertybadlyapproximable}, Beresnevich, Nesharim and Yang established an absolute winning result for weighted badly approximable points on non-degenerate analytic curves. The corresponding problem for higher-dimensional manifolds remains open. The fibering mechanism presented here provides a natural route towards this question. However, unlike Hausdorff dimension and measure theoretic statements, winning properties cannot currently be recovered from fibering via classical slicing arguments such as Marstrand's slicing lemma~\ref{lem1} or Fubini's theorem. This suggests that an appropriate lifting principle for winning sets is required. A possible approach is to work in the parameter space, where product theorems for winning sets can be studied, and investigate whether the winning property can be preserved under suitable parametrisations of manifolds. 

Another related direction is to remove the analyticity assumption from the results of \citep{beresnevich2020winningpropertybadlyapproximable}. In that work, analyticity enters through the quantitative non-divergence estimates of Kleinbock and Margulis \citep{kleinbock1998flowshomogeneousspacesdiophantine}. Eliminating this assumption appears to require new ideas. One possible approach is to perturb a sufficiently smooth manifold to a nearby analytic manifold and carry out the argument entirely within the analytic category. The uniform perturbative non-degeneracy established here may provide a useful framework for investigating such an approach.

A second direction concerns weakly non-degenerate manifolds; see
\citep{BERESNEVICH20103064}. In the planar setting considered in
\citep{BERESNEVICH20103064}, weak non-degeneracy is formulated through
local $C^0$ approximation by $C^2$ functions whose curvature is bounded
above and below by fixed positive constants. In this way, the
approximating curves provide a surrogate for the derivative information
ordinarily used to rule out degeneracy, while retaining quantitative
geometric rigidity. Although the smoothness assumptions required for
the fibering theorem are essentially optimal, the arithmetic
applications themselves may admit greater flexibility. The uniform
Wronskian lower bounds obtained here provide a related form of
geometric rigidity under perturbation, and may therefore be useful in
developing higher-dimensional analogues of weak non-degeneracy.
\section*{Acknowledgements}
\addcontentsline{toc}{section}{Acknowledgements}
The author thanks Victor Beresnevich and Sanju Velani for their invaluable guidance, insightful advice and many helpful discussions. The author is particularly grateful to Victor Beresnevich for identifying the ``chicken-and-egg'' issue during the initial development of the fibering approach, which ultimately motivated the quantitative framework presented in this paper.

\nocite{*}
\addcontentsline{toc}{section}{References}

\bibliography{Biblio}
\typeout{get arXiv to do 4 passes: Label(s) may have changed. Rerun}
\end{document}